\newtheorem{Theorem}{Theorem}[section]
\newtheorem{Lemma}{Lemma}[section]
\newtheorem{Assumption-Notation}[Theorem]{Assumption-Notation}
\newtheorem{Proposition}{Proposition}[section]
\newtheorem{Remark}{Remark}[section]
\newtheorem{Corollary}{Corollary}[section]
\newtheorem{Example}{Example}[section]
\numberwithin{Remark}{section}
\numberwithin{Lemma}{section}
\numberwithin{Theorem}{section}
\numberwithin{Proposition}{section}
\numberwithin{Corollary}{section}
\numberwithin{equation}{section}
\begin{document}

\title{Occupation times of general L\'evy processes}

\author{Lan Wu \and Jiang Zhou \and Shuang Yu}
\institute{Lan Wu \at
              School of Mathematical Sciences, Peking University, Beijing 100871, P.R.China \\
              \email{lwu@pku.edu.cn}                
           \and
           Jiang Zhou \at
             School of Mathematical Sciences, Peking University, Beijing 100871, P.R.China \\
              Tel.: +86 18810515809 \\
              \email{1101110056@pku.edu.cn}
              \and
              Shuang Yu \at
              School of Mathematical Sciences, Peking University, Beijing 100871, P.R.China \\
              \email{1101110054@pku.edu.cn}
}

\date{Received: date / Accepted: date}

\maketitle

\begin{abstract}
For an arbitrary L\'evy process $X$ which is not a compound Poisson process,
we are interested in its occupation times. We use a quite novel and useful
approach to derive  formulas for the Laplace transform of the joint distribution
of $X$ and its occupation times. Our formulas are compact, and more importantly,
the forms of the formulas clearly demonstrate the essential quantities for the
calculation of occupation times of $X$. It is believed that our results are
important not only for the study of stochastic processes, but also for financial applications.
\keywords{ Occupation times \and L\'evy processes \and  Laplace transform \and
Infinitely divisible distribution \and Strong Markov property \and Continuity theorem}
\subclass{MSC 60}
\end{abstract}

\section{Introduction}
\label{sec:1}
The investigation of occupation times of stochastic processes is an interesting
and historic question. In $1939$, Paul L\'evy derived an interesting and useful
result:
\begin{equation}
\mathbb P\left(\int_{0}^{t}\textbf{1}_{\{W_u \geq 0 \}}du \in ds\right)
=\frac{ds}{\pi\sqrt{s(t-s)}},
\ \ 0 < s < t,
\end{equation}
where $(W_t)_{t\geq 0}$ is a standard Brownian motion and $\textbf{1}_A$ is the
indicator function of a set $A$; see L\'evy [13] for the details. After that, the
investigation on occupation times of L\'evy processes (in particular spectrally
negative L\'evy processes) has made much great progress. For example, the Laplace
transform of $\int_{0}^{\infty}\textbf{1}_{\{X_t < 0\}}dt$ and the joint Laplace
transform of $\tau_0^-$ and $\int_{0}^{\tau_0^-}\textbf{1}_{\{a < X_t < b \}}dt$
have been derived, where $X=(X_t)_{t\geq 0}$ is a spectrally negative L\'evy process;
$\tau_0^-$ is the first passage time of $X$ and $0\leq a \leq b$.  The interested
readers are referred to [12,15] for more details.

There are many papers considering the joint distribution of a L\'evy process and
its occupation times. For instance, for  a spectrally negative L\'evy process $X$,
the Laplace transform of
$\mathbb E_x\left[e^{-p\int_{0}^{t}\textbf{1}_{\{a < X_s < b \}}ds}
\textbf{1}_{\{X_t \in dy\}}\right]$ with respect to $t$ has been considered in [11].
Recently, Wu and Zhou [18] studied a similar problem, where the process $X$
is assumed to be a hyper-exponential jump diffusion process. Here, we want to mention
that Cai et al. [5] have derived formulas for
\[
\int_{0}^{\infty}e^{-(a+r)t}\mathbb E_x\left[e^{-\rho \int_{0}^{t}\textbf{1}_{\{X_s \leq b\}}ds
+\gamma X_t}\right]dt,\]
where $X$ is a double exponential jump diffusion process.

The above mentioned papers can be classified into two categories according to
the assumption on the process $X$. Some papers assume that the process $X$ is a
spectrally negative L\'evy process (e.g., [11,15]), the others allow the process $X$
to have two-sided jumps but pose a limitation on its jumps (the jumps of $X$ follow
exponential or hyper-exponential distributions). These two categories both have some
drawbacks. For the first category, the results in those papers are written in terms
of q-scale functions, which are associated to spectrally negative L\'evy processes;
thus it is very difficult to extend their results and approaches to the case that the
process $X$ has both positive and negative jumps. For the second one, the derivation
in these papers are heavily dependent on the assumption of exponential-type jump
distributions; therefore, it is likely that their approaches cannot be used to other
non-exponential-type jump distributions.

In this paper, for an arbitrary L\'evy process $X$ but not a compound Poisson
process, we explore the problem how to compute the following quantity:
\begin{equation}
\mathbb E_x\left[e^{-p \int_{0}^{t}\textbf{1}_{\{ X_s \leq  b \}}ds}\textbf{1}_{
\{X_t \in dy\}}\right],
\end{equation}
where $p$ is an appropriate constant. Formulas for the Laplace transform of (1.2)
with respect to $t$ are derived by applying a novel but straightforward approach.
Our method consists of two steps. First, we consider the case that $X$ is a jump
diffusion process with jumps having rational Laplace transform. Then  the result
is extended to a general L\'evy process via an approximation discussion. As in
[11,18], the result in this article has some financial applications. Specifically,
our results can be used in pricing occupation time derivatives. It is expected to obtain
some unusual and profound outcomes on the pricing of occupation time derivatives through
the application of the general result obtained in this paper. But here, we do not intend
to discuss this application further and leave it to future research.

The remainder of the paper is organized as follows. Some important preliminary
results related to L\'evy processes  are given in Section 2, and the main results
 are presented in Section 3. In the next two sections, details on the derivation are
 presented. Finally, we present two examples in Section 6 and draw some conclusions in Section 7.

\section{Some preliminary results}
\label{sec:2}
In this paper, we let $X=(X_t)_{t\geq 0}$ represent a general L\'evy process.
The law and the corresponding expectation of $X$ such that $X_0=x$ are denoted
respectively by $\mathbb P_x$ and $\mathbb E_x$. To simplify the notation, we
write $\mathbb P$ and $\mathbb E$ when $x = 0$. In addition, define
$\underline{X}_{T}:=\inf_{0\leq t \leq T}X_t$ and
$\overline{X}_{T}:=\sup_{0\leq t \leq T}X_t$
for $T \geq 0$, and  denote
\begin{equation}
\int_{a}^{b}:=\int_{(a,b)},  \ \ \int_{a^-}^{b}:=\int_{[a,b)} \ \
and \ \ \int_{a}^{b^+}:=\int_{(a,b]},
\end{equation}
where $a,b \in \mathbb R \bigcup \{-\infty,\infty\}$.

Throughout this article, we assume that $X$ is not a compound Poisson process;
 and the random variable  $e(q)$ for $q>0$, independent of $X$, is an exponential
 random variable with rate $q$; $Re(x)$ and $Im(x)$ represent the real part and
 the imaginary part of a complex number $x$, respectively.

The following lemma, which is taken from Proposition $15$ on page $30$ in
Bertion [3], is important for the derivation in the paper.

\begin{Lemma}
For any $q>0$ and $z \in \mathbb R$, we have $\mathbb P\left(X_{e(q)}=z\right)=0$,
 which leads to that $\mathbb P\left(X_t=z\right)=0$ for Lebesgue almost every $t>0$.
\end{Lemma}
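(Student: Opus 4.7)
The plan is to establish the first assertion by applying Wiener's atom-detection formula to the characteristic function of $X_{e(q)}$, and then to deduce the second via Fubini.

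Let $\psi$ denote the characteristic exponent of $X$, so that $\mathbb E[e^{i\xi X_t}] = e^{t\psi(\xi)}$. Conditioning on $e(q)$ yields $\hat{\mu}(\xi):=\mathbb E[e^{i\xi X_{e(q)}}] = q/(q-\psi(\xi))$. By Wiener's theorem,
\[
\sum_{z\in\mathbb R}\mathbb P(X_{e(q)}=z)^{2} \;=\; \lim_{T\to\infty}\frac{1}{2T}\int_{-T}^{T}|\hat{\mu}(\xi)|^{2}\,d\xi,
\]
so it suffices to show that $|\psi(\xi)|\to\infty$ as $|\xi|\to\infty$: then $|\hat{\mu}(\xi)|^{2}\to 0$ pointwise, and combined with the uniform bound $|\hat{\mu}(\xi)|\le 1$ (which follows from $Re\,\psi\le 0$) this forces the Ces\`aro limit to vanish.

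I would verify $|\psi(\xi)|\to\infty$ by a case analysis on the L\'evy-Khintchine triplet $(b,\sigma^{2},\nu)$. If $\sigma>0$, the Gaussian term gives $|Re\,\psi(\xi)|\ge \sigma^{2}\xi^{2}/2$. If $\sigma=0$ and $\nu(\mathbb R)=\infty$, splitting the integral $\int(1-\cos(\xi x))\nu(dx)$ at the scale $|x|\sim 1/|\xi|$ shows that $|Re\,\psi(\xi)|$ grows at least like $\nu(\{1/|\xi|<|x|<1\})$, which diverges because $\nu$ has infinite mass near the origin. If $\sigma=0$ and $\nu$ is finite, the assumption that $X$ is not a compound Poisson process forces the drift $b$ to be nonzero, and then $|Im\,\psi(\xi)|\sim |b\xi|$.

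Given $\mathbb P(X_{e(q)}=z)=0$ for every $z\in\mathbb R$ and every $q>0$, the second statement follows by Fubini: the identity $0 = \int_{0}^{\infty}qe^{-qt}\mathbb P(X_t=z)\,dt$ forces the nonnegative integrand $\mathbb P(X_t=z)$ to vanish for Lebesgue-almost every $t>0$. The main obstacle is the middle case of the previous step: quantifying the growth of $|Re\,\psi(\xi)|$ when $\nu$ is infinite requires some care with the truncation, although the argument is classical and the overall statement is cited by the authors as Proposition 15 of Bertoin [3].
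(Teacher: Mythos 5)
You never actually see the paper's argument here because there is none: the lemma is quoted verbatim from Proposition I.15 of Bertoin [3], so your proposal has to stand on its own, and it has a genuine gap at its central step. The claim that $|\psi(\xi)|\to\infty$ as $|\xi|\to\infty$ for every L\'evy process that is not compound Poisson is false, and it fails precisely in the hard case $\sigma=0$, $\nu(\mathbb R)=\infty$. Your asserted lower bound $|\mathrm{Re}\,\psi(\xi)|\gtrsim \nu(\{1/|\xi|<|x|<1\})$ does not follow from the truncation: on that range $1-\cos(\xi x)$ oscillates and can vanish identically on the support of $\nu$, and the splitting only yields a bound of order $\xi^{2}\int_{|x|\le 1/|\xi|}x^{2}\nu(dx)$, which need not diverge. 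Concretely, take $\nu=\sum_{n\ge 3}\delta_{2\pi 2^{-n}}$, no Gaussian part and zero drift; this is a driftless subordinator with infinite activity, hence not compound Poisson, and $\psi(\xi)=\int (e^{i\xi x}-1)\nu(dx)$. Along $\xi_k=2^{k}$ every atom with $n\le k$ gives $e^{i\xi_k x}-1=0$, while the terms with $n>k$ are bounded in total by $2\pi\sum_{m\ge1}2^{-m}=2\pi$, so $|\psi(\xi_k)|\le 2\pi$ although $\nu(\{1/\xi_k<x<1\})\approx k\to\infty$. Consequently $|\hat\mu(\xi_k)|\ge q/(q+2\pi)>0$, the pointwise decay of $\hat\mu$ fails, and your Ces\`aro argument collapses.

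The statement itself is of course true, and Wiener's formula together with the Fubini step (which is fine, as are your Gaussian and finite-$\nu$-with-drift cases) is a legitimate frame; but to make it work you would need the much weaker, averaged fact that the set of $\xi\in[-T,T]$ on which $|q-\psi(\xi)|$ stays bounded has Lebesgue measure $o(T)$, and this is exactly where the real work lies when $\nu$ is infinite — it is not implied by any pointwise estimate of the kind you propose, nor by a simple first-moment bound, since a large average does not preclude a positive-density set where $\mathrm{Re}(-\psi)$ is small. The standard ways to close this are the Hartman--Wintner/Doeblin continuity theorem for infinitely divisible laws with infinite L\'evy measure (Sato, Theorem 27.4, which argues through convolution factors and maximal atoms rather than through decay of the characteristic function), or the path-based argument behind Bertoin's Proposition I.15, which is what the authors simply cite. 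As written, your middle case is not a technical loose end "requiring care with the truncation" but a step that is false as stated.
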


\begin{Remark}
If $X$ is a compound Poisson process, then it is possible that
$\mathbb P\left(X_{e(q)}=z \right) > 0$ for some $z \in \mathbb R$, which will make
the discussion  more difficult.
\end{Remark}

The result in  Lemma 2.2 is well-known, one can see, e.g., Theorem 5 on page 160 in
Bertion [3].

\begin{Lemma}
For $Re(\xi) \leq 0$ and $q > 0$, we have
\begin{equation}
\begin{split}
\mathbb E\left[e^{\xi \overline {X}_{e(q)}}\right]
&=e^{\int_{0}^{\infty}\frac{1}{t}e^{-qt}\int_{0}^{\infty}(e^{\xi x}-1)
\mathbb P\left(X_t\in dx\right)dt}.
\end{split}
\end{equation}
\end{Lemma}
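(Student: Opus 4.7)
The plan is to derive the identity from the Wiener--Hopf factorization of $X$ sampled at the independent exponential time $e(q)$, combined with Fristedt's formula for the bivariate Laplace exponent of the ascending ladder process. Since the paper quotes this result directly from Bertoin [3, Theorem 5, p.\ 160], the argument ultimately consists of invoking that classical theorem, but it is instructive to outline the main steps.

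First I would associate to the reflected process $\overline{X}-X$ its ascending ladder process $(L^{-1},H)$, where $L$ is a suitably normalized local time at $0$ and $H$ is the corresponding ladder height process. Writing $\kappa(\alpha,\beta)$ for the bivariate Laplace exponent of $(L^{-1},H)$, the Wiener--Hopf factorization applied at $e(q)$ yields, for $Re(\xi)\le 0$,
\[
\mathbb E\bigl[e^{\xi \overline X_{e(q)}}\bigr] \;=\; \frac{\kappa(q,0)}{\kappa(q,-\xi)}.
\]
Next, I would invoke Fristedt's representation
\[
\kappa(\alpha,\beta) \;=\; k \exp\!\left(\int_0^\infty \int_{[0,\infty)} \bigl(e^{-t}-e^{-\alpha t-\beta x}\bigr)\,\frac{1}{t}\,\mathbb P(X_t\in dx)\,dt\right)
\]
for a suitable normalizing constant $k>0$. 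Substituting $(\alpha,\beta)=(q,0)$ and $(\alpha,\beta)=(q,-\xi)$ and taking the logarithm of the ratio, the $e^{-t}/t$ contributions cancel and one is left with
\[
\log \mathbb E\bigl[e^{\xi \overline X_{e(q)}}\bigr] \;=\; \int_0^\infty \frac{e^{-qt}}{t}\int_{[0,\infty)} (e^{\xi x}-1)\,\mathbb P(X_t\in dx)\,dt.
\]
Finally, Lemma 2.1 gives $\mathbb P(X_t=0)=0$ for Lebesgue almost every $t>0$, so the inner integration over $[0,\infty)$ agrees with the one over $(0,\infty)$, yielding the stated formula.

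The main obstacle is the validity of Fristedt's formula: it relies on fluctuation identities for the ascending ladder process and requires a careful choice of normalization of local time together with a priori integrability of the integrand near $t=0$. These technicalities are carried out in Bertoin's text, and the hypothesis that $X$ is not a compound Poisson process, through Lemma 2.1, is precisely what guarantees the regularity of the excursion structure needed to apply the formula. A secondary point is the extension from real $\xi\le 0$ to complex $\xi$ with $Re(\xi)\le 0$; this is routine by analytic continuation, since both sides are analytic in $\xi$ on $\{Re(\xi)<0\}$ and continuous up to the imaginary axis, so agreement on the negative real axis propagates.
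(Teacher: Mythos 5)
Your proposal is correct and matches the paper's treatment: the paper offers no independent argument but simply quotes this identity from Bertoin [3, Theorem 5, p.~160], and your sketch (Wiener--Hopf factorization at $e(q)$ combined with Fristedt's formula for $\kappa$, then cancellation of the $e^{-t}/t$ terms and analytic continuation in $\xi$) is precisely the standard derivation underlying that cited theorem. The only cosmetic remark is that passing from $\int_{[0,\infty)}$ to $\int_{(0,\infty)}$ needs no appeal to Lemma 2.1, since the integrand $e^{\xi x}-1$ vanishes at $x=0$.
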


The following theorem gives some simple but useful results, and its derivation
is straightforward.

\begin{Theorem}
(i) For $p,q>0$, there exists an infinitely divisible distribution
$G_1(x)$ on $[0, \infty)$ with the  Laplace transform
\begin{equation}
\int_{0^-}^{\infty} e^{-s x}dG_1(x)=\frac{\mathbb E\left[e^{-s \overline{X}_{e(q)}}\right]}
{\mathbb E\left[e^{-s \overline{X}_{e(p+q)}}\right]}
=e^{\int_{0}^{\infty}(e^{-s x}-1) \Pi_1(dx)}, \ \ s\geq 0,
\end{equation}
where $\Pi_1(dx)$ is the L\'evy measure and  is given by
\begin{equation}
\Pi_1(dx)=\int_{0}^{\infty}\frac{1}{t}e^{-qt}\left(1-e^{-p t}\right)
\mathbb P\left(X_t \in dx\right)dt, \ \ x>0.
\end{equation}

(ii) For $q > 0$ and $-q< p < 0$, there are two measures $G_{21}(x)$ and
$G_{22}(x)$ on $[0, \infty)$ such that
\begin{equation}
\int_{0^-}^{\infty}e^{-s x}dG_{21}(x)=\frac{1}{2}\left(e^{-\int_{0}^{\infty}
e^{-sx} \Pi_2(dx)}+e^{\int_{0}^{\infty}e^{-sx} \Pi_2(dx)}\right), \ \ s > 0,
\end{equation}
and
\begin{equation}
\int_{0^-}^{\infty}e^{-s x}dG_{22}(x)=\frac{1}{2}\left(e^{\int_{0}^{\infty}
e^{-sx} \Pi_2(dx)}-e^{-\int_{0}^{\infty}e^{-sx} \Pi_2(dx)}\right), \ \ s > 0,
\end{equation}
where
\begin{equation}
\Pi_2(dx)=\int_{0}^{\infty}\frac{1}{t}e^{-qt}\left(e^{-p t}-1\right)
\mathbb P\left(X_t \in dx\right)dt, \ \ x>0.
\end{equation}

Besides, it holds that
\begin{equation}
\begin{split}
&e^{\int_{0}^{\infty}\Pi_2(dx)}\int_{0^-}^{\infty}e^{-s x}d(G_{21}(x)-G_{22}(x))\\
&=\frac{\mathbb E\left[e^{-s \overline{X}_{e(q)}}\right]}
{\mathbb E\left[e^{-s \overline{X}_{e(p+q)}}\right]}=
e^{\int_{0}^{\infty}(1-e^{-s x}) \Pi_2(dx)}, \ \ s > 0,
\end{split}
\end{equation}
where
\begin{equation}
\begin{split}
&\int_{0}^{\infty}\Pi_2(dx)=\int_{0}^{\infty}\frac{1}{t}e^{-qt}
\left(e^{-pt}-1\right)\mathbb P\left(X_t >0\right)dt< \infty.
\end{split}
\end{equation}

(iii) $G_1(x)$, $G_{21}(x)$ and $G_{22}(x)$  are continuous on $(0,\infty)$.
\end{Theorem}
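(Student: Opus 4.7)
The plan is to bootstrap everything from Lemma 2.2 by taking the ratio of two Laplace exponents. Setting $\xi=-s$ in (2.2) yields $\log\mathbb{E}[e^{-s\overline X_{e(q)}}]=\int_0^\infty\tfrac{e^{-qt}}{t}\int_0^\infty(e^{-sx}-1)\,\mathbb P(X_t\in dx)\,dt$, and subtracting the corresponding expression with $q$ replaced by $p+q$ produces the $t$-kernel $\tfrac{e^{-qt}(1-e^{-pt})}{t}$. For $p>0$ this kernel is non-negative with $\int_0^\infty \tfrac{e^{-qt}(1-e^{-pt})}{t}dt=\log((q+p)/q)<\infty$ by Frullani, so together with the uniform bound $|e^{-sx}-1|\le 1$ the double integral is absolutely convergent, and Fubini gives
\[
\log\frac{\mathbb E[e^{-s\overline X_{e(q)}}]}{\mathbb E[e^{-s\overline X_{e(p+q)}}]}=\int_0^\infty(e^{-sx}-1)\,\Pi_1(dx)
\]
with $\Pi_1$ as in (2.4). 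The same bound shows $\Pi_1((0,\infty))\le\log((q+p)/q)<\infty$, so $\Pi_1$ is a finite Lévy measure and exponentiating term by term realizes $G_1$ as the compound Poisson probability measure $e^{-\Pi_1((0,\infty))}\sum_{n\ge 0}\Pi_1^{*n}/n!$ on $[0,\infty)$, which is automatically infinitely divisible.

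For (ii), the same calculation carried out for $-q<p<0$ flips the sign of the kernel and gives $\log\bigl(\mathbb E[e^{-s\overline X_{e(q)}}]/\mathbb E[e^{-s\overline X_{e(p+q)}}]\bigr)=\int_0^\infty(1-e^{-sx})\,\Pi_2(dx)$, with the Frullani estimate $\Pi_2((0,\infty))\le\log(q/(q+p))<\infty$; this yields (2.9) as well as the right-hand equality in (2.8). To exhibit $G_{21}$ and $G_{22}$ explicitly I would define them directly via the $\cosh$ and $\sinh$ power series,
\[
G_{21}:=\sum_{n=0}^\infty\frac{\Pi_2^{*2n}}{(2n)!},\qquad G_{22}:=\sum_{n=0}^\infty\frac{\Pi_2^{*(2n+1)}}{(2n+1)!},
\]
with the convention $\Pi_2^{*0}=\delta_0$. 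Because $\Pi_2$ is finite, each series converges in total variation to a finite non-negative measure on $[0,\infty)$, and interchanging sum and Laplace integral recovers (2.5) and (2.6); the left-hand equality in (2.8) is then just $\cosh(y)-\sinh(y)=e^{-y}$.

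For (iii), atomlessness is immediate from Lemma 2.1: for every $z>0$ the inner factor $\mathbb P(X_t=z)$ vanishes for Lebesgue-a.e.\ $t$, hence $\Pi_1(\{z\})=\Pi_2(\{z\})=0$. Since convolution with an atomless measure produces an atomless measure, each $\Pi_i^{*n}$ with $n\ge 1$ is atomless on $(0,\infty)$, and the explicit series representations above show that the only possible atom of $G_1$, $G_{21}$, or $G_{22}$ sits at $0$ (from the $n=0$ term $\delta_0$), giving continuity on $(0,\infty)$. The main obstacle is not analytic depth but careful bookkeeping: one has to track the sign of the kernel $\tfrac{e^{-qt}(1-e^{-pt})}{t}$ between the regimes $p>0$ and $-q<p<0$, license each Fubini interchange by the appropriate Frullani bound, and verify convergence of the $\cosh/\sinh$ series even at $s=0$, which is exactly what the finiteness of $\Pi_2$ in (2.9) provides.
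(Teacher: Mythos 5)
Your proposal is correct and follows essentially the same route as the paper: take the ratio of the exponents from Lemma 2.2, check finiteness of $\Pi_1,\Pi_2$ (the paper's (2.11) and (2.9), your Frullani bounds), build $G_{21},G_{22}$ from the $\cosh/\sinh$ series of $\Pi_2^{*n}$, and get continuity from the atomlessness of $\Pi_1,\Pi_2$ via Lemma 2.1. The only cosmetic difference is that you realize $G_1$ directly as the compound Poisson measure $e^{-\Pi_1(0,\infty)}\sum_{n\ge 0}\Pi_1^{*n}/n!$ instead of citing the L\'evy--Khintchine theorem, which is precisely the expansion the paper itself uses in part (iii).
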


\begin{proof}
(i)  According to (2.2), we can derive
\begin{equation}
\begin{split}
\frac{\mathbb E\left[e^{-s \overline{X}_{e(q)}}\right]}
{\mathbb E\left[e^{-s \overline{X}_{e(p+q)}}\right]}=e^{\int_{0}^{\infty}(e^{-s x}-1)\Pi_1(dx)},
\ \ for \ \ s \geq 0,
\end{split}
\end{equation}
where $\Pi_1(dx)$ is given by (2.4) and is a measure (since $p>0$). Note that
\begin{equation}
\Pi_1(0,\infty):=\int_{0}^{\infty}\Pi_1(dx)=\int_{0}^{\infty}\frac{1}{t}
e^{-qt}\left(1-e^{-p t}\right)\mathbb P\left(X_t > 0\right)dt < \infty.
\end{equation}
Therefore,  from the L\'evy-Khintchine formula (see, e.g., Theorem 8.1
on page 37 in Sato [16]), we obtain that the right-hand side of (2.10) is the Laplace
transform of an infinitely divisible distribution, i.e., there is an infinitely divisible
distribution $G_1(x)$ on $[0, \infty)$ such that
\begin{equation}
\int_{0^-}^{\infty} e^{-s x}dG_1(x)=e^{\int_{0}^{\infty}(e^{-s x}-1) \Pi_1(dx)}, \ \ s\geq 0.
\end{equation}
Formula (2.3) is derived from (2.10) and (2.12).

(ii) It follows from (2.2) that
\begin{equation}
\begin{split}
\frac{\mathbb E\left[e^{-s \overline{X}_{e(q)}}\right]}
{\mathbb E\left[e^{-s \overline{X}_{e(p+q)}}\right]}=
e^{\int_{0}^{\infty}(1-e^{-s x})\Pi_2(dx)}, \ \ s\geq 0,
\end{split}
\end{equation}
where $\Pi_2(dx)$ is given by (2.7) and is a measure (since $p<0$). Next, it is obvious that
\begin{equation}
\begin{split}
e^{-\int_{0}^{\infty}e^{-sx}\Pi_2(dx)}=
\sum_{n=0}^{\infty}
\frac{(-1)^{n}\int_{0}^{\infty}e^{-sx}d\Pi_2^{*n}(0,x)}{n!},
\end{split}
\end{equation}
where $d\Pi_2^{*0}(0,x)=\delta_0(dx)$; $\Pi_2(0,x):=\int_{0}^{x}\Pi_2(dy)$
and $\Pi_2^{*n}(0,x)$ for $n \geq 1$ is the n-fold convolution of $\Pi_2(0,x)$.

Therefore, for $x \geq 0$, we can define
\begin{equation}
G_{21}(x)=1+\sum_{n=1}^{\infty}\frac{\Pi_2^{*2n}(0,x)}{(2n)!} \ \
and \ \ G_{22}(x)=\sum_{n=1}^{\infty}\frac{\Pi_2^{*(2n-1)}(0,x)}{(2n-1)!},
\end{equation}
which are measures on $[0,\infty)$ obviously. Formulas (2.5) and (2.6) follow directly
from (2.15), and formula  (2.8) is due to (2.5), (2.6) and (2.13).

(iii) Formula (2.3) gives
\begin{equation}
\begin{split}
&\int_{0^-}^{\infty} e^{-s x}dG_1(x)=e^{\int_{0}^{\infty}(e^{-s x}-1) \Pi_1(dx)}\\
&=e^{-\Pi_1(0,\infty)}\sum_{n=0}^{\infty}
\frac{\int_{0}^{\infty}e^{-sx}d\Pi_1^{*n}(0,x)}{n!},
\end{split}
\end{equation}
where $d\Pi_1^{*0}(0, x)=\delta_0(dx)$; $\Pi_1(0,x):=\int_{0}^{x}\Pi_1(dy)$
and $\Pi_1^{*n}(0,x)$ for $n\geq 1$ is the n-fold convolution of $\Pi_1(0,x)$.
From (2.4) and Lemma 2.1, we know $\Pi_1(dx)$ has no atoms.Thus, $G_1(x)$ is continuous
on $(0,\infty)$.

Since $\Pi_2(dx)$ has no atoms, the conclusion that $G_{21}(x)$ and $G_{22}(x)$ are
continuous on $(0,\infty)$ can be seen from (2.15).
\qed
\end{proof}

\begin{Remark}
If we define $G_1(x)=0$ for $x < 0$, then from (2.3), we obtain that $G_1(x)$ is not
left-continuous at $0$ since
\begin{equation}
\begin{split}
&G_1(0)=\lim_{s\uparrow \infty}\int_{0^-}^{\infty} e^{-s x}dG_1(x)
=e^{-\int_{0}^{\infty}\frac{1}{t}e^{-qt}\left(1-e^{-p t}\right)\mathbb P\left(X_t > 0\right)dt}>0.
\end{split}
\end{equation}
\end{Remark}

\begin{Remark}
From Theorem 2.1 (ii),  it is easy to derive that
\begin{equation}
G_{21}(0)=1, \ \  G_{21}(\infty):=\lim_{x \uparrow \infty}G_{21}(x)=\frac{1}{2}
\left(e^{-\int_{0}^{\infty}\Pi_2(dx)}+e^{\int_{0}^{\infty}\Pi_2(dx)}\right),
\end{equation}
and
\begin{equation}
G_{22}(0)=0, \ \  G_{22}(\infty):=\lim_{x \uparrow \infty}G_{22}(x)=\frac{1}{2}
\left(e^{\int_{0}^{\infty}\Pi_2(dx)}-e^{-\int_{0}^{\infty}\Pi_2(dx)}\right).
\end{equation}
In particular,
\begin{equation}
G_{22}(\infty)<G_{21}(\infty)<e^{\int_{0}^{\infty}\Pi_2(dx)} \leq e^{\int_{0}^{\infty}
\frac{1}{t}e^{-qt}(e^{-pt}-1)dt}.
\end{equation}
\end{Remark}

Corollary 2.1 states a similar result to Theorem 2.1, and can be proved by applying Theorem 2.1
to the dual process $-X$.

\begin{Corollary}
(i) For $p,q> 0$, there exists an infinitely divisible distribution $G_3(x)$ on $[0, \infty)$,
whose  Laplace transform is given by
\begin{equation}
\int_{0^-}^{\infty} e^{-s x}dG_3(x)=\frac{\mathbb E\left[e^{s \underline{X}_{e(q)}}\right]}
{\mathbb E\left[e^{s \underline{X}_{e(p+q)}}\right]}
=e^{\int_{0}^{\infty}(e^{-s x}-1) \Pi_3(dx)}, \ \ s\geq 0,
\end{equation}
where
\begin{equation}
\Pi_3(dx)=\int_{0}^{\infty}\frac{1}{t}e^{-qt}\left(1-e^{-pt}\right)
\mathbb P\left(-X_t \in dx\right)dt,\ \ x>0.
\end{equation}

(ii) For $q>0$ and $-q<p<0$, there are  two measures $G_{41}(x)$ and $G_{42}(x)$ on $[0, \infty)$
such that
\begin{equation}
\int_{0^-}^{\infty}e^{-s x}dG_{41}(x)=\frac{1}{2}\left(e^{-\int_{0}^{\infty}
e^{-sx} \Pi_4(dx)}+e^{\int_{0}^{\infty}e^{-sx} \Pi_4(dx)}\right), \ \ s > 0,
\end{equation}
and
\begin{equation}
\int_{0^-}^{\infty}e^{-s x}dG_{42}(x)=\frac{1}{2}\left(e^{\int_{0}^{\infty}
e^{-sx} \Pi_4(dx)}-e^{-\int_{0}^{\infty}e^{-sx} \Pi_4(dx)}\right), \ \ s > 0,
\end{equation}
where $\Pi_4(dx)$ is a measure and is given by
\begin{equation}
\begin{split}
&\Pi_4(dx)=\int_{0}^{\infty}\frac{1}{t}e^{-qt}\left(e^{-pt}-1\right)
\mathbb P\left(-X_t \in d x\right)dt,\ \ x>0.
\end{split}
\end{equation}

(iii) $G_3(x)$, $G_{41}(x)$ and $G_{42}(x)$ are continuous on $(0,\infty)$.
\end{Corollary}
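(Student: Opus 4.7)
The plan is to deduce Corollary 2.1 directly from Theorem 2.1 by passing to the dual (reflected) process $Y := -X$. The first step is to verify that the standing hypotheses of Section 2 carry over to $Y$: since the class of Lévy processes is closed under the map $x\mapsto -x$, and so is the complement of the class of compound Poisson processes, $Y$ itself is a non-compound-Poisson Lévy process. Consequently, Lemma 2.1 and Theorem 2.1 are applicable to $Y$ without modification.

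The second step records the two identifications that translate the statements for $Y$ back into statements for $X$. For every $T\geq 0$,
\begin{equation}
\overline{Y}_T \;=\; \sup_{0\leq t \leq T}(-X_t) \;=\; -\underline{X}_T,
\end{equation}
and the one-dimensional marginals satisfy $\mathbb P(Y_t\in dx) = \mathbb P(-X_t\in dx)$ for all $t\geq 0$. Substituting the first identity into the Laplace transforms appearing in Theorem 2.1(i)--(ii) applied to $Y$ converts $\mathbb E[e^{-s\overline{Y}_{e(q)}}]$ into $\mathbb E[e^{s\underline{X}_{e(q)}}]$, which is exactly the quantity appearing on the right-hand side of (2.20) and the analogue of (2.8) for the dual process. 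Substituting the marginal identity into (2.4) and (2.7) converts the measures $\Pi_1, \Pi_2$ written for $Y$ into exactly the measures $\Pi_3, \Pi_4$ defined in (2.21) and (2.24).

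With these translations in place, the three parts of the corollary follow by direct renaming. The infinitely divisible distribution $G_1$ produced by Theorem 2.1(i) applied to $Y$ is by definition $G_3$, and its Laplace transform identity becomes (2.20). The measures $G_{21}, G_{22}$ produced by Theorem 2.1(ii) applied to $Y$ are by definition $G_{41}, G_{42}$, yielding (2.22) and (2.23). The continuity on $(0,\infty)$ in part (iii) is inherited verbatim from Theorem 2.1(iii), since the non-atomicity of $\Pi_3$ and $\Pi_4$ follows from Lemma 2.1 applied to $Y$ in the same way the non-atomicity of $\Pi_1$ and $\Pi_2$ followed for $X$.

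There is no substantive obstacle: the whole argument is bookkeeping under the involution $X\mapsto -X$. The only point worth making explicit is the closure of the standing hypotheses under this involution, which is immediate, and the careful alignment of signs and measures so that the $Y$-version of (2.3)--(2.8) lines up with (2.20)--(2.24).
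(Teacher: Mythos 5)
Your proposal is correct and follows essentially the same route as the paper: apply Theorem 2.1 to the dual process $X^1=-X$, use $\overline{X^1}_T=-\underline{X}_T$ and $\mathbb P(X^1_t\in dx)=\mathbb P(-X_t\in dx)$ to translate $\Pi_1,\Pi_2,G_1,G_{21},G_{22}$ into $\Pi_3,\Pi_4,G_3,G_{41},G_{42}$, with continuity inherited from Theorem 2.1(iii). Your explicit remark that the non-compound-Poisson hypothesis is preserved under $X\mapsto -X$ is a small point the paper leaves implicit, but otherwise the arguments coincide.
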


\begin{proof}
For $t\geq 0$, let $X^1_t=-X_t$.  If $p,q>0$,  Theorem 2.1 (i) leads to that
 there is an infinitely divisible distribution $G_3(x)$ on $[0, \infty)$ such that
\begin{equation}
\int_{0^-}^{\infty} e^{-s x}dG_3(x)=\frac{\mathbb E\left[e^{-s \overline{X^1}_{e(q)}}\right]}
{\mathbb E\left[e^{-s \overline{X^1}_{e(p+q)}}\right]}
=e^{\int_{0}^{\infty}(e^{-s x}-1) \Pi_3(dx)}, \ \ s\geq 0,
\end{equation}
where
\begin{equation}
\Pi_3(dx)=\int_{0}^{\infty}\frac{1}{t}e^{-qt}\left(1-e^{-pt}\right)
\mathbb P\left(X^1_t \in d x\right)dt.
\end{equation}
Then, formulas (2.21) and (2.22) are followed after replacing $X^1$ by $-X$ in (2.26)
 and (2.27). The proofs of (ii) and (iii) are similar, thus we omit the details.
\qed
\end{proof}

\section{Main results}
\label{sec:3}
In this section, we first give a primary result (Theorem 3.1) in subsection 3.1, and
then present some corollaries in subsection 3.2.

\subsection{A primary result}
\label{sec:3.1}
For given $y \geq b$, $q>0$ and $p>-q$, define
\begin{equation}
V_q(x):=\mathbb E_x\left[e^{-p\int_{0}^{e(q)}\rm{\bf{1}}_{\{X_s \leq b\}}ds}
\rm{\bf{1}}_{\{X_{e(q)}>y\}}\right], \ \ x \in \mathbb R.
\end{equation}

The following Theorem 3.1 is the primary result of this paper, whose proof is very long
and is postponed  to the later two sections.

\begin{Theorem}
For  $q>0$ and $p>-q$, we have
\begin{equation}
V_q(x)-\mathbb P_x\left(X_{e(q)}> y\right)=J_1(b-x;y-b), \ \ y\geq b,
\end{equation}
with
\begin{equation}
J_1(x;y-b)=\int_{-\infty}^{x}F_1(x-z+y-b)dK_{q}(z), \ \ x \in \mathbb R,
\end{equation}
where $K_{q}(x)$ is the convolution of the probability distribution functions of
$\underline{X}_{e(q)}$ and $\overline{X}_{e(p+q)}$ under $\mathbb P$, i.e.,
\begin{equation}
K_{q}(x)=\int_{-\infty}^{\min\{0,x\}^+}\mathbb P\left(\overline{X}_{e(p+q)} \leq x-z\right)
\mathbb P\left(\underline{X}_{e(q)} \in dz \right), \ \ x \ \in \mathbb R,
\end{equation}
and $F_1(x)$ is a continuous function on $(0, \infty)$ with the Laplace transform
\begin{equation}
\begin{split}
&\int_{0}^{\infty} e^{-s x}F_1(x)dx
=\frac{1}{s}\left(\frac{\mathbb E\left[e^{-s \overline{X}_{e(q)}}\right]}
{\mathbb E\left[e^{-s \overline{X}_{e(p+q)}}\right]}-1\right), \ \ s > 0.
\end{split}
\end{equation}
\end{Theorem}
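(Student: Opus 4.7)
The plan is to follow the two-step program announced in the introduction: first establish the identity when $X$ is a jump-diffusion whose jump distributions have rational Laplace transforms, and then lift it to a general (non-compound-Poisson) L\'evy process by an approximation and continuity-theorem argument.

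For the restricted class, I would begin from the Feynman--Kac interpretation of $V_q(x)$. Combining the independent horizon $e(q)$ with the factor $e^{-p\int_0^{e(q)}\mathbf{1}_{\{X_s\leq b\}}ds}$ amounts to running $X$ with killing rate $q$ above $b$ and killing rate $p+q$ below $b$, so $V_q(x)$ is the ``survival plus terminal position above $y$'' probability for this differentially-killed process. Because jump-diffusions with rational Laplace transform jumps have phase-type overshoots at first passage across $b$, one can apply the strong Markov property at the successive first passages into $(-\infty,b]$ and back into $(b,\infty)$ and sum the resulting alternating series in the two exit laws. Each above-$b$ sojourn contributes the Wiener--Hopf factor corresponding to the rate $q$ exponential, and each below-$b$ sojourn contributes the factor corresponding to the rate $p+q$ exponential; the telescoping ratio of these two factors is precisely $\mathbb{E}[e^{-s\overline{X}_{e(q)}}]/\mathbb{E}[e^{-s\overline{X}_{e(p+q)}}]$, identifying $F_1$ via (3.5). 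Pairing the deepest descent below $b$ (in law, a shift of $\underline{X}_{e(q)}$) with the subsequent ascent above $b$ (governed by $\overline{X}_{e(p+q)}$) then gives rise to the convolution $K_q$ of (3.4), and the shifted argument $x-z+y-b$ in (3.3) records exactly the overshoot needed to surpass $y$ starting from depth $z$ below $b$.

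For the extension to an arbitrary $X$, I would approximate $X$ weakly by a sequence of jump-diffusions $X^{(n)}$ whose jumps have rational Laplace transforms (truncating small jumps, replacing the rest by hyper-exponential mixtures, and adjusting the Brownian variance to match the truncated characteristics). By Lemma 2.2 and Corollary 2.1 the weak convergence $X^{(n)} \Rightarrow X$ transfers to the Laplace transforms of $\overline{X}_{e(q)}^{(n)}$, $\overline{X}_{e(p+q)}^{(n)}$ and $\underline{X}_{e(q)}^{(n)}$, so the measures $K_q^{(n)}$ and the functions $F_1^{(n)}$ defined via (3.4)--(3.5) converge to $K_q$ and $F_1$ at continuity points (which is enough, thanks to the continuity statement in Theorem 2.1(iii) and its corollary). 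A dominated-convergence argument then passes (3.2) from $X^{(n)}$ to $X$.

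The main obstacle lies in the restricted case: producing the right-hand side in exactly the form $J_1(b-x;\,y-b)$, with the precise convolution kernel $K_q$ and the precise shift of $F_1$, requires careful bookkeeping of how the Wiener--Hopf factors at rates $q$ and $p+q$ alternate across successive level-$b$ crossings, and Lemma 2.1 must be invoked repeatedly to rule out atoms at $b$ and $y$ (which would obstruct both the strong Markov decomposition and the Laplace inversion). Theorem 2.1 and Corollary 2.1 are needed throughout to identify the Laplace transforms arising in the sum with bona-fide measures. By contrast, the approximation step, though technical, is mostly a standard continuity argument once the identity has been pinned down in the special case.
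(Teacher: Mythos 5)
Your two-step architecture (prove the identity for jump diffusions with rational-transform jumps, then pass to a general L\'evy process by approximation and the continuity theorem for Laplace transforms) is the same as the paper's, but the heart of the matter --- actually establishing (3.2)--(3.3) in the rational case --- is where your proposal has a genuine gap. You describe an alternating decomposition over successive crossings of level $b$, assert that the above-$b$ and below-$b$ sojourns contribute Wiener--Hopf factors at rates $q$ and $p+q$ whose ratio ``telescopes'' to $\mathbb{E}[e^{-s\overline{X}_{e(q)}}]/\mathbb{E}[e^{-s\overline{X}_{e(p+q)}}]$, and that ``pairing the deepest descent with the subsequent ascent'' produces $K_q$. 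None of these identifications is justified, and as stated they are doubtful: $K_q$ is the convolution of the laws of the \emph{global} infimum $\underline{X}_{e(q)}$ and the \emph{global} supremum $\overline{X}_{e(p+q)}$, not of any single descent/ascent pair across $b$; for a two-sided jump process the overshoots and undershoots at successive crossings do not visibly cancel into a ratio of Wiener--Hopf factors; and you yourself flag this bookkeeping as ``the main obstacle'' without resolving it. The paper does something quite different and concrete: it applies the strong Markov property only once in each direction (at $\tau_b^+$ for $x<b$ with rate $p+q$, at $\tau_b^-$ for $x>b$ with rate $q$), writes $V_q$ as a finite linear combination of exponentials $e^{\beta_{k,p+q}(x-b)}$, $e^{\beta_{k,q}(x-y)}$, $e^{\gamma_{k,q}(b-x)}$ with unknown coefficients (Proposition 4.1), pins the coefficients down by continuity and smooth fit at $b$ together with rational-expansion identities (Appendix A), then checks in Lemma 4.2 that the transform of $V_q(x)-\mathbb{P}_x(X_{e(q)}>y)$ equals $\mathbb{E}[e^{\phi\underline{X}_{e(q)}}]\mathbb{E}[e^{\phi\overline{X}_{e(p+q)}}]\int_0^\infty F_0(x+y-b)e^{\phi x}\,dx$ and inverts, using continuity of both sides. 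Some computation of this kind (or a fully worked-out excursion identity) is indispensable; without it your first step is a plan, not a proof. You also skip the intermediate issue that the explicit root-based formulas only hold when $\psi(z)=q$ and $\psi(z)=p+q$ have simple roots, which the paper handles by a separate limit $q_n\downarrow q$ (Subsection 4.2).

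In the approximation step your outline is closer to the paper, but two points need real arguments rather than citations. First, since $F_1^n$ is evaluated at moving arguments and integrated against the moving kernels $K_q^n$, one needs pointwise convergence of $K_q^n(x)$ at \emph{every} $x$ (obtained in the paper from convergence in distribution plus the continuity of $K_q$, Proposition 3.1, which is exactly where the exclusion of compound Poisson processes enters) and uniform convergence together with uniform boundedness of the $F_1^n$ on $[0,\infty]$; for $p>0$ this follows because $F_1^n+1$ are distribution functions, but for $-q<p<0$ (which your statement covers) one must argue through the decomposition $e^{\Pi_2(0,\infty)}(G_{21}-G_{22})-1$ of Theorem 2.1(ii) and the bound (3.9), a case your sketch does not address. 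Second, the paper uses the almost sure locally uniform approximation of Asmussen et al.\ (Lemma 5.1) together with the inequality (5.10) relating suprema of the approximants to suprema of $X$; mere weak convergence of your truncated hyper-exponential approximations would require a separate justification that the extrema and the occupation-time functional converge.
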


Before going further, we give some properties of $K_q(x)$ in (3.4) and $F_1(x)$ in (3.5)
in Propositions 3.1 and 3.2, respectively. These two propositions are important for the
rest of the paper.

\begin{Proposition}
$K_{q}(x)$ in (3.4) is continuous  on  $(-\infty, \infty)$.
\end{Proposition}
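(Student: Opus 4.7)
The plan is to recognize $K_q$ as the cumulative distribution function of $Y_1 + Y_2$, where $Y_1$ and $Y_2$ are independent with $Y_1 \stackrel{d}{=} \underline{X}_{e(q)}$ (so $Y_1 \leq 0$ almost surely) and $Y_2 \stackrel{d}{=} \overline{X}_{e(p+q)}$ (so $Y_2 \geq 0$ almost surely); this is exactly what formula (3.4) encodes, with the upper limit $\min\{0,x\}^+$ reflecting the sign constraints on $Y_1$ and $Y_2$. Continuity of $K_q$ on $\mathbb{R}$ is then equivalent to the statement $\mathbb{P}(Y_1 + Y_2 = x) = 0$ for every $x \in \mathbb{R}$, so my task reduces to showing the convolved distribution is atomless.

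For $x \neq 0$ I would first observe, via Lemma 2.3 and its dual, that $\overline{X}_{e(p+q)}$ is infinitely divisible on $[0,\infty)$ with L\'evy measure $\int_{0}^{\infty} \tfrac{1}{t}\, e^{-(p+q)t}\, \mathbb{P}(X_t \in dx)\, dt$ on $(0,\infty)$, which carries no atoms by Lemma 2.1. Repeating the bookkeeping from the proof of Theorem 2.1(iii) then gives that $\overline{X}_{e(p+q)}$ places no mass on any single point of $(0,\infty)$; applying the same argument to $-X$ yields the analogous statement for $\underline{X}_{e(q)}$ on $(-\infty,0)$. Since $Y_1 \leq 0 \leq Y_2$, every $x \neq 0$ forces at least one of $Y_1 = x - Y_2$ or $Y_2 = x - Y_1$ into the atomless region, and conditioning on the other variable gives $\mathbb{P}(Y_1 + Y_2 = x) = 0$.

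The main obstacle is the point $x = 0$, because $Y_1 + Y_2 = 0$ forces $Y_1 = Y_2 = 0$ and independence reduces the question to showing the product $\mathbb{P}(\underline{X}_{e(q)} = 0) \cdot \mathbb{P}(\overline{X}_{e(p+q)} = 0)$ vanishes. My plan is to invoke the Wiener--Hopf factorization $X_{e(q)} \stackrel{d}{=} S + I$ with $S$ and $I$ independent, $S \stackrel{d}{=} \overline{X}_{e(q)}$, and $I \stackrel{d}{=} \underline{X}_{e(q)}$, and to repeat the conditioning argument above on $S + I$ to obtain $\mathbb{P}(X_{e(q)} = 0) = \mathbb{P}(\overline{X}_{e(q)} = 0)\, \mathbb{P}(\underline{X}_{e(q)} = 0)$. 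The left-hand side vanishes by Lemma 2.1, so at least one of the two atom masses at rate $q$ is zero. To transfer this to the mismatched rate $p+q$ appearing in $Y_2$, I will exploit the monotonicity in $t$ of $\mathbb{P}(\overline{X}_t = 0)$ and $\mathbb{P}(\underline{X}_t = 0)$: if $\int_{0}^{\infty} r\, e^{-rt}\, \mathbb{P}(\overline{X}_t = 0)\, dt = 0$ for one rate $r > 0$, then $\mathbb{P}(\overline{X}_t = 0) = 0$ for every $t > 0$, and hence the same integral vanishes for every rate, and analogously for $\underline{X}_t$. Combining these observations kills the atom at $0$ and completes the continuity of $K_q$.
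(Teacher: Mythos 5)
Your argument is correct, but it follows a genuinely different route from the paper's. The paper's proof is a two-line dichotomy: since $X$ is not a compound Poisson process, $0$ is regular for $(0,\infty)$ or for $(-\infty,0)$ (Kyprianou, Theorem 6.5 and Corollary 6.6), and a pathwise occupation-time argument (the footnote to Proposition 3.1) then shows that the corresponding extremum, $\overline{X}_{e(r)}$ for every $r>0$ or $\underline{X}_{e(r)}$ for every $r>0$, has an atomless law on all of $\mathbb{R}$ --- including at $0$ --- so $K_q$, being a convolution with at least one atomless factor, is continuous. You instead split the problem by location of the putative atom: away from $0$ you use the infinite divisibility of $\overline{X}_{e(p+q)}$ and $\underline{X}_{e(q)}$ coming from formula (2.2) (which you cite as ``Lemma 2.3''; it is Lemma 2.2) together with the atomlessness of the associated L\'evy measures (Lemma 2.1); at $0$ you use the Wiener--Hopf factorization, $\mathbb{P}(X_{e(q)}=0)=0$ from Lemma 2.1, and a neat monotonicity-in-$t$ argument to transfer the vanishing of the atom from rate $q$ to rate $p+q$ (note that only the inequality $\mathbb{P}(X_{e(q)}=0)\geq \mathbb{P}(\overline{X}_{e(q)}=0)\mathbb{P}(\underline{X}_{e(q)}=0)$ is needed there). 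What each approach buys: the paper's is shorter and delivers the stronger conclusion that one of the two convolution factors is globally atomless, but it leans on the regularity classification; yours avoids that machinery entirely and runs only on Lemma 2.1, formula (2.2) and Wiener--Hopf, at the price of the case analysis at $x=0$. One point to patch: ``repeating the bookkeeping from Theorem 2.1(iii)'' literally works only when the L\'evy measure $\int_{0}^{\infty}t^{-1}e^{-(p+q)t}\mathbb{P}(X_t\in dx)\,dt$ has finite total mass on $(0,\infty)$, since otherwise the exponential series of convolution powers is not available; in the infinite-mass case you should instead invoke the standard fact that an infinitely divisible law with infinite L\'evy measure is continuous (Sato, Theorem 27.4), which in fact also kills the atom at $0$ in that case. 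With that adjustment your proof is complete.
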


\begin{proof}
If $0$ is regular for $(0,\infty)$ or $(-\infty,0)$, i.e., $\mathbb P\left(\tau^+=0\right) =1$
or $\mathbb P\left(\tau^-=0\right) =1$, where $\tau^+=\inf\{t>0, X_t > 0\}$ and
$\tau^-=\inf\{t>0, X_t < 0\}$, then $\mathbb P\left(\overline{X}_{e(q)}=z\right)=0$ or
$\mathbb P\left(\underline{X}_{e(q)}=z\right)=0$ for any  $q>0$ and all $z \in \mathbb R$\footnote{For some $z \in \mathbb R$ and $T>0$, if $\int_{0}^{T}\rm{\bf{1}}_{\{\overline{X}_{t}=z\}}dt>0$, then there is at least one interval $(a,b)$ such that $\overline{X}_t=z$ for all $t \in (a,b)$ as the paths of $\overline{X}$ are non-decreasing. Since $0$ is regular for $(0,\infty)$, the probability $\mathbb P\left(\overline{X}_{t}=z \ \ for\ \  all \ \ t \in (a,b)\right)$ is zero, where $a<b$. This gives
$\mathbb E\left[\int_{0}^{T}\rm{\bf{1}}_{\{\overline{X}_{t}=z\}}dt\right]=0$ for all $T>0$ and $z \in \mathbb R$, thus $\mathbb P\left(\overline{X}_{e(q)}=z\right)=0$ for all $q>0$. },
thus the result that $K_{q}(x)$  is  continuous  on $\mathbb R$ is followed.

From Theorem 6.5 on page 142 and Corollary 6.6 on page 144 in Kyprianou [10], we
obtain that $0$ is irregular for both $(0,\infty)$ and $(-\infty,0)$ (i.e.,
$\mathbb P\left(\tau^+=0\right)=\mathbb P\left(\tau^-=0\right)=0$) only when $X$ is a compound
Poisson process. Since the compound Poisson process is excluded in this paper, it must hold
that $0$ is regular for $(0,\infty)$ or $(-\infty,0)$. Thus the desired result is derived.
\qed
\end{proof}

\begin{Proposition}
(i) For $q>0$ and $p > -q$, it holds that
\begin{equation}
F_1(x)=\left\{\begin{array}{cc}
G_1(x)-1, & if \ \ p > 0,\\
e^{\int_{0}^{\infty}\Pi_2(dx)}\Big(G_{21}(x)-G_{22}(x)\Big)-1, & if \ \ p < 0,
\end{array}
\right.
\end{equation}
where $G_1(x)$, $\Pi_2(dx)$, $G_{21}(x)$  and $G_{22}(x)$ are given by Theorem 2.1.
Moreover,
\begin{equation}
F_1(0):=\lim_{x\downarrow 0}F_1(x)=e^{-\int_{0}^{\infty}\frac{1}{t}e^{-qt}
\left(1-e^{-p t}\right)\mathbb P\left(X_t > 0\right)dt}-1,
\ \ \ F_1(\infty):=\lim_{x \uparrow \infty}F_1(x)=0.
\end{equation}

(ii) For $p > 0$, $F_1(x)$ is continuous, increasing and bounded on $[0,\infty]$, and
\begin{equation}
-1 \leq F_1(x)\leq 0, \ \ for \ \ any \ \ x \geq 0.
\end{equation}

(iii) For $-q< p < 0$ and $x\geq 0$,
\begin{equation}
|F_1(x)| < 2 e^{2\int_{0}^{\infty}\frac{1}{t}e^{-qt}(e^{-pt}-1)dt}+1.
\end{equation}
\end{Proposition}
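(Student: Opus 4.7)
The plan is to invert the Laplace transform defining $F_1$ by substituting into (3.5) the explicit Laplace--Stieltjes representations for the ratio $\mathbb E[e^{-s\overline X_{e(q)}}]/\mathbb E[e^{-s\overline X_{e(p+q)}}]$ furnished by Theorem 2.1 in each sign regime of $p$, and then invoking the elementary identity $(e^{-sx}-1)/s=-\int_0^x e^{-sy}\,dy$ together with Fubini to read off $F_1(x)$ directly. Uniqueness of Laplace transforms, combined with the continuity of the candidate expression on $(0,\infty)$ (which is where Theorem 2.1(iii) enters), will then deliver pointwise identity on $(0,\infty)$.

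For $p>0$, Theorem 2.1(i) identifies the ratio with $\int_{0^-}^\infty e^{-sx}\,dG_1(x)$, so, since $G_1$ has total mass $1$, the right-hand side of (3.5) becomes $s^{-1}\int_{0^+}^\infty(e^{-sx}-1)\,dG_1(x)$. Applying the identity above and swapping integrals (justified by the finiteness of $G_1$) produces $\int_0^\infty e^{-sy}(G_1(y)-1)\,dy$, and Laplace inversion gives $F_1(x)=G_1(x)-1$ on $(0,\infty)$. For $-q<p<0$ I would argue in parallel, using (2.8) and setting $H(x):=e^{\int_0^\infty \Pi_2(dx)}\bigl(G_{21}(x)-G_{22}(x)\bigr)$, so that the ratio equals $\int_{0^-}^\infty e^{-sx}\,dH(x)$. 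A short computation from (2.18)--(2.19) shows $H(\infty)=e^{\int_0^\infty \Pi_2(dx)}\cdot e^{-\int_0^\infty \Pi_2(dx)}=1$, which is exactly what is needed to run the same argument; since $H$ is a difference of two bounded increasing functions it has bounded variation, so the Fubini swap still goes through, and we obtain $F_1(x)=H(x)-1$.

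The remaining assertions then follow from these explicit formulas. We have $F_1(\infty)=0$ since $G_1(\infty)=H(\infty)=1$, while $F_1(0)=G_1(0)-1=e^{-\int_0^\infty \Pi_1(dx)}-1$ comes from letting $s\to\infty$ in (2.3) (using finiteness of $\Pi_1$ and dominated convergence). Part (ii) is immediate: $F_1=G_1-1$ inherits continuity, monotonicity and the bounds $-1\le F_1\le 0$ from $G_1$ being a CDF on $[0,\infty)$. For part (iii), I would bound $|F_1|\le|H|+1\le m\bigl(G_{21}(x)+G_{22}(x)\bigr)+1$ with $m=e^{\int_0^\infty \Pi_2(dx)}$, then use (2.18)--(2.19) to get $G_{21}(\infty)+G_{22}(\infty)=e^{\int_0^\infty \Pi_2(dx)}$, yielding $|F_1|\le e^{2\int_0^\infty \Pi_2(dx)}+1$; combining with the estimate in (2.9) gives the stated bound. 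The one genuinely delicate step is correctly tracking the atom at $0$ of the measures $dG_1$ and $dH$ when inverting the factor $1/s$: I would handle this by always integrating from $0^-$ and exploiting that both the constant $1$ and the total mass of the relevant measure equal $1$, so that their difference is supported on $(0,\infty)$ and the atom at $0$ drops out cleanly.
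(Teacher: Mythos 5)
Your proof is correct and takes essentially the same route as the paper: both arguments identify the Laplace transform in (3.5) with that of $G_1(x)-1$ (resp.\ $e^{\int_0^\infty\Pi_2(dx)}\bigl(G_{21}(x)-G_{22}(x)\bigr)-1$) and conclude via uniqueness of Laplace transforms plus continuity on $(0,\infty)$, your Fubini computation being just a rephrasing of the paper's integration by parts on (2.3) and (2.8). The only cosmetic difference is that you read the limits in (3.7) and the bounds in parts (ii)--(iii) directly off the explicit formulas and (2.18)--(2.20), whereas the paper invokes the initial/final value theorems for Laplace transforms; both are valid.
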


\begin{proof}
(i) Applying integration by parts to (2.3) leads to
\begin{equation}
\int_{0}^{\infty} e^{-s x}G_1(x)dx=\frac{1}{s}\frac{\mathbb E\left[e^{-s \overline{X}_{e(q)}}\right]}
{\mathbb E\left[e^{-s \overline{X}_{e(p+q)}}\right]},
\end{equation}
which combined with (3.5), yields
\begin{equation}
\int_{0}^{\infty} e^{-s x}F_1(x)dx=\int_{0}^{\infty} e^{-s x}\left(G_1(x)-1\right)dx, \ \ s>0.
\end{equation}
This gives $F_1(x)=G_1(x)-1$, thus (3.6) holds for $p > 0$.  Similarly, from (2.8), we can
show that (3.6) is also valid  for $p < 0$.

Then, noting that
\begin{equation}
F_1(0)=\lim_{s \uparrow \infty}\int_{0}^{\infty}se^{-s x}F_1(x)dx \ \
and \ \ F_1(\infty)=\lim_{s \uparrow 0}\int_{0}^{\infty}s e^{-s x}F_1(x)dx,
\end{equation}
we can derive (3.7) from (2.3), (2.8) and (3.5).

(ii) This result can be obtained from (3.6) since $G_1(x)$ is a probability distribution
function and is continuous (see Theorem 2.1 (i)).

(iii) This result is due to (2.20) and (3.6) since
\[|F_1(x)| < e^{\int_{0}^{\infty}\Pi_2(dx)}2G_{21}(\infty)+1.\]
\qed
\end{proof}


\begin{Remark}
Since $G_1(x)$, $G_{21}(x)$  and $G_{22}(x)$ are measures. Formula (3.6) means that
$F_1(x)$ can be written as
\[
F_1(x)-F_1(0)=\int_{0}^{x}F_1(dz), \ \ x > 0,
\]
where  for $z > 0$,
\[
F_1(dz)=\left\{\begin{array}{cc}
G_1(dz), & if \ \ p > 0,\\
e^{\int_{0}^{\infty}\Pi_2(dx)}\Big(G_{21}(dz)-G_{22}(dz)\Big), & if \ \ p < 0.
\end{array}
\right.
\]
\end{Remark}

\begin{Remark}
Since $F_1(dx)$ for $x>0$ is well defined, for given $q>0$ and $p>-q$, formula (3.5) gives
\begin{equation}
\int_{0}^{\infty} e^{-s x}F_1(dx)+F_1(0)+1
=\frac{\mathbb E\left[e^{-s \overline{X}_{e(q)}}\right]}
{\mathbb E\left[e^{-s \overline{X}_{e(p+q)}}\right]},\ \  s>0.
\end{equation}
As $F_1(x)$ is bounded and continuous on $[0,\infty]$ (see Proposition 3.2), the above identity (3.13) can be extended to the half-plane $Re(s) \geq 0$. Particularly,
\begin{equation}
\int_{0^-}^{\infty} e^{i \phi x}d(F_1(x)+1)
=\frac{\mathbb E\left[e^{i \phi \overline{X}_{e(q)}}\right]}
{\mathbb E\left[e^{i \phi \overline{X}_{e(p+q)}}\right]},\ \ for \ \ \phi \in \mathbb R.
\end{equation}
\end{Remark}
\begin{Remark}
If $y=b$ in (3.1), then for fixed $b \in \mathbb R$, it follows from (3.2)--(3.5) and (3.14) that
\begin{equation}
\int_{-\infty}^{\infty}e^{-i \phi(x-b)}dV_q(x)=\mathbb E\left[e^{i \phi \underline{X}_{e(q)}}\right]\mathbb E\left[e^{i \phi
\overline{X}_{e(p+q)}}\right], \ \ \phi \in \mathbb R.
\end{equation}
In particular, if $p=0$, then (3.15) will reduce to the following well-known Wiener-Hopf factorization (see, e.g.,
Theorem 6.16 in [10])
\[
\mathbb E\left[e^{i \phi X_{e(q)}}\right]=\mathbb E\left[e^{i \phi \underline{X}_{e(q)}}\right]\mathbb E\left[e^{i \phi
\overline{X}_{e(q)}}\right], \ \ \phi \in \mathbb R.
\]
\end{Remark}

\subsection{Some corollaries}
\label{sec:3.2}
\begin{Corollary}
For $q>0$ and $p>-q$,
\begin{equation}
\mathbb E_{x}\left[e^{-p\int_{0}^{e(q)}\rm{\bf{1}}_{\{X_s \geq b\}}ds}
\rm{\bf{1}}_{\{X_{e(q)}< y\}}\right]-\mathbb P_x\left(X_{e(q)} < y\right)
=J_2(x-b;b-y),\ \ y \leq  b,
\end{equation}
with
\begin{equation}
J_2(x;b-y)=\int_{-\infty}^{x}F_2(x-z+b-y)d L_{q}(z),  \ \ x \in \mathbb R,
\end{equation}
where $L_{q}(x)$ is the convolution of the probability distribution functions of
$-\underline{X}_{e(p+q)}$ and $-\overline{X}_{e(q)}$ under $\mathbb P$, i.e.,
\begin{equation}
L_{q}(x)=\int_{\max\{0,x\}^-}^{\infty}\mathbb P\left(-\overline{X}_{e(q)} \leq x-z\right)
\mathbb P\left(-\underline{X}_{e(p+q)} \in dz \right), \ \ x \ \in \mathbb R,
\end{equation}
and $F_2(x)$ is a continuous function on $(0,\infty)$ with the Laplace transform
\begin{equation}
\begin{split}
&\int_{0}^{\infty} e^{-sz}F_2(z)dz
=\frac{1}{s}\left(\frac{\mathbb E\left[e^{s \underline{X}_{e(q)}}\right]}
{\mathbb E\left[e^{s \underline{X}_{e(p+q)}}\right]}-1\right), s > 0.
\end{split}
\end{equation}
\end{Corollary}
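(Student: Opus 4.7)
The strategy is to reduce Corollary 3.1 to Theorem 3.1 by passing to the dual L\'evy process $\tilde{X} := -X$. Since $X$ is not a compound Poisson process, neither is $\tilde{X}$, so the hypotheses of Theorem 3.1 apply verbatim with $\tilde{X}$ in place of $X$. The elementary observations driving the reduction are the pathwise identities $\mathbf{1}_{\{X_s \geq b\}} = \mathbf{1}_{\{\tilde{X}_s \leq -b\}}$ and $\{X_{e(q)} < y\} = \{\tilde{X}_{e(q)} > -y\}$, together with the fact that $X_0 = x$ corresponds to $\tilde{X}_0 = -x$.

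Using these identities I will rewrite the left-hand side of (3.16) as $\tilde{\mathbb{E}}_{-x}\bigl[e^{-p\int_0^{e(q)} \mathbf{1}_{\{\tilde{X}_s \leq \tilde{b}\}} ds} \mathbf{1}_{\{\tilde{X}_{e(q)} > \tilde{y}\}}\bigr]$ with $\tilde{b} := -b$ and $\tilde{y} := -y$. The constraint $y \leq b$ in the corollary translates precisely into $\tilde{y} \geq \tilde{b}$, which is what Theorem 3.1 requires. Applying Theorem 3.1 to $\tilde{X}$ started at $-x$ then yields
\[
\tilde{\mathbb{E}}_{-x}\bigl[\,\cdots\,\bigr] - \mathbb{P}_x\bigl(X_{e(q)} < y\bigr) \;=\; J_1^{\tilde{X}}\bigl(\tilde{b} - (-x);\, \tilde{y} - \tilde{b}\bigr) \;=\; J_1^{\tilde{X}}(x-b;\, b-y),
\]
after using $\tilde{\mathbb{P}}_{-x}(\tilde{X}_{e(q)} > -y) = \mathbb{P}_x(X_{e(q)} < y)$. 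What remains is to identify $J_1^{\tilde{X}}(x-b;\,b-y)$ with the right-hand side $J_2(x-b;\,b-y)$ of (3.17).

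This last identification splits into verifying $F_1^{\tilde{X}} = F_2$ and $K_q^{\tilde{X}} = L_q$. The first is immediate: substituting $\overline{\tilde{X}} = -\underline{X}$ into the Laplace transform defining $F_1$ in (3.5) gives exactly (3.19). For the second, the substitutions $\underline{\tilde{X}}_{e(q)} = -\overline{X}_{e(q)}$ and $\overline{\tilde{X}}_{e(p+q)} = -\underline{X}_{e(p+q)}$ show that both $K_q^{\tilde{X}}$ and $L_q$ are the distribution function of the \emph{same} sum of two independent random variables, with the apparent disagreement of the integration ranges $(-\infty,\min\{0,x\}^+]$ and $[\max\{0,x\}^-,\infty)$ reconciled by a one-line change of variable that swaps which marginal one integrates against first. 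The only potential obstacle is keeping the sign flips straight throughout; no genuinely new probabilistic input beyond Theorem 3.1 and the symmetry arguments already used to derive Corollary 2.1 from Theorem 2.1 is needed.
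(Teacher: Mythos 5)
Your proposal is correct and follows essentially the same route as the paper: the paper's own proof also applies Theorem 3.1 to the dual process $X^1=-X$ with barrier $-b$ and level $-y$ (so that $y\leq b$ becomes $-y\geq -b$), identifies $F_2$ through $\overline{X^1}_{e(\cdot)}=-\underline{X}_{e(\cdot)}$, and identifies the convolution of $\underline{X^1}_{e(q)}$ and $\overline{X^1}_{e(p+q)}$ with $L_q$ before translating the starting point $x\mapsto -x$. Your treatment is, if anything, slightly more explicit than the paper's, which simply asserts the identification of $L_q$ with the dual convolution $K_q^{X^1}$ that you verify by swapping the marginals.
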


\begin{proof}
Consider the dual process $X^1_t=-X_t$ for $t\geq 0$. For $q>0$ and $p>-q$, the convolution
of the probability distribution functions of $\underline{X^1}_{e(q)}$ and
$\overline{X^1}_{e(p+q)}$ under $\mathbb P$ is given by $L_q(x)$ in (3.18).

As $y \leq b$, i.e., $-y \geq -b$, we can obtain from Theorem 3.1 that
\begin{equation}
\mathbb E_x\left[e^{-p\int_{0}^{e(q)}\rm{\bf{1}}_{\{X^1_s \leq -b\}}ds}
\rm{\bf{1}}_{\{X^1_{e(q)}>-y\}}\right]-\mathbb P_x\left(X^1_{e(q)}> -y\right)
=J_2(-b-x;b-y),
\end{equation}
with
\begin{equation}
J_2(x;b-y)=\int_{-\infty}^{x}F_2(x-z-y+b)d L_{q}(z), \ \ x \in \mathbb R,
\end{equation}
where $F_2(x)$ is given by (3.19) since
\[
\begin{split}
&\int_{0}^{\infty} e^{-s x}F_2(x)dx
=\frac{1}{s}\left(\frac{\mathbb E\left[e^{-s \overline{X^1}_{e(q)}}\right]}
{\mathbb E\left[e^{-s \overline{X^1}_{e(p+q)}}\right]}-1\right)=\frac{1}{s}
\left(\frac{\mathbb E\left[e^{s \underline{X}_{e(q)}}\right]}
{\mathbb E\left[e^{s \underline{X}_{e(p+q)}}\right]}-1\right).
\end{split}
\]
In addition, it is obvious that
\begin{equation}
\mathbb E_x\left[e^{-p\int_{0}^{e(q)}\rm{\bf{1}}_{\{X^1_s \leq -b\}}ds}
\rm{\bf{1}}_{\{X^1_{e(q)}>-y\}}\right]=\mathbb E_{-x}\left[e^{-p\int_{0}^{e(q)}
\rm{\bf{1}}_{\{X_s \geq b\}}ds}\rm{\bf{1}}_{\{X_{e(q)}< y\}}\right],
\end{equation}
and
\begin{equation}
\mathbb P_x\left(X^1_{e(q)}> -y\right)=\mathbb P_{-x}\left(X_{e(q)}< y\right).
\end{equation}

Thus, formula (3.16) is derived from (3.20) by first using the last two formulas and
then replacing $-x$ by $x$.
\qed
\end{proof}

\begin{Remark}
Similar to the derivation of (3.6), we can show that
\begin{equation}
F_2(x)=\left\{\begin{array}{cc}
G_3(x)-1, & if \ \ p > 0,\\
e^{\int_{0}^{\infty}\Pi_4(dx)}\Big(G_{41}(x)-G_{42}(x)\Big)-1, & if \ \ p < 0,
\end{array}
\right.
\end{equation}
where $G_3(x)$, $\Pi_4(dx)$, $G_{41}(x)$ and $G_{42}(x)$ are given by Corollary 2.1.
\end{Remark}

\begin{Remark}
Similar to the derivation of (3.15), for fixed $b \in \mathbb R$,  we can deduce the following result from (3.16).
\[
\int_{-\infty}^{\infty}e^{-i \phi(x-b)}d\left(\mathbb E_{x}\left[e^{-p\int_{0}^{e(q)}\rm{\bf{1}}_{\{X_s \geq b\}}ds}
\rm{\bf{1}}_{\{X_{e(q)}< b\}}\right]\right)=-\mathbb E\left[e^{i \phi \underline{X}_{e(p+q)}}\right]\mathbb E\left[e^{i \phi
\overline{X}_{e(q)}}\right], \ \ \phi \in \mathbb R.
\]
\end{Remark}

\begin{Corollary}
(i) For $p, q > 0$ and $y \geq b$, we have
\[
\begin{split}
&\mathbb E_x\left[e^{-p\int_{0}^{e(q)}\rm{\bf{1}}_{\{X_s > b\}}ds}
\rm{\bf{1}}_{\{X_{e(q)}>y\}}
\right]
=\frac{q}{p+q} \left(\mathbb P_x\left(X_{e(p+q)}> y\right)+\hat{J}_1(b-x;y-b)\right),
\end{split}
\]
where
\begin{equation}
\hat{J}_1(x;y-b)=\int_{-\infty}^{x}\hat{F}_1(x-z+y-b)d\hat{K}_{q}(z), \ \ x \in \mathbb R.
\end{equation}
Here, in (3.25), $\hat{K}_{q}(x)$ is the convolution of the probability distribution functions
of $\underline{X}_{e(p+q)}$ and $\overline{X}_{e(q)}$ under $\mathbb P$;  $\hat{F}_1(x)$  is
a continuous function on $(0,\infty)$ and satisfies
\begin{equation}
\begin{split}
&\int_{0}^{\infty} e^{-s x}\hat{F}_1(x)dx=\frac{1}{s}
\left(\frac{\mathbb E\left[e^{-s \overline{X}_{e(p+q)}}\right]}
{\mathbb E\left[e^{-s \overline{X}_{e(q)}}\right]}-1\right), \ \ s > 0.
\end{split}
\end{equation}

(ii) For $p,q > 0$ and $y \leq  b$,
\[
\begin{split}
&\mathbb E_{x}\left[e^{-p\int_{0}^{e(q)}\rm{\bf{1}}_{\{X_s < b\}}ds}
\rm{\bf{1}}_{\{X_{e(q)}< y\}}\right]=\frac{q}{p+q}\left(
\mathbb P_x\left(X_{e(p+q)} < y\right)+\hat{J}_2(x-b;b-y)\right),
\end{split}
\]
where
\begin{equation}
\hat{J}_2(x;b-y)=\int_{-\infty}^{x}\hat{F}_2(x-z-y+b)d \hat{L}_{q}(z), \ \ x \in \mathbb R.
\end{equation}
In (3.27), $\hat{L}_{q}(x)$ is the convolution of the probability distribution functions
of $-\underline{X}_{e(q)}$ and $-\overline{X}_{e(p+q)}$ under $\mathbb P$; $\hat{F}_2(x)$
is a continuous function on $(0,\infty)$ and satisfies
\begin{equation}
\begin{split}
\int_{0}^{\infty}e^{-s x}\hat{F}_2(x)dx
&=\frac{1}{s}\left(\frac{\mathbb E\left[e^{s \underline{X}_{e(p+q)}}\right]}
{\mathbb E\left[e^{s \underline{X}_{e(q)}}\right]}-1\right),\ \ s> 0.
\end{split}
\end{equation}
\end{Corollary}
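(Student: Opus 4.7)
The plan is to reduce both parts of Corollary 3.2 to Theorem 3.1 and Corollary 3.1 respectively via a complementary-occupation-time identity combined with a rescaling of the exponential clock.

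For part (i), since $X$ is not a compound Poisson process, Lemma 2.1 gives $\mathbb P(X_s = b)=0$ for Lebesgue almost every $s$, hence
$$\int_0^t \mathbf{1}_{\{X_s > b\}}\,ds = t - \int_0^t \mathbf{1}_{\{X_s \leq b\}}\,ds \quad\text{a.s.}$$
I would condition on $e(q)=t$, pull the factor $e^{-pt}$ out of the expectation, and absorb it into the density $q e^{-qt}$ of $e(q)$ to obtain $q e^{-(p+q)t}$, which is $\frac{q}{p+q}$ times the density of $e(p+q)$. The identity then reads
$$\mathbb E_x\!\left[e^{-p\int_0^{e(q)}\mathbf{1}_{\{X_s>b\}}ds}\mathbf{1}_{\{X_{e(q)}>y\}}\right]=\frac{q}{p+q}\,\mathbb E_x\!\left[e^{p\int_0^{e(p+q)}\mathbf{1}_{\{X_s\leq b\}}ds}\mathbf{1}_{\{X_{e(p+q)}>y\}}\right].$$
The right-hand expectation is precisely $V_{q'}(x)$ from (3.1) with parameters $(p',q')=(-p,p+q)$, and the admissibility condition $p'>-q'$ of Theorem 3.1 reduces to $q>0$, which holds.

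Applying Theorem 3.1 with these shifted parameters, the kernel $K_{q'}$ in (3.4) becomes the convolution of $\underline{X}_{e(p+q)}$ and $\overline{X}_{e(q)}$, matching $\hat K_q$; and the Laplace transform characterization (3.5) with the shifted parameters becomes exactly (3.26), so the associated function equals $\hat F_1$. Substituting into (3.2)--(3.3) and multiplying by $q/(p+q)$ yields (i) after moving the $\mathbb P_x(X_{e(p+q)}>y)$ term to the right.

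Part (ii) is entirely parallel: using $\int_0^t \mathbf{1}_{\{X_s<b\}}\,ds = t - \int_0^t \mathbf{1}_{\{X_s\geq b\}}\,ds$ a.s., the same rescaling of the exponential clock reduces the expectation, up to the factor $q/(p+q)$, to the one handled by Corollary 3.1 with parameters $(p',q')=(-p,p+q)$. The resulting $L_{q'}$ and $F_2$ then coincide with $\hat L_q$ and $\hat F_2$ of (3.27)--(3.28). No step is genuinely hard; the only point that requires care is the bookkeeping verifying that the parameter shift $(p,q)\mapsto(-p,p+q)$ converts $(K_{q'},F_1)$ and $(L_{q'},F_2)$ exactly into the hatted objects in the statement, and that admissibility is preserved.
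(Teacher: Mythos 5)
Your proposal is correct and follows essentially the same route as the paper: write $\mathbf{1}_{\{X_s>b\}}=1-\mathbf{1}_{\{X_s\leq b\}}$ (respectively $\mathbf{1}_{\{X_s<b\}}=1-\mathbf{1}_{\{X_s\geq b\}}$), absorb the resulting factor $e^{-pt}$ into the exponential density to trade $e(q)$ for $\tfrac{q}{p+q}$ times $e(p+q)$, and then apply Theorem 3.1 (resp.\ Corollary 3.1) with the shifted parameters $(p',q')=(-p,p+q)$, whose admissibility $p'>-q'$ is exactly $q>0$ and whose kernel and Laplace-transform data become $\hat{K}_q,\hat{F}_1$ (resp.\ $\hat{L}_q,\hat{F}_2$). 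The only cosmetic point is that your appeal to Lemma 2.1 is unnecessary here, since $\{X_s>b\}$ is literally the complement of $\{X_s\leq b\}$, so the occupation-time identity is exact rather than almost sure.
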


\begin{proof}
Note first that
\begin{equation}
\mathbb E_x\left[e^{-p\int_{0}^{t}\rm{\bf{1}}_{\{X_s > b\}}ds}\rm{\bf{1}}_{\{X_{t}>y\}}\right]=
e^{-pt}\mathbb E_x\left[e^{-(-p)\int_{0}^{t}\rm{\bf{1}}_{\{X_s \leq  b\}}ds}
\rm{\bf{1}}_{\{X_{t}>y\}}\right].
\end{equation}
Then it holds that
\begin{equation}
\begin{split}
&\mathbb E_x\left[e^{-p\int_{0}^{e(q)}\rm{\bf{1}}_{\{X_{e(q)} > b\}}ds}
\rm{\bf{1}}_{\{X_{e(q)}>y\}}\right]\\
&=q\int_{0}^{\infty}e^{-qt}
e^{-pt}\mathbb E_x\left[e^{p\int_{0}^{t}\rm{\bf{1}}_{\{X_s \leq  b\}}ds}
\rm{\bf{1}}_{\{X_{t}>y\}}\right]dt\\
&=\frac{q}{p+q}\mathbb E_x\left[e^{-(-p)\int_{0}^{e(p+q)}\rm{\bf{1}}_{\{X_{s} \leq b\}}ds}
\rm{\bf{1}}_{\{X_{e(p+q)}>y\}}\right],
\end{split}
\end{equation}
which combined with (3.2) and (3.5), gives the results in the first part. The derivation of
the second part is similar and thus we omit the details.
\qed
\end{proof}

\begin{Remark}
Since Lemma 2.1 holds, the item $\int_{0}^{e(q)}\textbf{1}_{\{X_s < b\}}ds$ in (3.27) can be
rewritten as $\int_{0}^{e(q)}\textbf{1}_{\{X_s \leq b\}}ds$. A similar result holds for the
quantity $\int_{0}^{e(q)}\textbf{1}_{\{X_s > b\}}ds$ in (3.25).
\end{Remark}
\begin{Remark}
Although it is assumed that $p,q>0$ in Corollary 3.2, one can verify that this corollary also holds for $q>0$ and $p>-q$ by using a similar derivation in Theorem 3.1. The reason why we focus on the case  $p,q>0$ in Corollary 3.2 is that now it is a straightforward result of Theorem 3.1 and Corollary 3.1.
\end{Remark}

Similar to Remark 3.1, for the three functions $F_2(x)$ in (3.19), $\hat{F}_1(x)$ in (3.26)
and $\hat{F}_2(x)$ in (3.28), we have expressions for $F_2(dx)$, $\hat{F}_{1}(dx)$ and
$\hat{F}_2(dx)$ for $x > 0$. Particularly, Theorem 3.1 and Corollaries 3.1 and 3.2 will give
us the following result.

\begin{Corollary}
(i) For $p, q > 0$, we have
\begin{equation}
\begin{split}
&\mathbb E_x\left[e^{-p\int_{0}^{e(q)}\rm{\bf{1}}_{\{X_s \leq b\}}ds}
\rm{\bf{1}}_{\{X_{e(q)} \in dy\}}\right]\\
&=\left\{\begin{array}{cc}
\mathbb P_x\left(X_{e(q)}\in dy\right)- \int_{-\infty}^{b-x}F_1(dy-x-z)dK_{q}(z),& y \geq b,\\
\frac{q}{\xi}\left(\mathbb P_x\left(X_{e(\xi)} \in dy\right)-\int_{-\infty}^{x-b}
\hat{F}_2(x-dy-z)d \hat{L}_{q}(z)\right),& y \leq  b,
\end{array}\right.
\end{split}
\end{equation}
where $\xi=p+q$.

(ii) For $p, q > 0$, we have
\begin{equation}
\begin{split}
&\mathbb E_{x}\left[e^{-p\int_{0}^{e(q)}\rm{\bf{1}}_{\{X_s \geq b\}}ds}
\rm{\bf{1}}_{\{X_{e(q)}\in dy\}}\right]\\
&=\left\{\begin{array}{cc}
\mathbb P_x\left(X_{e(q)} \in dy\right)- \int_{-\infty}^{x-b}F_2(x-dy-z)d L_{q}(z), &y \leq  b,\\
\frac{q}{\xi}\left(\mathbb P_x\left(X_{e(\xi)} \in  dy\right)-\int_{-\infty}^{b-x}
\hat{F}_1(dy-x-z)d\hat{K}_{q}(z)\right), & y \geq b.
\end{array}\right.
\end{split}
\end{equation}
\end{Corollary}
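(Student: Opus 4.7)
The plan is to obtain Corollary 3.3 by Stieltjes-differentiating in the variable $y$ the integrated identities already established in Theorem 3.1, in Corollary 3.1 of Section 3, and in Corollary 3.2. On each of those identities the left-hand side is a bounded, continuous, monotone function of $y$, and so determines a signed Radon measure in $y$; identifying the corresponding measures on the right-hand side yields (3.31) and (3.32). The four lines of the corollary come from four different parents: (3.31) top from Theorem 3.1, (3.31) bottom from Corollary 3.2(ii), (3.32) top from Section 3's Corollary 3.1, and (3.32) bottom from Corollary 3.2(i).

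For the first line of (3.31), covering $y \geq b$, I would start from Theorem 3.1 and rewrite $J_1(b-x;y-b) = \int_{-\infty}^{b-x} F_1(y-x-z)\,dK_q(z)$. Taking the $-y$-Stieltjes differential of the identity $V_q(x) = \mathbb P_x(X_{e(q)} > y) + J_1(b-x;y-b)$, the left-hand side produces the target measure $\mathbb E_x[e^{-p\int_0^{e(q)}\mathbf{1}_{\{X_s \leq b\}}ds}\mathbf{1}_{\{X_{e(q)} \in dy\}}]$, the term $\mathbb P_x(X_{e(q)} > y)$ produces $\mathbb P_x(X_{e(q)} \in dy)$, and the differential of $J_1$ may be pulled inside the $dK_q$-integral to yield $-\int_{-\infty}^{b-x} F_1(dy-x-z)\,dK_q(z)$. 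For the second line, $y \leq b$, I would invoke Corollary 3.2(ii), first using Lemma 2.1 (as in Remark 3.4) to replace $\{X_s < b\}$ by $\{X_s \leq b\}$ inside the occupation integral. Differentiating in $+y$ and applying the chain rule to $\hat{J}_2(x-b;b-y) = \int_{-\infty}^{x-b} \hat{F}_2(x-y-z)\,d\hat{L}_q(z)$ supplies the minus sign that appears as $-\int_{-\infty}^{x-b} \hat{F}_2(x-dy-z)\,d\hat{L}_q(z)$ in the statement; the factor $q/\xi$ is inherited directly from Corollary 3.2(ii). Part (ii) is handled by the same procedure, now using Section 3's Corollary 3.1 for $y \leq b$ and Corollary 3.2(i) for $y \geq b$; equivalently, one may apply part (i) to the dual process $-X$ and relabel, which swaps $(F_1,K_q,\hat{F}_2,\hat{L}_q)$ with $(F_2,L_q,\hat{F}_1,\hat{K}_q)$.

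The one technical point requiring care is the legitimacy of pulling the $y$-differential inside the convolution integrals defining $J_1, \hat{J}_1, J_2, \hat{J}_2$. This rests on two ingredients already in place: Proposition 3.1 (and its analog for the dual process) gives continuity of $K_q, \hat{K}_q, L_q, \hat{L}_q$ on all of $\mathbb R$, while Remark 3.1 together with (3.6), (3.24), and the uniform bounds of Proposition 3.2 shows that each of $F_1, F_2, \hat{F}_1, \hat{F}_2$ admits a bounded-variation signed-measure differential $F_i(dw)$ on $(0,\infty)$. With these two facts the Stieltjes differentiation term-by-term is routine, and Lemma 2.1 removes any potential atom at the common endpoint $y = b$, guaranteeing that the two halves of (3.31) (resp.\ (3.32)) are consistent there. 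No new analytic input is required beyond what Theorem 3.1 has already bought us.
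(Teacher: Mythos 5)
Your proposal is correct and follows essentially the route the paper intends: the paper states Corollary 3.3 as a direct consequence of Theorem 3.1 and Corollaries 3.1--3.2, obtained by taking the $y$-differential of those identities with the measures $F_1(dx)$, $F_2(dx)$, $\hat{F}_1(dx)$, $\hat{F}_2(dx)$ supplied by Remark 3.1 and its analogues, exactly as you do (including the use of Remark 3.4 to pass from $\{X_s<b\}$ to $\{X_s\leq b\}$). Your additional care about pulling the Stieltjes differential inside the convolution, using Proposition 3.1 and the boundedness from Proposition 3.2, only makes explicit what the paper leaves implicit.
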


\begin{Remark}
To obtain closed-form formulas for $\mathbb E_x\left[e^{-p\int_{0}^{e(q)}
\rm{\bf{1}}_{\{X_s \leq b\}}ds}\rm{\bf{1}}_{\{X_{e(q)} \in dy\}}\right]$ in (3.31), it is
enough to know the distributions of $\overline{X}_{e(q)}$ and $\underline{X}_{e(q)}$ for
any $q>0$, because these two distributions determine the distribution of $X_{e(q)}$
(which is due to the Wiener-Hopf factorization; see, e.g., Theorem 6.16 in [10]) and
other quantities, i.e., $F_1(x)$, $\hat{F}_2(x)$, $K_q(x)$ and $\hat{L}_q(x)$.
The distributions of $\overline{X}_{e(q)}$ and $\underline{X}_{e(q)}$ have been investigated
considerably, the reader can refer to [7,9].
\end{Remark}

\begin{Remark}
It follows from (3.14) that
\[
\int_{0}^{\infty} e^{i \phi x}F_1(dx)+F_1(0)+1
=\mathbb E\left[e^{i \phi \overline{X}_{e(q)}}\right]/
\mathbb E\left[e^{i \phi \overline{X}_{e(p+q)}}\right],\ \ for \ \ \phi \in \mathbb R.
\]
which combined with the definition of $K_q(x)$ in (3.4), gives
\[
\begin{split}
&\int_{-\infty}^{\infty}e^{i \phi x} \int_{-\infty}^{x}F_1(dx-z)dK_{q}(z)=\int_{0}^{\infty}e^{i \phi x}F_1(dx)\int_{-\infty}^{\infty}e^{i \phi x}dK_{q}(x)\\
&=\mathbb E\left[e^{i\phi X_{e(q)}}\right]-\left(F_1(0)+1\right)\int_{-\infty}^{\infty}e^{i \phi x}dK_{q}(x),\ \ \ \phi \in \mathbb R,
\end{split}
\]
where in the second equality, we have used the known Wiener-Hopf factorization (see Remark 3.3).
Therefore, for $x \in \mathbb R$, the last formula produces
\[
\int_{-\infty}^{x}F_1(dx-z)dK_{q}(z)=\mathbb P\left(X_{e(q)} \in dx\right)-(F_1(0)+1)K_q(dx).
\]

Similarly, we can derive that
\[
\begin{split}
&\int_{-\infty}^{x}\hat{F}_2(dx-z)d\hat{L}_{q}(z)=
\mathbb P\left(X_{e(p+q)} \in - dx\right)-(\hat{F}_2(0)+1)\hat{L}_q(dx),\\
&\int_{-\infty}^{x}\hat{F}_1(dx-z)d\hat{K}_{q}(z)=\mathbb P\left(X_{e(p+q)} \in dx\right)-(\hat{F}_1(0)+1)\hat{K}_q(dx),
\end{split}
\]
and
\[
\int_{-\infty}^{x}F_2(dx-z)dL_{q}(z)=\mathbb P\left(X_{e(q)} \in - dx\right)-(F_2(0)+1)L_q(dx).
\]
\end{Remark}

\begin{Remark}
From the above remark, we can write (3.31) and (3.32) as
\begin{equation}
\begin{split}
&\mathbb E_x\left[e^{-p\int_{0}^{e(q)}\rm{\bf{1}}_{\{X_s \leq b\}}ds}
\rm{\bf{1}}_{\{X_{e(q)} \in dy\}}\right]\\
&=\left\{\begin{array}{cc}
(F_1(0)+1)K_q(dy-x)+\int_{b-x}^{y-x}F_1(dy-x-z)dK_{q}(z),& y \geq b,\\
\frac{q}{p+q}\left((\hat{F}_2(0)+1)\hat{L}_q(x-dy)+\int_{x-b}^{x-y}
\hat{F}_2(x-dy-z)d \hat{L}_{q}(z)\right),& y \leq  b,
\end{array}\right.
\end{split}
\end{equation}
and
\begin{equation}
\begin{split}
&\mathbb E_{x}\left[e^{-p\int_{0}^{e(q)}\rm{\bf{1}}_{\{X_s \geq b\}}ds}
\rm{\bf{1}}_{\{X_{e(q)}\in dy\}}\right]\\
&=\left\{\begin{array}{cc}
(F_2(0)+1)L_q(x-dy)+\int_{x-b}^{x-y}F_2(x-dy-z)d L_{q}(z), &y \leq  b,\\
\frac{q}{p+q}\left((\hat{F}_1(0)+1)\hat{K}_q(dy-x)+\int_{b-x}^{y-x}
\hat{F}_1(dy-x-z)d\hat{K}_{q}(z)\right), & y \geq b,
\end{array}\right.
\end{split}
\end{equation}
where $q,p>0$.
\end{Remark}

\section{Proof of Theorem 3.1 in a dense subclass}
\label{sec:4}
In this section, the process $X=(X_t)_{t \geq 0}$ is assumed to be a L\'evy process with
Gaussian component and its jumps have rational Laplace transform, and one can refer to,
 e.g., Lewis and Mordecki [14] for the investigation on such processes. In specific,
  the process $X$ is given by
\begin{equation}
  X_t = X_0 + \mu t+\sigma W_t + \sum_{k=1}^{N^+_t}Z^+_k-\sum_{k=1}^{N^-_t}Z^-_k,
\end{equation}
where $\mu$ and $X_0$ are constants; $(W_t)_{t\geq 0}$ is a standard Brownian motion
with $W_0=0$, and $\sigma > 0$ is the volatility of the diffusion; $(N^+_t)_{t\geq 0}$ is
 a Poisson process with rate $\lambda^+$, and $(N^-_t)_{t\geq 0}$ is a Poisson process with
  rate $\lambda^-$; $Z^+_k$ $\left(Z_k^-\right)$,  $k=1, 2,...$, are independent and
  identically distributed random variables; moreover, $(W_t)_{t\geq 0}$, $(N^+_t)_{t\geq 0}$,
   $(N^-_t)_{t\geq 0}$, $\{Z^+_k; k=1,2,\ldots\}$ and $\{Z^-_k; k=1,2,\ldots\}$ are
   independent mutually; finally, the density functions of $Z_1^+$ and  $Z_1^-$ are given
   respectively by
\begin{equation}
p^+(z)=\sum_{k=1}^{m^+}\sum_{j=1}^{m_k}c_{kj}\frac{(\eta_k)^jz^{j-1}}{(j-1)!}
e^{-\eta_k z}, \ \ z > 0,
\end{equation}
and
\begin{equation}
p^-(z)=\sum_{k=1}^{n^-}\sum_{j=1}^{n_k}d_{kj}\frac{(\vartheta_k)^jz^{j-1}}{(j-1)!}
e^{-\vartheta_k z}, \ \ z > 0.
\end{equation}
Besides, it is assumed that $\eta_i \neq \eta_j$ and $\vartheta_i \neq \vartheta_j$
for $i \neq j$.

The following Lemma 4.1 is a combination of Lemma 1.1 in [14]
and Proposition 1 (v) in [8]. It characterizes the roots of $\psi(z)=q$  with
\begin{equation}
\begin{split}
\psi(z):
&= \ln\left(\mathbb E\left[e^{iz X_1}\right]\right)=
\lambda^+\left(\sum_{k=1}^{m^+}\sum_{j=1}^{m_k}c_{kj}\left(\frac{\eta_k}{\eta_k-iz}\right)^j-1\right)\\
&+  iz\mu -\frac{\sigma^2}{2}z^2+\lambda^-\left(\sum_{k=1}^{n^-}\sum_{j=1}^{n_k}d_{kj}
\left(\frac{\vartheta_k}{\vartheta_k+iz}\right)^j-1\right),\ \ z \in \mathbb R,
\end{split}
\end{equation}
where  $\sigma > 0$.

\begin{Lemma}
(i) For almost all $q > 0$, the equation $\psi(z)= q$ has, in the set $Im(z) < 0$, a total
of $M=\sum_{k=1}^{m^+}m_k+1$ distinct simple solutions $-i\beta_{1,q}$, $-i\beta_{2,q}$,
 $\ldots$, $-i\beta_{M,q}$, ordered such that
\begin{equation}
0 < \beta_{1,q}< Re(\beta_{2,q})\leq \cdots \leq Re(\beta_{M,q}).
\end{equation}

(ii) For almost all $q > 0$, the equation $\psi(z) = q$ has, in the set $Im(z) > 0$, a total
of $N= \sum_{k=1}^{n^-}n_k+1$ distinct simple roots $i\gamma_{1,q}$, $i\gamma_{2,q}$, $\ldots$,
 $i\gamma_{N,q}$, ordered such that
\begin{equation}
0< \gamma_{1,q}< Re(\gamma_{2,q})\leq \cdots \leq Re(\gamma_{N,q}).
\end{equation}

(iii) There are only finite numbers of $q > 0$ such that $\psi(z)= q$ has a root with
multiplicity larger than one.
\end{Lemma}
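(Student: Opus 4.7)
The plan is to clear denominators so that $\psi(z)=q$ becomes a polynomial equation, and then count roots in each open half-plane via the argument principle. Substitute $w=iz$, under which $\{\mathrm{Im}(z)<0\}\leftrightarrow\{\mathrm{Re}(w)>0\}$ and the putative roots $z=-i\beta_{j,q}$ correspond to $w=\beta_{j,q}$. A direct computation gives
\[
\widetilde{\Psi}(w):=\psi(-iw)=\mu w+\tfrac{\sigma^{2}}{2}w^{2}+\lambda^{+}\!\sum_{k,j}c_{kj}\!\left(\tfrac{\eta_{k}}{\eta_{k}-w}\right)^{\!j}\!-\lambda^{+}+\lambda^{-}\!\sum_{k,j}d_{kj}\!\left(\tfrac{\vartheta_{k}}{\vartheta_{k}+w}\right)^{\!j}\!-\lambda^{-},
\]
a meromorphic function with poles of order $m_{k}$ at $\eta_{k}>0$ and of order $n_{k}$ at $-\vartheta_{k}<0$. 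Multiplying $q-\widetilde{\Psi}(w)$ by $\prod_{k}(\eta_{k}-w)^{m_{k}}\prod_{k}(\vartheta_{k}+w)^{n_{k}}$ yields a polynomial $P_{q}(w)$ of degree $2+(M-1)+(N-1)=M+N$, so $P_{q}$ has exactly $M+N$ zeros in $\mathbb{C}$ counted with multiplicity.

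To split this count across the two half-planes I would apply the argument principle to $f:=q-\widetilde{\Psi}$ on the boundary of the right half-disk $\{|w|<R,\,\mathrm{Re}(w)>0\}$. On the imaginary axis, $\mathrm{Re}\,f(it)=q-\mathrm{Re}\,\psi(t)\geq q>0$, because $\mathrm{Re}\,\psi(t)=\log|\mathbb{E}[e^{itX_{1}}]|\leq 0$; thus $f$ is non-vanishing there, both endpoints $f(\pm iR)$ are asymptotic to $+\tfrac{\sigma^{2}}{2}R^{2}$, and since the path stays in the right half-plane it cannot wind about the origin, so the change of $\arg f$ along this leg tends to $0$ as $R\to\infty$. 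On the semicircle $|w|=R$ one has $f(w)\sim -\tfrac{\sigma^{2}}{2}w^{2}$ uniformly, contributing a change of $+2\pi$ to $\arg f$. Hence $Z-P=1$; since the pole count inside is $P=\sum_{k}m_{k}=M-1$, we obtain $Z=M$. The analogous argument for the left half-disk (equivalently, the same result applied to the dual process $-X$) yields $N$ zeros in $\{\mathrm{Re}(w)<0\}$, saturating the $M+N$ total.

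For the ordering in (i), observe that $\widetilde{\Psi}$ is real-analytic and strictly convex on $(0,\eta_{1})$ since $\widetilde{\Psi}''(w)=\sigma^{2}+\text{(positive rational terms)}>0$ there; combined with $\widetilde{\Psi}(0)=0$ and $\widetilde{\Psi}(w)\uparrow\infty$ as $w\uparrow\eta_{1}$, this forces a unique real root $\beta_{1,q}\in(0,\eta_{1})$, and in fact $\widetilde{\Psi}(s)<q$ for every $s\in[0,\beta_{1,q})$. To show that every other right-half-plane root has strictly larger real part, I invoke the bound $\mathrm{Re}\,\widetilde{\Psi}(w)\leq\widetilde{\Psi}(\mathrm{Re}\,w)$ on $0<\mathrm{Re}\,w<\eta_{1}$, coming from $|\mathbb{E}[e^{wX_{1}}]|\leq\mathbb{E}[e^{\mathrm{Re}(w)X_{1}}]$, which is strict whenever $w$ is non-real (the Gaussian component of $X$ makes $X_{1}$ non-atomic, so $e^{i\,\mathrm{Im}(w)X_{1}}$ is not a.s.\ a phase). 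For any $w$ with $0<\mathrm{Re}\,w\leq\beta_{1,q}$ and $w\notin\mathbb{R}$ this gives $\mathrm{Re}\,\widetilde{\Psi}(w)<\widetilde{\Psi}(\mathrm{Re}\,w)\leq q$, so $\widetilde{\Psi}(w)\neq q$. The analogous statement for $\gamma_{1,q}$ follows by applying this reasoning to $-X$.

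For (iii), the coefficients of $P_{q}(w)$ are affine functions of $q$, so its $w$-discriminant $D(q)$ is a polynomial in $q$; $P_{q}$ has a multiple root precisely when $D(q)=0$. It therefore suffices to exhibit a single $q>0$ for which all $M+N$ roots are distinct, e.g.\ a large $q$, where asymptotic analysis of $\widetilde{\Psi}(w)=q$ localizes the roots: $M-1$ of them near the poles $\eta_{k}$, $N-1$ near $-\vartheta_{k}$, and two further roots near $\pm(2q/\sigma^{2})^{1/2}$. This shows $D\not\equiv 0$, so $D$ has only finitely many zeros in $(0,\infty)$. The main technical obstacle in the plan is the argument-principle step: one must justify both the uniform $O(R^{2})$ asymptotic of $f$ on the large semicircle (with the finitely many real poles $\eta_{k}$ lying inside the contour) and the vanishing of the imaginary-axis contribution to $\arg f$; both are routine but require care.
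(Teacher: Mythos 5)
Your proposal is sound, but note that the paper itself does not prove Lemma 4.1 at all: it is imported verbatim as a combination of Lemma 1.1 in Lewis--Mordecki [14] and Proposition 1\,(v) in Kuznetsov [8], so there is no internal proof to compare against. What you have reconstructed is essentially the standard argument behind those citations: after the substitution $w=iz$, the root count in each half-plane via the argument principle (using $\mathrm{Re}\,\psi(t)=\log|\mathbb{E}[e^{itX_1}]|\le 0$ on the axis and the dominance of $\tfrac{\sigma^2}{2}w^2$ on the large semicircle, which is where $\sigma>0$ enters), the identification and strict minimality of the real root $\beta_{1,q}$ via convexity of the cumulant generating function together with the strict inequality $|\mathbb{E}[e^{wX_1}]|<\mathbb{E}[e^{\mathrm{Re}(w)X_1}]$ for non-real $w$, and, for part (iii), the observation that the cleared-denominator polynomial $P_q$ has coefficients affine in $q$, so its discriminant is a polynomial in $q$ that is not identically zero (witnessed by the large-$q$ localization of roots). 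Two small points deserve tightening if you were to write this out in full. First, the existence of the real root $\beta_{1,q}\in(0,\eta_1)$ uses that $\widetilde{\Psi}(w)\uparrow\infty$ as $w$ approaches the abscissa of convergence along the reals; this is automatic when the $\eta_k$ are real, and in the general rational-transform case (complex $\eta_k$ allowed) it follows from the Pringsheim/Widder fact that the abscissa of convergence of the Laplace transform of the nonnegative density $p^+$ is itself a (real) pole, so the blow-up still occurs. Second, both the pole count $P=M-1$ inside the contour and the ``$m_k$ distinct roots clustering at $\eta_k$ for large $q$'' step tacitly assume the poles have exact orders $m_k$, i.e.\ $c_{k m_k}\neq 0$ and $d_{k n_k}\neq 0$; this is implicit in the model specification but should be stated. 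With these remarks your argument is complete and matches the route taken in the cited sources.
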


From now on, we denote by $\mathbb Q$ the set of $q > 0$ such that the equation $\psi(z) = q$
 only has simple roots.

Our objection in this section is proving that Theorem 3.1 holds for the process $X$ in (4.1),
and this will be done in Subsections 4.1 and 4.2.

\subsection{The case of $q, p+q \in \mathbb Q$}
\label{sec:4.1}
In this subsection, we want to show that Theorem 3.1 holds for $X$ given by (4.1) and
$q, p+q \in \mathbb Q$. First, for $y > b$ and $q, p+q \in \mathbb Q$, the expression for
$V_q(x)=\mathbb E_x\left[e^{-p\int_{0}^{e(q)}\textbf{1}_{\{X_s \leq b\}}ds}
\textbf{1}_{\{X_{e(q)} > y\}}\right]$ with $X$ in (4.1) is summarized in Proposition 4.1,
 whose proof is left to the Appendix.

\begin{Proposition}
For $X$ in (4.1), $y>b$, and $q, p+q \in \mathbb Q$,
\begin{equation}
\begin{split}
&V_q(x)=
\left\{\begin{array}{cc}
\sum_{k=1}^{M}U_{k}e^{\beta_{k,p+q}(x-b)},& x < b, \\
\sum_{k=1}^{M}H_{k}e^{\beta_{k,q}(x-y)}+\sum_{k=1}^{N}P_{k}
e^{\gamma_{k,q}(b-x)}, & b < x < y,\\
1+\sum_{k=1}^{N}Q_{k}e^{\gamma_{k,q}(y-x)}+\sum_{k=1}^{N}P_{k}
e^{\gamma_{k,q}(b-x)}, & x > y,
\end{array}\right.
\end{split}
\end{equation}
where $H_k$ and $Q_k$ are given by (A.18) and (A.19); $U_k$ and $P_k$ are given by rational expansion:
\begin{equation}
\begin{split}
&\sum_{i=1}^{M}\frac{U_i}{x-\beta_{i,p+q}}-
\sum_{i=1}^{N}\frac{P_i}{x+\gamma_{i,q}}-\sum_{i=1}^{M}\frac{H_i}{x-\beta_{i,q}}e^{\beta_{i,q}(b-y)}\\
&=\frac{\prod_{k=1}^{m^+}(x-\eta_k)^{m_k}\prod_{k=1}^{n^-}(x+\vartheta_k)^{n_k}}
{\prod_{i=1}^{M}(x-\beta_{i,p+q})\prod_{i=1}^{N}(x+\gamma_{i,q})}\times\\
&\sum_{k=1}^{M}
\frac{\prod_{i=1}^{M}(\beta_{k,q}-\beta_{i,p+q})\prod_{i=1}^{N}(\beta_{k,q}+\gamma_{i,q})}
{\prod_{i=1}^{m^+}(\beta_{k,q}-\eta_i)^{m_i}\prod_{i=1}^{n^-}(\beta_{k,q}+\vartheta_i)^{n_i}}
\frac{-H_k}{x-\beta_{k,q}}e^{\beta_{k,q}(b-y)}.
\end{split}
\end{equation}
\end{Proposition}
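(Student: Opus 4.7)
The overall plan is to combine the Feynman--Kac representation with an exponential ansatz that is natural for L\'evy processes whose jumps have rational Laplace transforms. Since $e(q)$ is independent of $X$ with rate $q$, the function $V_q$ is a resolvent of the associated Feynman--Kac semigroup and formally solves the integro-differential equation
\[
(\mathcal{L} - q - p\,\mathbf{1}_{(-\infty, b]})\,V_q(x) = -q\,\mathbf{1}_{(y, \infty)}(x),
\]
where $\mathcal{L}$ is the infinitesimal generator of $X$. The piecewise constant potential and the indicator source split $\mathbb{R}$ into the three regions $(-\infty, b)$, $(b, y)$, and $(y, \infty)$; on each of them $V_q$ satisfies a homogeneous linear equation $(\mathcal{L} - c)V = 0$ with $c = p+q$ on the first region and $c = q$ on the other two, and the particular solution $V \equiv 1$ absorbs the source on $(y, \infty)$.

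Exponentials $e^{\alpha x}$ solve $(\mathcal{L} - c)V = 0$ precisely when $\psi(-i\alpha) = c$, so by Lemma 4.1 the admissible exponents are exactly $\{\beta_{k, c}\}$ and $\{-\gamma_{k, c}\}$. Boundedness of $V_q$ at $\pm \infty$ forces one to keep only the $\beta_{k, p+q}$ on $(-\infty, b)$ and only the $\gamma_{k, q}$ on $(y, \infty)$, while both families with $c = q$ are admissible on $(b, y)$. This yields precisely the ansatz (4.7) with free coefficients $U_k, H_k, P_k, Q_k$. I would first determine $H_k$ and $Q_k$ from the matching at $x = y$: this interface sits inside $\{x > b\}$, so only the $\psi = q$ family is involved and the matching decouples, giving the closed form (A.18)--(A.19). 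The remaining coefficients $U_k, P_k$ are fixed by matching at $x = b$, where the potential jumps, and this matching is encoded compactly by the partial-fraction identity (4.8): both sides are rational functions with the same prescribed pole set, and equating their principal parts at $\beta_{i, p+q}$ and at $-\gamma_{i, q}$ respectively reads off $U_k$ and $P_k$.

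The main obstacle is translating the analytic matching across $x = b$ (continuity plus $C^1$-regularity from the Brownian component, together with smoothness of the jump integrals against $p^\pm$) into the rational identity (4.8). The key point is that the right-hand side of (4.8) has been designed so that its only finite poles in the variable $x$ lie at the $\beta_{i, p+q}$ and the $-\gamma_{i, q}$; the extra numerator factors $\prod_k (x - \eta_k)^{m_k}\prod_k (x + \vartheta_k)^{n_k}$ are precisely what cancels the spurious poles introduced by the Laplace transforms of the jump densities in (4.2)--(4.3). Verifying this cancellation and then reading off residues is where the combinatorial bookkeeping with the multiplicities $(m_k, n_k)$ is heaviest; the standing assumption $q, p+q \in \mathbb{Q}$, which forces all characteristic roots to be simple, keeps the resulting linear system non-degenerate and makes the residue computation straightforward.
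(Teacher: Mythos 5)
Your route (Feynman--Kac resolvent equation plus piecewise-exponential ansatz and interface matching) is genuinely different from the paper's, which never writes down a generator equation: the paper applies the strong Markov property at $\tau_b^{+}$ and $\tau_b^{-}$, uses the Lewis--Mordecki overshoot/Wiener--Hopf identities (Lemmas A.1--A.2) to get the piecewise-exponential form, computes $H_k,Q_k$ \emph{explicitly} from the Wiener--Hopf factorization of $\mathbb P_x\bigl(X_{e(q)}>y,\ \underline{X}_{e(q)}\geq b\bigr)$ (formulas (A.17)--(A.19)), and then extracts $U_k,P_k$ from Laplace-transformed consistency relations (A.29), (A.33) that yield the root conditions (A.34)--(A.35) and hence the rational identity (4.8). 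As it stands, your proposal has concrete gaps. First, the operator $\mathcal L$ is nonlocal, so the assertion that on each of the three regions the admissible solutions are spanned by the exponentials with $\psi(-i\alpha)=c$ is not a region-by-region fact: a piecewise assembly of such exponentials does not satisfy the equation on a region unless the jump integral over the \emph{other} regions produces no spurious terms $z^{j-1}e^{-\eta_k z}$, $z^{j-1}e^{\vartheta_k z}$; these "no spurious exponential" constraints are exactly the $\sum m_k+\sum n_k$ conditions (A.34)--(A.35), and they couple all three regions. In particular, your claim that the matching at $x=y$ "decouples" and yields (A.18)--(A.19) fails: jumps from $(b,y)$ and $(y,\infty)$ land below $b$, so the conditions near $y$ involve the coefficients $U_k$ as well; moreover, for $x>y$ the functions $e^{\gamma_{k,q}(y-x)}$ and $e^{\gamma_{k,q}(b-x)}$ are proportional, so no local matching can separate $Q_k$ from $P_k$ or force the same $P_k$ to appear in both regions --- that structure is supplied in the paper by the probabilistic computation (A.17), not by interface conditions.

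Second, even granting the ansatz, your argument is missing the verification step: you invoke the resolvent equation only "formally", and you neither prove that $V_q$ (a priori just a bounded measurable function) satisfies it with enough regularity, nor prove uniqueness of bounded solutions, nor supply the alternative martingale/It\^o verification that the constructed candidate equals $V_q$. Relatedly, continuity and $C^1$-fit at $b$ give only $2$ equations, while (4.8) encodes $M+N$ unknowns $U_k,P_k$; the remaining equations are precisely the nonlocal conditions above, so presenting (4.8) as "the matching at $x=b$" understates what must be proved --- in the paper the extra numerator factors $\prod_k(x-\eta_k)^{m_k}\prod_k(x+\vartheta_k)^{n_k}$ are \emph{derived} from (A.34)--(A.35) (each $\eta_k$ is a zero of multiplicity $m_k$ of the function $f$ in (A.36), each $-\vartheta_k$ of multiplicity $n_k$), together with $l_{M+1}=l_M=0$ from (A.22)--(A.23) and the residues (A.38), not assumed as a design feature whose consistency is then checked. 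To make your approach rigorous you would need (i) a verification/uniqueness lemma for the nonlocal equation, (ii) an independent derivation of $H_k,Q_k$ (e.g.\ via Wiener--Hopf, as the paper does), and (iii) an explicit derivation of the full linear system replacing the claimed decoupled matchings.
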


\begin{Remark}
The expressions for $U_k$ and $P_k$ can be easily obtained from (4.8), but here, we are not
interested in these expressions. Hence, the corresponding results are omitted for the sake
of brevity.
\end{Remark}

\begin{Remark}
We comment that $V_q(x)$ in (4.7) is continuous on $(-\infty,\infty)$. In fact, from (4.7),
 it is enough to show
\begin{equation}
V_q(b-)=V_q(b+) \ \ and \ \ V_q(y-)=V_q(y+).
\end{equation}
These two identities can be derived from (A.22) and the following result:
\begin{equation}
\sum_{i=1}^{M}H_i-\sum_{i=1}^{N}Q_i-1=0,
\end{equation}
which can be obtained from (A.24) by letting $\theta \uparrow \infty$.
\end{Remark}

\begin{Lemma}
For $X$ in (4.1), $y>b$, and $q, p+q \in \mathbb Q$, we have
\begin{equation}
\begin{split}
&\int_{-\infty}^{\infty}e^{-\phi(x-b)}\left(V_q(x)-\mathbb P_x\left(X_{e(q)}> y\right)\right)dx\\
&=\mathbb E\left[e^{\phi \underline{X}_{e(q)}}\right]\mathbb E\left[e^{\phi
\overline{X}_{e(p+q)}}\right]
\int_{0}^{\infty}F_0(x+y-b)e^{\phi x}dx, \ \ Re(\phi)=0,
\end{split}
\end{equation}
where
\begin{equation}
F_0(x)=\sum_{i=1}^{M}e^{-\beta_{i,q}x}\prod_{k=1}^{M}\frac{\beta_{i,q}-\beta_{k,p+q}}{\beta_{k,p+q}}
\prod_{k=1,k \neq i}^{M}\frac{\beta_{k,q}}{\beta_{i,q}-\beta_{k,q}}, \ \ x \geq 0.
\end{equation}
In addition,
\begin{equation}
\begin{split}
\int_{0}^{\infty} e^{-s x}F_0(x)dx
&=\sum_{i=1}^{M}\prod_{k=1}^{M}\frac{\beta_{i,q}-\beta_{k,p+q}}{\beta_{k,p+q}}
\prod_{k=1,k \neq i}^{M}\frac{\beta_{k,q}}{\beta_{i,q}-\beta_{k,q}}\frac{1}{\beta_{i,q}+s}\\
&=\frac{1}{s}\left(\frac{\mathbb E\left[e^{-s \overline{X}_{e(q)}}\right]}
{\mathbb E\left[e^{-s \overline{X}_{e(p+q)}}\right]}-1\right), \ \ s>0.
\end{split}
\end{equation}
\end{Lemma}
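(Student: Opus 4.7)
The plan is first to verify the Laplace-transform identity (4.13), and then to use it together with the explicit expression (4.7) for $V_q(x)$ to derive the Fourier identity (4.11). The crucial feature of the process class (4.1) is that, by the Lewis--Mordecki Wiener--Hopf factorization, the factor $\mathbb E[e^{-s\overline{X}_{e(q)}}]$ is a rational function of $s$ whose denominator is $\prod_{k=1}^M(\beta_{k,q}+s)$, whose numerator involves only the $\eta_k$ from the positive jumps, and whose value at $s=0$ is one. Consequently the ratio $\mathbb E[e^{-s\overline{X}_{e(q)}}]/\mathbb E[e^{-s\overline{X}_{e(p+q)}}]$ is rational in $s$: the $\eta_k$-factors cancel, only simple poles at $-\beta_{k,q}$ survive, and the ratio equals one at $s=0$.

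For (4.13), I would compute the Laplace transform of $F_0$ from (4.12) directly, obtaining a finite sum $\sum_i a_i/(s+\beta_{i,q})$. Matching this with the partial-fraction expansion of $\frac{1}{s}\bigl(\mathbb E[e^{-s\overline{X}_{e(q)}}]/\mathbb E[e^{-s\overline{X}_{e(p+q)}}]-1\bigr)$, which is analytic at $s=0$ thanks to the normalization, reduces to a residue computation at each $-\beta_{i,q}$. After telescoping the normalizing constants from the Wiener--Hopf factors, the residue becomes $\prod_{k}(\beta_{i,q}-\beta_{k,p+q})/\beta_{k,p+q}\cdot\prod_{k\neq i}\beta_{k,q}/(\beta_{i,q}-\beta_{k,q})$, exactly the coefficient of $e^{-\beta_{i,q}x}$ in (4.12). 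This verifies both equalities in (4.13).

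For (4.11), the idea is to compute the LHS by splitting the integral at $x=b$ and $x=y$ in accordance with (4.7). Each piece $\int e^{-\phi(x-b)}e^{\alpha(x-c)}dx$ on a half-line or finite interval contributes simple rational terms together with boundary factors of $e^{-\phi(y-b)}$. The subtracted probability $\mathbb P_x(X_{e(q)}>y)$ admits a parallel expansion, either by formally specializing (4.7) to $p=0$ or from the Wiener--Hopf decomposition $X_{e(q)}\stackrel{d}{=}\underline{X}_{e(q)}+\overline{X}_{e(q)}$; its subtraction ensures integrability of the integrand as $x\to\pm\infty$. On the RHS, I would use the explicit rational forms of $\mathbb E[e^{\phi\underline{X}_{e(q)}}]$ and $\mathbb E[e^{\phi\overline{X}_{e(p+q)}}]$ (with poles at $-\gamma_{k,q}$ and $\beta_{k,p+q}$ respectively) and compute $\int_0^\infty F_0(x+y-b)e^{\phi x}dx=\sum_i c_i e^{-\beta_{i,q}(y-b)}/(\beta_{i,q}-\phi)$ directly from (4.12). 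Equality is then checked by matching principal parts at each $\phi\in\{\beta_{k,p+q},-\gamma_{k,q},\beta_{k,q}\}$, together with the cancellation of the $e^{-\phi(y-b)}$ boundary terms.

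The main obstacle is precisely this matching. The cancellation of the $e^{-\phi(y-b)}$ terms arising from the integrals over $(b,y)$ and $(y,\infty)$ relies on identity (4.10) (i.e.\ $\sum_i H_i-\sum_i Q_i-1=0$), while the residue matching at the three pole families $\beta_{k,p+q}$, $-\gamma_{k,q}$, $\beta_{k,q}$ rests on the partial-fraction expansion (4.8), which is exactly the relation that defines $U_k$ and $P_k$ in terms of $H_k$. Substituting (4.8) into the LHS collapses the cross-terms to precisely the residue structure produced on the RHS by the Wiener--Hopf factors acting on $F_0$. A clean way to package the argument is to verify that both sides have identical principal parts at each pole together with the same limit as $|\phi|\to\infty$; rigidity of rational functions then forces the global equality, bypassing much of the explicit bookkeeping of boundary terms.
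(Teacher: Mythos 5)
Your proposal is correct and follows essentially the same route as the paper: (4.13) is verified by a partial-fraction/residue computation on the rational ratio of the Wiener--Hopf factors (the paper packages this via (A.1) and Lemma 4.3), and (4.11) is verified by explicitly transforming the piecewise-exponential expression (4.7) minus $\mathbb P_x\left(X_{e(q)}>y\right)$ and matching rational terms, with the exponential boundary contributions cancelling precisely through (4.10), (A.22) and the factorization identity (A.24), and the pole structure matched through the defining expansion (4.8). The only cosmetic difference is that the paper computes the Fourier--Stieltjes transform of $dV_q$ and then integrates by parts, whereas you integrate the difference directly.
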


\begin{proof}
From (4.7), for $Re(\phi) =0$, some direct calculations yield
\begin{equation}
\begin{split}
&\int_{-\infty}^{\infty}e^{-\phi(x-b)}dV_q(x)
=\sum_{i=1}^{M}\frac{U_i\phi}{\beta_{i,p+q}-\phi}
+\sum_{i=1}^{N}\frac{P_i\phi}{\phi+\gamma_{i,q}}
\\
&-\sum_{i=1}^{M}\frac{H_i\phi}{\beta_{i,q}-\phi}e^{\beta_{i,q}(b-y)}+e^{\phi(b-y)}
\left(\sum_{i=1}^{M}\frac{H_i \beta_{i,q}}{\beta_{i,q}-\phi}-\sum_{i=1}^{N}
\frac{Q_i \gamma_{i,q}}{\gamma_{i,q}+\phi}\right)\\
&=\phi\mathbb E\left[e^{\phi \underline{X}_{e(q)}}\right]\mathbb E\left[e^{\phi
\overline{X}_{e(p+q)}}\right]
\int_{0}^{\infty}F_0(x+y-b)e^{\phi x}dx\\
&+e^{\phi(b-y)}
\left(\sum_{i=1}^{M}\frac{H_i \beta_{i,q}}{\beta_{i,q}-\phi}-\sum_{i=1}^{N}
\frac{Q_i \gamma_{i,q}}{\gamma_{i,q}+\phi}\right)\\
&=\phi\mathbb E\left[e^{\phi \underline{X}_{e(q)}}\right]\mathbb E\left[e^{\phi
\overline{X}_{e(p+q)}}\right]
\int_{0}^{\infty}F_0(x+y-b)e^{\phi x}dx+e^{\phi (b-y)}\psi_q^+(-\phi)\psi_q^-(\phi),
\end{split}
\end{equation}
where $\psi_q^+(\cdot)$ and $\psi_q^-(\cdot)$ are given respectively by (A.1) and (A.4);
in the first equality, we have used the identity
$\sum_{i=1}^{M}\left(U_i-H_ie^{\beta_{i,q}(b-y)}\right)-\sum_{i=1}^{N}P_i=0$ (see (A.22));
the second equality follows from (4.8), (A.1), (A.4) and (A.18) with $F_0(x)$ given by (4.12);
the third one is due to (4.10) and (A.24).

For fixed $y$, we have
\begin{equation}
\begin{split}
&\int_{-\infty}^{\infty}e^{-\phi(x-y)}d \mathbb P_x\left(X_{e(q)}> y\right)
=\mathbb E\left[e^{\phi X_{e(q)}}\right]
\\
&=\mathbb E\left[e^{\phi \overline{X}_{e(q)}}\right]\mathbb E\left[e^{\phi \underline{X}_{e(q)}}\right]
=\psi_q^+(-\phi)\psi_q^-(\phi),
\end{split}
\end{equation}
where the second equality is due to the well-known Wiener-Hopf factorization (see, e.g.,
Theorem 6.16 in [10]). Then, from (4.15), applying integration by parts will lead to
\begin{equation}
\begin{split}
&\int_{-\infty}^{\infty}e^{-\phi(x-b)}dV_q(x)
-e^{\phi (b-y)}\psi_q^+(-\phi)\psi_q^-(\phi)\\
&
=\phi \int_{-\infty}^{\infty}e^{-\phi(x-b)}\left(V_q(x)-\mathbb P_x\left(X_{e(q)}> y\right)\right)dx.
\end{split}
\end{equation}

From (4.14) and (4.16),  we arrive at (4.11). The second equality in (4.13) is a direct result
 of (A.1) and (4.17) in the following Lemma 4.3.
 \qed
\end{proof}

\begin{Lemma}
For given constants $\tilde{x}_1, \ldots, \tilde{x}_{n}$, which satisfy
$\tilde{x}_i \neq \tilde{x}_j$ for $i \neq j$, it holds that
\begin{equation}
\sum_{i=1}^{n}\frac{\prod_{k=1}^{m}(\tilde{x}_i-\hat{x}_k)}{\prod_{k=1,k\neq i}^{n}
(\tilde{x}_i-\tilde{x}_k)}=0,
\end{equation}
where $m < n-1$ and  $\hat{x}_1, \ldots, \hat{x}_{m}$ are arbitrary constants.
\end{Lemma}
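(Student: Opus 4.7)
The plan is to recognize the sum as the sum of residues of a proper rational function, equivalently as the coefficient of $x^{n-1}$ in a Lagrange interpolation. Both viewpoints rest on the same elementary fact about partial fractions.

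First, I would introduce the rational function
\[
R(x) := \frac{\prod_{k=1}^{m}(x-\hat{x}_k)}{\prod_{k=1}^{n}(x-\tilde{x}_k)}.
\]
Since the $\tilde{x}_i$ are pairwise distinct, $R(x)$ has $n$ simple poles, so it admits a partial fraction decomposition $R(x) = \sum_{i=1}^{n}\frac{A_i}{x-\tilde{x}_i}$, where the residue at $\tilde{x}_i$ is computed by the standard formula
\[
A_i \;=\; \lim_{x\to\tilde{x}_i}(x-\tilde{x}_i)R(x)\;=\;\frac{\prod_{k=1}^{m}(\tilde{x}_i-\hat{x}_k)}{\prod_{k=1,k\neq i}^{n}(\tilde{x}_i-\tilde{x}_k)}.
\]
Thus the sum appearing in (4.17) is precisely $\sum_{i=1}^{n}A_i$.

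Next, I would use the degree hypothesis $m<n-1$, i.e.\ $m-n+1\leq 0$. The numerator of $R$ has degree $m$ while the denominator has degree $n$, so
\[
x R(x) \;\sim\; x^{\,m-n+1} \;\xrightarrow[|x|\to\infty]{}\; 0.
\]
On the other hand, from the partial fraction expansion,
\[
x R(x) \;=\; \sum_{i=1}^{n} \frac{A_i\, x}{x-\tilde{x}_i} \;\xrightarrow[|x|\to\infty]{}\; \sum_{i=1}^{n}A_i.
\]
Comparing the two limits yields $\sum_{i=1}^n A_i = 0$, which is (4.17).

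There is essentially no obstacle here: the only subtlety is the strict inequality $m<n-1$, which guarantees $xR(x)$ still vanishes at infinity; if one only had $m<n$, one would get the correct leading coefficient but not zero. Equivalently, one could phrase the argument via Lagrange interpolation applied to the polynomial $p(x)=\prod_{k=1}^m(x-\hat{x}_k)$ of degree $m$ at the nodes $\tilde{x}_1,\ldots,\tilde{x}_n$: since $\deg p \leq n-2$, the coefficient of $x^{n-1}$ on the interpolation side, which equals $\sum_i p(\tilde{x}_i)/\prod_{k\neq i}(\tilde{x}_i-\tilde{x}_k)$, must vanish.
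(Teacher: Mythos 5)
Your argument is correct and is essentially the paper's own proof: the authors also write the partial fraction expansion of $x\prod_{k=1}^{m}(x-\hat{x}_k)/\prod_{k=1}^{n}(x-\tilde{x}_k)$ (their (4.18), which is exactly your $xR(x)=\sum_i A_i x/(x-\tilde{x}_i)$) and let $x\uparrow\infty$, using $m<n-1$ to make the left-hand side vanish. Your Lagrange-interpolation remark is just a rephrasing of the same identity, so there is nothing substantively different to compare.
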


\begin{proof}
Note first that
\begin{equation}
\frac{x \prod_{i=1}^{m}(x-\hat{x}_i)}{\prod_{i=1}^{n}(x-\tilde{x}_i)}=
\sum_{i=1}^{n}\frac{\prod_{k=1}^{m}(\tilde{x}_i-\hat{x}_k)}{\prod_{k=1,k\neq i}^{n}
(\tilde{x}_i-\tilde{x}_k)}\frac{x}{x-\tilde{x}_i}.
\end{equation}
Since $m < n-1$,  formula (4.17) is derived from (4.18) by letting $x \uparrow \infty$.
\qed
\end{proof}

\begin{Proposition}
For $X$ in (4.1), $q>0$ and $p>-q$ such that $q,p+q \in \mathbb Q$, Theorem 3.1 holds, i.e.,
\begin{equation}
V_q(x)-\mathbb P_x\left(X_{e(q)}> y\right)=J_0(b-x;y-b), \ \ y\geq b,
\end{equation}
with
\begin{equation}
J_0(x;y-b)=\int_{-\infty}^{x}F_0(x-z+y-b)dK_{q}(z), \ \ x \in \mathbb R,
\end{equation}
where $K_{q}(x)$ is given by (3.4) with $X$ in (4.1); $F_0(x)$ is given by (4.12) and its
Laplace transform is given by (4.13).
\end{Proposition}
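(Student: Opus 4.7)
The plan is to match both sides of (4.19) through their Fourier transforms along the imaginary axis, reducing the claim to the computation already carried out in Lemma 4.2. The left-hand side $V_q(x)-\mathbb{P}_x(X_{e(q)}>y)$ is continuous in $x$ (by Remark 4.2, together with continuity of $x\mapsto \mathbb{P}(X_{e(q)}>y-x)$, which follows from Lemma 2.1) and decays exponentially as $|x|\to\infty$: the piecewise representation (4.7) shows $V_q(x)\to 0$ exponentially as $x\to -\infty$ and $V_q(x)\to 1$ exponentially as $x\to\infty$, while the exponential tails of $X_{e(q)}$ (governed by the roots $\beta_{i,q}$ and $\gamma_{i,q}$ of $\psi(z)=q$) make $\mathbb{P}_x(X_{e(q)}>y)$ approach $0$ and $1$ at matching exponential rates. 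Thus the difference is absolutely integrable, and by Lemma 4.2 its Fourier transform at any $\phi$ with $\mathrm{Re}(\phi)=0$ equals
\[
\mathbb{E}\!\left[e^{\phi\underline{X}_{e(q)}}\right]\mathbb{E}\!\left[e^{\phi\overline{X}_{e(p+q)}}\right]\int_{0}^{\infty}F_0(v+y-b)\,e^{\phi v}\,dv.
\]

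Next I would compute the Fourier transform of $J_0(b-x;y-b)$ directly. Substituting $u=b-x$ and applying Fubini's theorem (legitimate because $F_0$ is bounded on $[y-b,\infty)$ and $K_q$ is a probability measure), the transform becomes
\[
\int_{-\infty}^{\infty} e^{-\phi(x-b)} J_0(b-x;y-b)\,dx=\int_{-\infty}^{\infty} e^{\phi u}\left(\int_{-\infty}^{u} F_0(u-z+y-b)\,dK_q(z)\right) du,
\]
and the inner substitution $v=u-z$ followed by exchange of integrals yields the product structure
\[
\left(\int_{-\infty}^{\infty} e^{\phi z}\,dK_q(z)\right)\int_{0}^{\infty} F_0(v+y-b)\,e^{\phi v}\,dv.
\]
By the convolution definition (3.4) of $K_q$, the first factor equals $\mathbb{E}[e^{\phi\underline{X}_{e(q)}}]\,\mathbb{E}[e^{\phi\overline{X}_{e(p+q)}}]$, so this matches (4.11) exactly.

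Uniqueness of Fourier transforms of integrable continuous functions then delivers (4.19) pointwise. Continuity of the right-hand side in $x$ is ensured by Proposition 3.1 (applicable since the jump-diffusion with $\sigma>0$ is not a compound Poisson process) together with the smoothness of the exponential-sum $F_0$, and absolute integrability comes from the exponential decay of $F_0$ combined with the fact that $K_q$ is a probability distribution function. The identification of $F_0$ with the $F_1$ of Theorem 3.1 follows by comparing (4.13) with (3.5) and invoking uniqueness of Laplace transforms; the boundary case $y=b$ then passes by continuity in $y$ of both sides. I expect the only delicate point to be the integrability/Fubini justification, but the explicit exponential structure of $V_q$ in Proposition 4.1 and the exponential tails of $\overline{X}_{e(q)}$ and $\underline{X}_{e(q)}$ make this routine.
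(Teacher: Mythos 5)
Your proposal is correct and takes essentially the same route as the paper: it invokes Lemma 4.2 for the transform of $V_q(x)-\mathbb P_x\left(X_{e(q)}>y\right)$, computes the transform of the convolution $J_0(b-x;y-b)$ via Fubini to obtain the same product (the paper's (4.21)), concludes by uniqueness of Fourier transforms of continuous integrable functions, and handles $y=b$ by a dominated-convergence limit. Your explicit verification of integrability and of the Fubini step merely fills in details the paper leaves implicit.
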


\begin{proof}
Since $V_q(x)$  is a continuous function of $x$ (see Remark 4.2) and
$\mathbb P_x\left(X_{e(q)} > y\right)$, as a function of $x$, is also continuous with respect
to $x$ (as $\mathbb P\left(X_{e(q)} = z\right)=0$ for all $z \in \mathbb R$, see Lemma 2.1),
 the integrand on the left-hand side of (4.11) is continuous with respect to $x$.

As the distributions of $\overline{X}_{e(p+q)}$ and $\underline{X}_{e(q)}$ have continuous
density functions (see Lemma A.1), $K_q(x)$ (see (3.4)) also has a density function. This
result and the continuity of $F_0(x)$ on $(0,\infty)$ will lead to that $J_0(x;y-b)$ in
(4.20) is continuous  on $(-\infty, \infty)$. Finally, the definition of $J_0(x;y-b)$ gives
\begin{equation}
\mathbb E\left[e^{\phi \underline{X}_{e(q)}}\right]\mathbb E\left[e^{\phi \overline{X}_{e(p+q)}}\right]
\int_{0}^{\infty}F_0(x+y-b)e^{\phi x}dx=\int_{-\infty}^{\infty}J_0(x;y-b)e^{\phi x}dx.
\end{equation}
From (4.11) and (4.21), we can derive the conclusion that (4.19) holds for $y > b $.

Next, it is obvious that
\begin{equation}
\lim_{y \downarrow b}\mathbb P_x\left(X_{e(q)}> y\right)=\mathbb P_x\left(X_{e(q)}> b\right).
\end{equation}
Besides, we have $\lim_{y \downarrow b}J_0(x;y-b)=J_0(x;0)$,
which is due to the fact that $F_0(x)$ is bounded and  continuous on $(0,\infty)$ (see (4.12))
and the dominated convergence theorem. Since (A.14) holds, applying the dominated convergence
theorem again gives
\[
\lim_{y \downarrow b}\mathbb E_x\left[e^{-p\int_{0}^{e(q)}\textbf{1}_{\{X_s \leq b\}}ds}
\textbf{1}_{\{X_{e(q)} > y\}}\right]=\mathbb E_x\left[e^{-p\int_{0}^{e(q)}
\textbf{1}_{\{X_s \leq b\}}ds}\textbf{1}_{\{X_{e(q)} > b\}}\right].
\]
Therefore, we conclude that (4.19) also holds for $y=b$.
\qed
\end{proof}

\subsection{The case of $q \in \mathbb Q^c$ or $ p+q \in \mathbb Q^c$}
\label{sec:4.2}
In this subsection, for given $q>0$ and $p>-q$, we assume that either $q\in \mathbb Q^c$
or $p+q \in \mathbb Q^c$ holds.

From Lemma 4.1 (iii), we can find a sequence of $q_n$ such that $q_n, p+q_n \in \mathbb Q$
and $\lim_{n \uparrow \infty}q_n \downarrow q$. For each $n$, the result in Proposition 4.2
leads to

\begin{equation}
V_{q_n}(x)-\mathbb P_x\left(X_{e(q_n)} > y\right)=J^n_0(b-x;y-b),\ \  y>b,
\end{equation}
with
\begin{equation}
J^n_0(x;y-b)=\int_{-\infty}^{x}F_0^n(x-z+y-b)dK_{q_n}(z),  \ \ x \in \mathbb R,
\end{equation}
where $K_{q_n}(x)$ is the convolution of $\underline{X}_{e(q_n)}$ and $\overline{X}_{e(p+q_n)}$
under $\mathbb P$, and
\begin{equation}
\begin{split}
&\int_{0}^{\infty} e^{-s x}F_0^n(x)dx
=\frac{1}{s}\left(\frac{\mathbb E\left[e^{-s \overline{X}_{e(q_n)}}\right]}
{\mathbb E\left[e^{-s \overline{X}_{e(p+q_n)}}\right]}-1\right),\ \  s > 0.
\end{split}
\end{equation}

\begin{Lemma}
It holds that
\begin{equation}
\begin{split}
&\lim_{n \uparrow \infty}V_{q_n}(x)=V_q(x),\ \
\lim_{n \uparrow \infty}\mathbb P_x\left(X_{e(q_n)} > y\right)=\mathbb P_x\left(X_{e(q)} > y\right).
\end{split}
\end{equation}
And for $K_{q}(x)$ given by (3.4) with $X$ in (4.1), we have
\begin{equation}
\lim_{n \uparrow \infty} K_{q_n}(x) =K_q(x), \ \ x  \in  \mathbb R.
\end{equation}
\end{Lemma}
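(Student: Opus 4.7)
The plan is to express each of the three quantities as an integral against the exponential density $q_n e^{-q_n t}$ and then apply dominated convergence in each case. The elementary observation driving all three arguments is that since $q_n \downarrow q$, for every $t\geq 0$ and every $n$,
\[
q_n e^{-q_n t} \;=\; q_n\, e^{-qt}\, e^{-(q_n-q) t} \;\leq\; q_1\, e^{-qt},
\]
while $q_n e^{-q_n t} \to q e^{-qt}$ pointwise in $t$. This gives the uniform integrable bound required for all three limits.

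For the middle identity, I would write $\mathbb P_x(X_{e(q_n)} > y) = q_n \int_0^\infty e^{-q_n t}\, \mathbb P_x(X_t > y)\, dt$; since $\mathbb P_x(X_t > y)\leq 1$, the integrand is dominated by $q_1 e^{-qt}$, and dominated convergence immediately yields $\mathbb P_x(X_{e(q_n)} > y) \to \mathbb P_x(X_{e(q)} > y)$. For the first identity, write
\[
V_{q_n}(x) \;=\; q_n \int_0^\infty e^{-q_n t}\, h_x(t)\, dt, \qquad h_x(t) = \mathbb E_x\!\left[e^{-p \int_0^t \mathbf{1}_{\{X_s \leq b\}} ds} \mathbf{1}_{\{X_t > y\}}\right].
\]
If $p \geq 0$ then $|h_x(t)| \leq 1$ and the argument proceeds exactly as above. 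If $-q < p < 0$ then $|h_x(t)| \leq e^{-pt}$, so the integrand is at most $q_n e^{-(q_n + p)t}$; since $q + p > 0$, the same elementary inequality gives domination by $q_1 e^{-(q+p)t}$, which is integrable, and dominated convergence concludes the argument.

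For the convergence of $K_{q_n}$, I would first apply the same technique to the two marginals. Writing, for each $z \in \mathbb R$,
\[
\mathbb P\!\left(\overline{X}_{e(p+q_n)} \leq z\right) \;=\; (p+q_n)\int_0^\infty e^{-(p+q_n) t}\, \mathbb P\!\left(\overline{X}_t \leq z\right) dt,
\]
and similarly for $\mathbb P(\underline{X}_{e(q_n)} \leq z)$, dominated convergence yields pointwise convergence of both CDFs at every $z$. Because the convolution (3.4) defining $K_{q_n}$ treats $\underline{X}_{e(q_n)}$ and $\overline{X}_{e(p+q_n)}$ as independent, weak convergence of the two marginals implies weak convergence of their sum, so $K_{q_n}(x) \to K_q(x)$ at every continuity point $x$ of $K_q$. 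By Proposition 3.1, $K_q$ is continuous on all of $\mathbb R$---applicable because the process $X$ in (4.1) has $\sigma > 0$ and is therefore not a compound Poisson process---so pointwise convergence holds at every $x \in \mathbb R$.

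The only genuine subtlety is the case $-q < p < 0$ in the first limit, where the bound $|h_x(t)| \leq e^{-pt}$ forces one to exploit the strict inequality $q + p > 0$ when constructing the dominating function; the remaining two limits are purely mechanical. No structural feature of the process in (4.1) is needed beyond $\sigma > 0$, which enters only to guarantee the continuity of $K_q$.
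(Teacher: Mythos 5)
Your proposal is correct and follows essentially the same route as the paper: both write $V_{q_n}$, $\mathbb P_x(X_{e(q_n)}>y)$ and the marginal laws of $\overline{X}_{e(p+q_n)}$, $\underline{X}_{e(q_n)}$ as integrals against the exponential density and apply dominated convergence (using $q\le q_n\le q_1$ and $p+q>0$), then combine convergence in distribution of the two independent marginals with the continuity of $K_q$ from Proposition 3.1 to get $K_{q_n}(x)\to K_q(x)$ for every $x$. The only cosmetic difference is that the paper verifies the marginal convergence through Laplace transforms while you converge the distribution functions pointwise directly; your explicit dominating functions, including the bound $q_1e^{-(q+p)t}$ in the case $-q<p<0$, fill in details the paper leaves implicit.
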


\begin{proof}
It follows from the definition of (3.1)  that
\begin{equation}
\begin{split}
&V_{q_n}(x)=\int_{0}^{\infty}
q_n e^{-q_n t}\mathbb E_x\left[e^{-p\int_{0}^{t}\textbf{1}_{\{X_s \leq b\}}ds}
\textbf{1}_{\{X_{t} > y\}}\right]dt.
\end{split}
\end{equation}
Since $q_n > q$ and $p+q>0$, formula (4.26) is obtained from  the application of the dominated
 convergence theorem. Similarly, for $Re(\phi)\geq 0$, we have
\begin{equation}
\begin{split}
\lim_{n \uparrow \infty}\mathbb E\left[e^{\phi \underline{X}_{e(q_n)}}\right]
=\mathbb E\left[e^{\phi \underline{X}_{e(q)}}\right], \  \lim_{n \uparrow \infty}
\mathbb E\left[e^{-\phi \overline{X}_{e(p+q_n)}}\right]
=\mathbb E\left[e^{-\phi \overline{X}_{e(p+q)}}\right].
\end{split}
\end{equation}
Formula (4.29) means that $\overline{X}_{e(p+q_n)}$ and $\underline{X}^n_{e(q_n)}$ converge
respectively to $\overline{X}_{e(p+q)}$ and $\underline{X}_{e(q)}$ in distribution. Thus,
recalling the definition of $K_q(x)$ in (3.4) and Proposition 3.1, we derive (4.27).
\qed
\end{proof}

For the function $F_0^n(x)$ in (4.25) and $F_1(x)$ in (3.5) with $X$ given by (4.1), we have
the following lemma.

\begin{Lemma}
(i) $F_0^n(x)$ is uniformly convergence to $F_1(x)$ on $[0,\infty]$.

(ii) $F_1(x)$, $F_0^1(x), F_0^2(x)$, $\ldots$, are uniformly bounded.
\end{Lemma}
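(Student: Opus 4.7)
For part (ii), the plan is direct: apply Proposition 3.2 verbatim to each $F_0^n$, which by construction satisfies (3.6) with $q$ replaced by $q_n$. When $p > 0$ this yields $-1 \leq F_0^n(x) \leq 0$ uniformly in $n$ and $x$. When $-q < p < 0$, the bound (3.9) gives $|F_0^n(x)| < 2 e^{2 I(q_n)} + 1$, where $I(q') := \int_{0}^{\infty} t^{-1} e^{-q' t}(e^{-p t} - 1)\,dt$. Since $q_n > q$ forces $e^{-q_n t} < e^{-q t}$ and the integrand is positive, $I(q_n) < I(q) < \infty$, so the single constant $2 e^{2 I(q)} + 1$ dominates all the $F_0^n$ together with $F_1$.

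For part (i), the plan is to combine the explicit series representations (2.16) and (2.15) of $G_1$ and of $G_{21}, G_{22}$ with total-variation estimates on iterated convolutions. The first step is to show that the L\'evy measures $\Pi_i^n$ defined by (2.4) or (2.7) with $q$ replaced by $q_n$ converge in total variation to $\Pi_i$ as $q_n \downarrow q$. By Fubini this reduces to dominated convergence in the $t$-variable with the integrable envelope $t^{-1} e^{-qt}|1-e^{-pt}|\mathbb{P}(X_t > 0)$ (integrable by (2.11) and (2.9)). Monotonicity of the defining integrand in $q$ also yields the uniform mass bound $\sup_n \Pi_i^n(0,\infty) \leq \Pi_i(0,\infty) =: C_i$.

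Second, I would combine this with the elementary inequality $\|\mu^{*k} - \nu^{*k}\|_{TV} \leq k\,\max(\|\mu\|_{TV}, \|\nu\|_{TV})^{k-1}\,\|\mu - \nu\|_{TV}$, applied to $\mu = \Pi_i^n$ and $\nu = \Pi_i$, yielding the uniform-in-$x$ estimate $\sum_{k \geq 0} |(\Pi_i^n)^{*k}(0,x) - \Pi_i^{*k}(0,x)|/k! \leq e^{C_i}\,\|\Pi_i^n - \Pi_i\|_{TV}$. Together with the scalar convergences $e^{-\Pi_1^n(0,\infty)} \to e^{-\Pi_1(0,\infty)}$ and $e^{\Pi_2^n(0,\infty)} \to e^{\Pi_2(0,\infty)}$, this produces $\sup_x |G_1^n(x) - G_1(x)| \to 0$ and analogously $\sup_x |G_{2i}^n(x) - G_{2i}(x)| \to 0$. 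Inserting into (3.6) and the analogous identity for $F_0^n$ then gives $\sup_{x \in [0,\infty]} |F_0^n(x) - F_1(x)| \to 0$, and uniformity at $\infty$ is automatic because $F_0^n(\infty) = F_1(\infty) = 0$ by (3.7).

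The main obstacle I anticipate is the case $p < 0$, where $G_{21} - G_{22}$ is only a signed combination of iterated convolutions, so a P\'olya-type weak-to-uniform upgrade is not available. The uniform total-variation bound $C_2$ is what rescues the argument, supplying the geometric factor $C_2^{k-1}$ in the convolution inequality and making $\sum_k k C_2^{k-1}/k! = e^{C_2}$ finite, so the sup-norm error is dominated by a summable series independent of $n$. A subsidiary technical point is uniformity at the atomic point $x = 0$ of $G_1$ and $G_{21}$, whose atom sizes $e^{-\Pi_1^n(0,\infty)}$ and $1$ converge in $n$ and are absorbed by the same supremum estimate.
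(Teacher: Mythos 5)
Your proposal is correct, and part (ii) is essentially the paper's own argument: the paper likewise invokes Proposition 3.2, using $-1\le F_0^n(x)\le 0$ when $p>0$ and, when $-q<p<0$, the monotonicity in $q'$ of $\int_0^\infty t^{-1}e^{-q't}(e^{-pt}-1)dt$ together with $q_n>q$ to obtain one bound valid for all $n$ (its formula (4.44)). For part (i), however, you take a genuinely different route. The paper passes the convergence of the Wiener--Hopf factors (4.29), (4.32) through the continuity theorem for Laplace transforms (Feller) to get pointwise convergence on $(0,\infty)$, and then upgrades to uniform convergence by a P\'olya/Dini-type argument exploiting monotonicity: for $p>0$ it uses that each $F_0^n$ is increasing, continuous and bounded with converging values at $0$ and $\infty$; for $-q<p<0$, where $F_0^n$ itself is not monotone, it applies the same monotone upgrade separately to the measures $G_{21}^n$ and $G_{22}^n$ (using (2.18)--(2.19) for the endpoint values) and recombines through (3.6) and the scalar factor involving $\Pi_2^n(0,\infty)$. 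You instead work directly with the exponential series (2.15)--(2.16): total-variation convergence $\Pi_i^n\to\Pi_i$ is indeed immediate, since the difference is the nonnegative measure with $t$-density $t^{-1}(e^{-qt}-e^{-q_nt})\,|1-e^{-pt}|\,\mathbb P(X_t>0)$, dominated by the integrable envelope guaranteed by (2.11) and (2.9); the telescoping bound $\|\mu^{*k}-\nu^{*k}\|_{TV}\le k\max(\|\mu\|_{TV},\|\nu\|_{TV})^{k-1}\|\mu-\nu\|_{TV}$ is valid for finite measures, and the uniform mass bound $\Pi_i^n(0,\infty)\le\Pi_i(0,\infty)$ (from $q_n>q$) makes the resulting series summable, yielding $\sup_x|G^n-G|\to0$ and hence uniform convergence of $F_0^n$ to $F_1$ after the (converging) scalar prefactors are accounted for; the endpoints $0$ and $\infty$ are handled exactly as you say. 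What your route buys is a quantitative, one-stroke argument (explicit rate $e^{C_i}\|\Pi_i^n-\Pi_i\|_{TV}$) that needs neither the continuity theorem nor any monotonicity, and treats $p>0$ and $p<0$ identically; what the paper's route buys is brevity, since Theorem 2.1 and Proposition 3.2 are already in place and the monotone structure of $F_0^n$ (resp.\ $G_{2i}^n$) is there to exploit.
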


\begin{proof}
First, formula (3.7) leads to
\begin{equation}
F_0^n(\infty)=0=F_1(\infty),
\end{equation}
and $
F_0^n(0) =e^{-\int_{0}^{\infty}\frac{1}{t}e^{-q_nt}\left(1-e^{-pt}\right)
\mathbb P\left(X_t>0\right)dt}-1.
$
Thus
\begin{equation}
\lim_{n\uparrow \infty}F_0^n(0) = F_1(0)=
  e^{-\int_{0}^{\infty}\frac{1}{t}e^{-qt}\left(1-e^{-pt}\right)\mathbb P\left(X_t>0\right)dt}-1.
\end{equation}
In addition, similar to (4.29), we also have
\begin{equation}
 \lim_{n \uparrow \infty}\mathbb E\left[e^{-s \overline{X}_{e(q_n)}}\right]
 =\mathbb E\left[e^{-s \overline{X}_{e(q)}}\right], \ \ s > 0.
\end{equation}

(1) Assume $p > 0$. It follows from (3.13), (4.29) and (4.32) that
\begin{equation}
\lim_{n \uparrow \infty}\int_{0^-}^{\infty} e^{-s x}d(F_0^n(x)+1)
=\int_{0^-}^{\infty} e^{-s x}d(F_1(x)+1).
\end{equation}
Since $F_1(x)$ is continuous on $(0,\infty)$, applying the continuity theorem for Laplace
 transforms (see, e.g., Theorem 2a on page 433 in Feller [6]) to (4.33) gives
\begin{equation}
\lim_{n \uparrow \infty} F_0^n(x)+1=F_1(x)+1, \ \ x > 0.
\end{equation}
Since $F_0^n(x)$ is increasing, continuous and bounded on $(0,\infty)$ (see Proposition 3.2 (ii)).
From (4.30), (4.31) and (4.34),  we  deduce the result that $F_0^n(x)$ is uniformly convergence
to $F_1(x)$ on $[0,\infty]$.
Besides, formula (3.8) produces
\begin{equation}
-1\leq F_1(x), \ \ F_0^{n}(x) \leq 0, \ \ for \ \ n=1,2,\ldots,
\end{equation}

(2) Assume $-q < p < 0$. For each $n=1,2,\ldots$, we obtain from (3.6) that
\begin{equation}
F_0^n(x)=e^{-\Pi_2^n(0,\infty)}\left(G_{21}^n(x)-G_{22}^n(x)\right)-1,
\end{equation}
where
\begin{equation}
\Pi_2^n(0,\infty):=\int_{0}^{\infty}\frac{1}{t}e^{-q_nt}(e^{-pt}-1)\mathbb P\left(X_t > 0\right)dt,
\end{equation}
and $G_{21}^n(x)$ and $G_{22}^n(x)$ are given respectively by (see (2.5), (2.6) and (2.8))
\begin{equation}
2\int_{0^-}^{\infty}e^{-s x}dG^n_{21}(x)=e^{-\Pi^n_2(0,\infty)}
\frac{\mathbb E\left[e^{-s\overline{X}_{e(q_n)}}\right]}
{\mathbb E\left[e^{-s\overline{X}_{e(p+q_n)}}\right]}
+e^{\Pi^n_2(0,\infty)}\frac{\mathbb E\left[e^{-s\overline{X}_{e(p+q_n)}}\right]}
{\mathbb E\left[e^{-s\overline{X}_{e(q_n)}}\right]},
\end{equation}
and
\begin{equation}
2\int_{0^-}^{\infty}e^{-s x}dG^n_{21}(x)=e^{\Pi^n_2(0,\infty)}
\frac{\mathbb E\left[e^{-s\overline{X}_{e(p+q_n)}}\right]}
{\mathbb E\left[e^{-s\overline{X}_{e(q_n)}}\right]}-e^{-\Pi^n_2(0,\infty)}
\frac{\mathbb E\left[e^{-s\overline{X}_{e(q_n)}}\right]}
{\mathbb E\left[e^{-s\overline{X}_{e(p+q_n)}}\right]}.
\end{equation}

Obviously, we have
\begin{equation}
\lim_{n \uparrow \infty}\Pi_2^n(0,\infty)=\Pi_2(0,\infty):
=\int_{0}^{\infty}\frac{1}{t}e^{-qt}(e^{-pt}-1)\mathbb P\left(X_t > 0\right)dt.
\end{equation}
Then from (4.29) and (4.32), we immediately derive that
\begin{equation}
\int_{0^-}^{\infty}e^{-s x}dG^n_{2i}(x)=\int_{0^-}^{\infty}e^{-s x}dG_{2i}(x), \ \ i=1,2,
\end{equation}
where $G_{21}(x)$ and $G_{22}(x)$ are given by (2.5), (2.6) and (2.7) with $X$ in (4.1).
Applying the continuity theorem for Laplace transforms again leads to
\begin{equation}
\lim_{n \uparrow \infty} G_{2i}^n(x)=G_{2i}(x), \ \ for \ \ x > 0 \ \ and \ \ i=1,2.
\end{equation}
Recall (2.18) and (2.19). From (4.40), for $i=1,2$, it is easy to show that
\begin{equation}
\lim_{n \uparrow \infty} G_{2i}^n(0)=G_{2i}(0)\ \ and \ \
\lim_{n \uparrow \infty} G_{2i}^n(\infty)=G_{2i}(\infty) < \infty.
\end{equation}
Since $G_{21}(x)$ and $G_{22}(x)$ are measures, it follows from (4.42) and (4.43)
that $G_{2i}^n(x)$ is uniformly convergence to $G_{2i}(x)$ on $[0,\infty]$, where $i=1,2$.

Therefore, from (3.6) and (4.36), we arrive at the result that $F_0^n(x)$ is uniformly
convergence to $F_1(x)$ on $[0,\infty]$. Since $ q_n > q$, formula (3.9) gives us
\begin{equation}
|F_1(x)|,|F_0^n(x)| < e^{2\int_{0}^{\infty}\frac{1}{t}e^{-qt}(e^{-pt}-1)dt}+1,
\end{equation}
this completes the proof.
\qed
\end{proof}

\begin{Proposition}
For $X$ in (4.1), Theorem 3.1 is valid for the case of $q\in \mathbb Q^c$ or $p+q \in \mathbb Q^c$.
\end{Proposition}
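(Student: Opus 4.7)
My plan is to obtain Proposition 4.3 by an approximation argument from Proposition 4.2, using the sequence $q_n$ constructed from Lemma 4.1(iii) with $q_n, p+q_n \in \mathbb{Q}$ and $q_n \downarrow q$. For each $n$, Proposition 4.2 gives the identity
\[
V_{q_n}(x) - \mathbb{P}_x\left(X_{e(q_n)} > y\right) = J^n_0(b-x;\, y-b),
\]
with $J_0^n$ and $F_0^n$ as in (4.24)--(4.25). The strategy is simply to pass to the limit on both sides and identify the result with $J_1(b-x; y-b)$, where $F_1$ and $K_q$ are as in Theorem 3.1.

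The left-hand side is handled directly by Lemma 4.4, which gives $V_{q_n}(x) \to V_q(x)$ and $\mathbb{P}_x(X_{e(q_n)} > y) \to \mathbb{P}_x(X_{e(q)} > y)$. So all the real work is on the right-hand side, where I must show
\[
\int_{-\infty}^{b-x} F_0^n(b-x-z+y-b)\, dK_{q_n}(z) \;\longrightarrow\; \int_{-\infty}^{b-x} F_1(b-x-z+y-b)\, dK_q(z).
\]
For this I will combine the two convergences already established: Lemma 4.5 gives uniform convergence $F_0^n \to F_1$ on $[0,\infty]$ together with a uniform bound $|F_0^n|, |F_1| \leq C$; Lemma 4.4 gives pointwise convergence $K_{q_n}(z) \to K_q(z)$ on $\mathbb{R}$, and Proposition 3.1 says $K_q$ is continuous. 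Since the $K_{q_n}$ are probability distribution functions converging pointwise to a continuous distribution function, the associated measures $dK_{q_n}$ converge weakly to $dK_q$; moreover $\{b-x\}$ is a continuity point of $K_q$, so the integrals over $(-\infty,b-x)$ with respect to $dK_{q_n}$ also converge to the integral with respect to $dK_q$ for any fixed bounded continuous integrand.

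The main obstacle is that here both the integrand \emph{and} the measure vary with $n$, so the standard weak-convergence lemma does not apply verbatim. The natural resolution is to split
\[
\int_{-\infty}^{b-x}\! F_0^n \, dK_{q_n} - \int_{-\infty}^{b-x}\! F_1 \, dK_q
= \int_{-\infty}^{b-x}\!\! (F_0^n - F_1)\, dK_{q_n} \;+\; \int_{-\infty}^{b-x}\! F_1\, (dK_{q_n} - dK_q).
\]
The first term is bounded by $\sup_{u\geq 0}|F_0^n(u) - F_1(u)| \cdot K_{q_n}(b-x) \leq \|F_0^n - F_1\|_\infty$, which vanishes by the uniform convergence of Lemma 4.5. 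The second term vanishes by the weak-convergence argument above, using that $F_1$ is continuous on $(0,\infty)$ (Proposition 3.2) and that the integrand $F_1(y-x-z)$ is evaluated at arguments $y-x-z > y-b \geq 0$ when $z < b-x$.

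Finally, to conclude I need to handle the boundary case $y = b$, which can be done exactly as in the proof of Proposition 4.2: boundedness and continuity of $F_1$ together with dominated convergence give $\lim_{y\downarrow b} J_1(b-x; y-b) = J_1(b-x; 0)$, and Lemma 2.1 plus dominated convergence give the analogous limit on the probabilistic side. Combining these limits completes the identification $V_q(x) - \mathbb{P}_x(X_{e(q)} > y) = J_1(b-x; y-b)$ for all $y \geq b$, proving Theorem 3.1 in the remaining cases $q \in \mathbb{Q}^c$ or $p+q \in \mathbb{Q}^c$.
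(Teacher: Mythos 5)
Your proposal is correct, and its skeleton (approximate $q$ by $q_n\downarrow q$ with $q_n,p+q_n\in\mathbb Q$, use Proposition 4.2 for each $n$, handle the left-hand side by Lemma 4.4) coincides with the paper's. Where you genuinely diverge is in the key step, the convergence $J_0^n(b-x;y-b)\to J_1(b-x;y-b)$, where both the integrand and the measure vary with $n$. The paper rewrites the integral probabilistically as $\mathbb E\bigl[F_0^n(x-Z_0^n+y-b)\mathbf 1_{\{Z_0^n<x\}}\bigr]$ with $Z_0^n$ distributed according to $K_{q_n}$, and concludes by the bounded convergence theorem using $Z_0^n\Rightarrow Z_0$, the uniform convergence $F_0^n\to F_1$ (so $F_0^n(x_n)\to F_1(x)$ whenever $x_n\to x\ge 0$) and the continuity of $K_q$; implicitly this requires realizing the $Z_0^n$ so that the convergence becomes almost sure, which the paper leaves terse. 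Your analytic splitting
\[
\int (F_0^n-F_1)\,dK_{q_n}+\int F_1\,(dK_{q_n}-dK_q)
\]
makes the two sources of variation explicit: the first term is killed outright by the uniform bound $\|F_0^n-F_1\|_\infty$ (Lemma 4.5(i)) together with $K_{q_n}(\mathbb R)=1$, and the second by weak convergence $dK_{q_n}\Rightarrow dK_q$ (Lemma 4.4, Proposition 3.1) applied to the bounded function $z\mapsto F_1(y-x-z)\mathbf 1_{\{z<b-x\}}$, whose only possible discontinuity $\{b-x\}$ is a $K_q$-null set since $K_q$ is atomless. This is the extended portmanteau theorem rather than plain weak convergence of continuous integrands, so you should state that refinement explicitly, but the continuity-point remark you make is exactly the needed ingredient. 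What each route buys: yours is more elementary and self-contained, avoiding the varying-law bounded-convergence subtlety; the paper's probabilistic phrasing is more compact and is reused verbatim in Section 5 ((5.14)--(5.15)) for the general L\'evy case. Finally, your separate $y\downarrow b$ limit is harmless but redundant: Proposition 4.2 is already stated for $y\ge b$, so the limit in $n$ can be taken directly at $y=b$.
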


\begin{proof}
From Lemma 4.5 (i), we know $F_0^n(x)$ is uniformly convergence to $F_1(x)$ on $[0, \infty)$.
Thus if $x_n \rightarrow x \geq 0$, then
\begin{equation}
\lim_{n \uparrow \infty}F_0^n(x_n)=F_1(x).
\end{equation}
For $y \geq b$, $J_0^n(x;y-b)$ can be written as (see (4.24))
\begin{equation}
J_0^n(x;y-b)=\mathbb E\left[F_0^n(x-Z_0^n+y-b)\textbf{1}_{\{Z_0^n < x\}}\right],
\end{equation}
where the law of $Z_0^n$ is given by  $K_{q_n}(z)$. Besides, as Lemma 4.5 (ii) and
Proposition 3.1 hold, we deduce from (4.27), (4.45) and (4.46) that
\begin{equation}
\begin{split}
\lim_{n \uparrow \infty} J_0^n(x;y-b)
=J_1(x;y-b)=\mathbb E\left[F_1(x-Z_0+y-b)\textbf{1}_{\{Z_0 <x\}}\right],
\end{split}
\end{equation}
where the distribution of $Z_0$ is given by $K_q(z)$; and we have used the bounded convergence
theorem  in the derivation of (4.47).

Therefore, the desired result is deduced by letting $n\uparrow \infty$ in (4.23) and using
(4.26) and (4.47).
\qed
\end{proof}

\section{Proof of Theorem 3.1}
\label{sec:5.}
In this section, the details on the derivation of Theorem 3.1 are given. The following
technical lemma is important, and one can refer to Proposition 1 in Asmussen et al. [2]
for its proof.

\begin{Lemma}
For any  given L\'evy process $X=(X_t)_{t \geq 0}$, there exists a sequence of
$X^n=(X^n_t)_{t \geq 0}$ with the form of (4.1) such that
\begin{equation}
\lim_{n \uparrow \infty} \sup_{s \in [0,t]}|X^n_s-X_s|=0, \ \ almost \ \ surely.
\end{equation}
\end{Lemma}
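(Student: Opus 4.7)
The plan is to approximate $X$ in two stages. First, I would use the L\'evy--It\^o decomposition to separate the Brownian/drift part from the jumps, and truncate the small jumps at a level $\epsilon$ so that what remains is a Brownian motion with drift plus an independent compound Poisson with finite jump intensity. Second, I would replace the (now finite) positive and negative jump distributions by distributions of the phase-type form in (4.2)--(4.3), producing a process of the form (4.1). The whole construction must be performed by an explicit coupling on one probability space, so that the convergence is in the almost-sure uniform sense on $[0,t]$ and not merely in distribution.

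Concretely, write $X_t = X_0 + \mu t + \sigma W_t + J_t + \tilde M_t$ where $J$ collects the jumps of modulus $>1$ (a compound Poisson) and $\tilde M$ is the compensated small-jump martingale. For $\epsilon>0$ let $\tilde M^{\epsilon}$ be the compensated integral over $\{\epsilon<|x|\le 1\}$. The martingale $\tilde M-\tilde M^{\epsilon}$ has predictable variation $t\int_{|x|\le \epsilon}x^{2}\nu(dx)\to 0$ as $\epsilon\downarrow 0$, so Doob's $L^{2}$ maximal inequality gives $\mathbb{E}\sup_{s\le t}|\tilde M_{s}-\tilde M^{\epsilon}_{s}|^{2}\to 0$, and along a subsequence $\epsilon_n\downarrow 0$ the supremum tends to zero almost surely. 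Let $Y^{n}$ denote the resulting Brownian-plus-compound-Poisson process obtained with cutoff $\epsilon_n$; it has finite jump intensity $\lambda^{\pm}_n=\nu(\pm(\epsilon_n,\infty))$ and positive/negative jump laws $F^{\pm}_{n}$ which are the conditional normalizations of $\nu$ on those half-lines.

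For the second stage, I would use that mixtures of Erlang densities with common rate are weakly dense in the probability measures on $(0,\infty)$ (a classical result, and such mixtures are exactly of the form (4.2)); the same applies to the negative side. Pick approximating distributions $F^{\pm,k}_{n}\Rightarrow F^{\pm}_{n}$ as $k\to\infty$. Couple all the jumps on the same Poisson point process of jump times $(\tau_{i})$ and a common i.i.d.\ sequence of uniforms $(U_{i})$: at $\tau_{i}$ define the original jump $J_{i}$ and the approximating jump $J_{i}^{n,k}$ via the quantile transforms of the corresponding laws applied to $U_{i}$. Weak convergence of the laws then yields $J_{i}^{n,k}\to J_{i}$ almost surely at every continuity point of $F^{\pm}_{n}$, and since only finitely many jumps occur in $[0,t]$, the associated compound-Poisson paths converge uniformly on $[0,t]$ almost surely. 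Keeping the same Brownian motion $W$ and the same drift (up to the explicit compensator shift produced by the truncation) yields, for each $n$, a process $X^{n,k}$ of the form (4.1) with $\sup_{s\le t}|X^{n,k}_{s}-Y^{n}_{s}|\to 0$ a.s.\ as $k\to\infty$.

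A diagonal choice $X^{n}:=X^{n,k(n)}$ combines the two approximations and produces the required sequence. The main obstacle is precisely the coupling bookkeeping: one must carry out everything on a single probability space, reusing the same Brownian motion and the same Poisson point process of jump epochs for all $n$, and realize the jumps of both $X$ and $X^{n}$ from common uniforms through inverse CDFs; this is the only way to upgrade the weak/Laplace-level approximations (which are all one really gets directly) to almost-sure uniform convergence on compact time intervals. The small-jump step is subtler because no such coupling can give pathwise control in one shot; hence the appeal to Doob's inequality and to passing to a subsequence in $\epsilon_n$, which is then absorbed into the final diagonal extraction.
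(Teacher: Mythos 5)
Your construction is essentially a self-contained proof of a fact that the paper does not prove at all: Lemma 5.1 is quoted from Proposition 1 of Asmussen, Avram and Pistorius [2], whose argument is the same two-stage scheme you describe (discard/compensate the small jumps, then approximate the remaining jump law by a dense class of phase-type, i.e.\ Erlang-mixture, distributions, with everything coupled on one probability space). So your route is not wrong and not really different in substance; what it buys is an explicit coupling (common jump epochs, quantile transforms from common uniforms, Doob's $L^2$ maximal inequality for the small-jump residual, Borel--Cantelli/subsequence extraction), whereas the paper buys brevity by citation. Two routine points deserve tightening: the ``diagonal choice'' $X^n=X^{n,k(n)}$ should be justified via convergence in probability (which is metrizable) and Borel--Cantelli, since almost-sure convergence itself is not metrizable; and the quantile coupling is cleaner if you generate the uniforms from the actual jumps of the given $X$ (i.e.\ $U_i=F^{\pm}_n(J_i)$, with randomization at atoms) rather than re-constructing a version of $X$.

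There is, however, one genuine gap relative to the statement as it is used in the paper: the class (4.1) requires a Gaussian component with $\sigma>0$ (this is what makes the roots analysis of Lemma 4.1 and the regularity arguments in the Appendix work). If the given $X$ is a pure-jump process, your prescription ``keep the same Brownian motion $W$ and the same drift'' produces approximants with $\sigma=0$, which are not of the form (4.1). The fix is exactly the paper's Remark 5.1: add an independent Brownian term $\frac{1}{n}W_s$ to the $n$-th approximant, which contributes $\frac{1}{n}\sup_{s\le t}|W_s|\to 0$ uniformly on $[0,t]$ almost surely and so does not disturb the limit. With that addition (and the two small repairs above), your argument is complete.
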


\begin{Remark}
Note that $X^n$ in (5.1) is assumed to have a Gaussian component. A particular case is that the process $X$ in (5.1) is a pure jump process and at first sight Proposition 1 in Asmussen et al. [2] does not cover this special case. Here, for any given L\'evy process $X$, we remind the reader that
\[
\lim_{n \uparrow \infty} \sup_{s \in [0,t]}|\left(X_s+\frac{1}{n}W_s\right)-X_s|=\lim_{n \uparrow \infty} \frac{1}{n}\overline{W}_T=0,
\]
where $W_t$ is a Brownian motion. This means that Proposition 1 in Asmussen et al. [2] also holds for the above  case.
\end{Remark}

For each $X^n$ and $y \geq b$, Propositions 4.2 and 4.3 give
\begin{equation}
V_q^n(x)-\mathbb P_x\left(X^n_{e(q)}> y\right)=J^n_1(b-x;y-b),
\end{equation}
with
\begin{equation}
V_q^n(x):=\mathbb E_x\left[e^{-p\int_{0}^{e(q)}\rm{\bf{1}}_{\{X^n_s \leq b\}}ds}
\rm{\bf{1}}_{\{X^n_{e(q)}>y\}}\right],
\end{equation}
and
\begin{equation}
J^n_1(x;y-b)=\int_{-\infty}^{x}F_1^n(x-z+y-b)dK_q^n(z),
\end{equation}
where $K_q^n(x)$ is the convolution of $\underline{X}^n_{e(q)}$ and $\overline{X}^n_{e(p+q)}$
under $\mathbb P$, and
\begin{equation}
\begin{split}
&\int_{0}^{\infty} e^{-s x}F_1^n(x)dx
=\frac{1}{s}\left(\frac{\mathbb E\left[e^{-s \overline{X}^n_{e(q)}}\right]}
{\mathbb E\left[e^{-s \overline{X}^n_{e(p+q)}}\right]}-1\right),\ \  s > 0.
\end{split}
\end{equation}

\begin{Lemma}
It holds that
\begin{equation}
\begin{split}
\lim_{n \uparrow \infty}V_q^n(x)=V_q(x),\ \
\lim_{n \uparrow \infty}\mathbb P_x\left(X^n_{e(q)} > y\right)=\mathbb P_x\left(X_{e(q)} > y\right),
\end{split}
\end{equation}
and
\begin{equation}
\lim_{n \uparrow \infty} K^n_{q}(x) =K_q(x), \ \ x  \in  \mathbb R,
\end{equation}
where $K_{q}(x)$ is given by (3.4).
\end{Lemma}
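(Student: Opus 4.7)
The plan is to establish each of the three limits by coupling $X^n$ and $X$ through the almost-sure uniform convergence in Lemma 5.1 and invoking suitable convergence theorems. The crucial common ingredient is Lemma 2.1, which guarantees that the law of $X_t$ has no atoms for Lebesgue-a.e.\ $t$; this will be used repeatedly to upgrade uniform convergence of $X^n$ to a.s.\ convergence of the relevant indicator functionals. Throughout I would fix a single exponential clock $\tau = e(q)$ (and $\tau' = e(p+q)$) independent of every $X^n$ and $X$, and work on the probability space where the coupling of Lemma 5.1 holds.

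For the first limit, from Lemma 5.1 I have $X^n_\tau \to X_\tau$ almost surely, and since $\mathbb P(X_\tau = y) = 0$ by Lemma 2.1 it follows that $\mathbf{1}_{\{X^n_\tau > y\}} \to \mathbf{1}_{\{X_\tau > y\}}$ a.s. For the occupation-time integral, Fubini combined with Lemma 2.1 shows that the Lebesgue measure of $\{s \in [0,\tau]:\,X_s = b\}$ vanishes almost surely; at every other $s \leq \tau$, uniform convergence yields $\mathbf{1}_{\{X^n_s \le b\}} \to \mathbf{1}_{\{X_s \le b\}}$, so bounded convergence inside $[0,\tau]$ delivers $\int_0^\tau \mathbf{1}_{\{X^n_s \le b\}}\,ds \to \int_0^\tau \mathbf{1}_{\{X_s \le b\}}\,ds$ almost surely. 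The integrand defining $V_q^n(x)$ is then pointwise bounded by $e^{|p|\tau}$, which is integrable precisely because $p > -q$ gives $\mathbb E[e^{|p|\tau}] = q/(q-|p|) < \infty$. A final application of dominated convergence produces $V_q^n(x) \to V_q(x)$.

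The second limit is then immediate: the same argument $\mathbf{1}_{\{X^n_\tau > y\}} \to \mathbf{1}_{\{X_\tau > y\}}$ a.s.\ combined with bounded convergence (the integrand is bounded by $1$) gives $\mathbb P_x(X^n_{e(q)} > y) \to \mathbb P_x(X_{e(q)} > y)$. For the third limit, note that the functionals $\omega \mapsto \inf_{s \le \tau} X_s(\omega)$ and $\omega \mapsto \sup_{s \le \tau'} X_s(\omega)$ are continuous with respect to uniform convergence on compact intervals, so $\underline{X}^n_{e(q)} \to \underline{X}_{e(q)}$ and $\overline{X}^n_{e(p+q)} \to \overline{X}_{e(p+q)}$ almost surely under the coupling. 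Hence their marginal laws converge weakly, and because $K_q^n$ and $K_q$ are the convolutions of these marginals (treated as independent), $K_q^n \Rightarrow K_q$ weakly as distribution functions. Proposition 3.1 asserts that $K_q$ is continuous on $\mathbb R$, and weak convergence to a continuous limit distribution function upgrades automatically to pointwise convergence at every $x \in \mathbb R$.

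The step I expect to be the main obstacle is the convergence of the occupation integral, since the map $\mathbf{1}_{\{\cdot \le b\}}$ is discontinuous exactly at $b$ and our only control over $X^n-X$ is uniform in $s$, not in position. This is precisely where excluding the compound Poisson case becomes essential: without Lemma 2.1 one cannot rule out that $X$ spends positive Lebesgue time at level $b$, and the indicator functionals could then fail to converge. A secondary delicate point is the $-q<p<0$ regime, where the exponential factor can blow up and a genuine (integrable but unbounded) dominating function must be produced, as above via $e^{|p|\tau}$.
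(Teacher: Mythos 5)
Your argument is correct and follows essentially the same route as the paper: almost sure convergence from the coupling in Lemma 5.1 combined with the no-atoms property of Lemma 2.1, dominated convergence for $V_q^n(x)$ and $\mathbb P_x\left(X^n_{e(q)}>y\right)$, and convergence in distribution of $\underline{X}^n_{e(q)}$ and $\overline{X}^n_{e(p+q)}$ (the paper obtains this via the bound $|\overline{X}^n_t-\overline{X}_t|\le\sup_{0\le s\le t}|X^n_s-X_s|$ and Laplace transforms, you directly from the coupling), followed by convolution of the marginal laws and the continuity of $K_q$ from Proposition 3.1. One small correction: $\mathbb E\left[e^{|p|e(q)}\right]=q/(q-|p|)$ holds only when $|p|<q$, so for $p\ge q$ your proposed dominating function is not integrable; but in that regime $e^{-p\int_0^{e(q)}\mathbf{1}_{\{X^n_s\le b\}}ds}\le 1$, so the required domination is trivial, and the bound $e^{|p|e(q)}$ is needed (and valid) exactly in the delicate case $-q<p<0$ that you single out.
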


\begin{proof}
Since Lemmas 2.1 and 5.1 hold, the dominated convergence theorem will lead to
\begin{equation}
\begin{split}
&\lim_{n \uparrow \infty}\mathbb P_x\left(X^n_{e(q)} > z \right)=\lim_{n \uparrow \infty}q
\mathbb E\left[\int_{0}^{\infty}e^{-qt}\textbf{1}_{\{X^n_t>z\}}dt\right]
=\mathbb P_x\left(X_{e(q)} > z \right).
\end{split}
\end{equation}
Similarly, we have (note that $p+q>0$)
\begin{equation}
\begin{split}
&\lim_{n \uparrow \infty}V_q^n(x):=\lim_{n \uparrow \infty}q
\mathbb E_x\left[\int_{0}^{\infty}e^{-qt} e^{-p\int_{0}^{t}\rm{\bf{1}}_{\{X^n_s \leq b\}}ds}
\rm{\bf{1}}_{\{X^n_{t}>y\}}dt\right]=V_q(x).
\end{split}
\end{equation}

In addition, it is known that (see, e.g., Lemma 13.4.1 of Whitt [17])
\begin{equation}
|\overline{X}^n_t-\overline{X}_t|\leq \sup_{0 \leq s \leq t}|X^n_s-X_s|\
and \ |\underline{X}^n_t-\underline{X}_t|\leq \sup_{0 \leq s \leq t}|X^n_s-X_s|,
\end{equation}
which combined with (5.1), yields
\begin{equation}
\lim_{n \uparrow \infty}\mathbb E\left[e^{-s \overline{X}^n_{e(q)}}\right]
=q\lim_{n \uparrow \infty}\int_{0}^{\infty}e^{-qt}\mathbb E\left[e^{-s\overline{X}^n_{t}}\right]dt
=\mathbb E\left[e^{-s \overline{X}_{e(q)}}\right],
\end{equation}
and
\begin{equation}
 \lim_{n \uparrow \infty}\mathbb E\left[e^{s \underline{X}^n_{e(q)}}\right]
 =q\lim_{n \uparrow \infty}\int_{0}^{\infty}e^{-qt}\mathbb E\left[e^{s\underline{X}^n_{t}}\right]dt
 =\mathbb E\left[e^{s \underline{X}_{e(q)}}\right],
\end{equation}
where $s,q >0$.
Formulas (5.11) and (5.12) mean that $\overline{X}^n_{e(p+q)}$ and $\underline{X}^n_{e(q)}$
converge respectively to $\overline{X}_{e(p+q)}$ and $\underline{X}_{e(q)}$ in distribution,
thus formula (5.7) is derived by recalling Proposition 3.1.
\qed
\end{proof}

\begin{Lemma}
For the continuous functions $F_1(x)$ in (3.5) and $F_1^n(x)$ in (5.5), the following results hold.

(i) $F_1^n(x)$ is uniformly convergence to $F_1(x)$ on $[0,\infty)$.

(ii) $F_1(x)$, $F_1^1(x), F_1^2(x)$, $\ldots$, are uniformly bounded.
\end{Lemma}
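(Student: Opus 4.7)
The plan is to mimic the proof of Lemma 4.5 in Subsection 4.2, with the approximating sequence $X^n \to X$ from Lemma 5.1 playing the role that $q_n \downarrow q$ played there. I split into the cases $p > 0$ and $-q < p < 0$ and treat (i) before (ii).

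For $p > 0$, Proposition 3.2(i) gives $F_1(x) = G_1(x) - 1$ and $F_1^n(x) = G_1^n(x) - 1$, where $G_1, G_1^n$ are infinitely divisible probability distribution functions on $[0,\infty)$. Lemma 5.2 yields pointwise convergence $\mathbb{E}[e^{-s \overline{X}^n_{e(r)}}] \to \mathbb{E}[e^{-s \overline{X}_{e(r)}}]$ for $r = q$ and $r = p+q$, which via (3.13) gives convergence of $\int_{0^-}^{\infty} e^{-sx} dG_1^n(x)$ to $\int_{0^-}^{\infty} e^{-sx} dG_1(x)$. The continuity theorem for Laplace transforms (Feller [6], Theorem 2a on page 433) then forces $G_1^n(x) \to G_1(x)$ for all $x > 0$, while a dominated-convergence argument based on Lemmas 5.1 and 2.1 (which assures $\mathbb{P}(X^n_t > 0) \to \mathbb{P}(X_t > 0)$ for a.e.\ $t$) combined with (3.7) yields $F_1^n(0) \to F_1(0)$. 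Since each $G_1^n$ is monotone with $G_1^n(\infty) = 1$ and $G_1$ is continuous, the pointwise convergence upgrades to uniform convergence on $[0,\infty]$. The uniform bound $-1 \leq F_1^n, F_1 \leq 0$ is immediate from Proposition 3.2(ii).

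For $-q < p < 0$, I write $F_1(x) = e^{\Pi_2(0,\infty)}(G_{21}(x) - G_{22}(x)) - 1$ and the analogous expression for $F_1^n$ involving $\Pi_2^n, G_{2i}^n$. First, $\Pi_2^n(0,\infty) \to \Pi_2(0,\infty)$ by the same dominated-convergence argument (the envelope $\frac{1}{t} e^{-qt}(e^{-pt}-1)$ is integrable by (2.9)). Second, the representations (2.5)--(2.6) and Lemma 5.2 yield pointwise Laplace-transform convergence $\int_{0^-}^{\infty} e^{-sx} dG_{2i}^n(x) \to \int_{0^-}^{\infty} e^{-sx} dG_{2i}(x)$; since each $G_{2i}^n$ is monotone increasing with a finite total mass converging to that of $G_{2i}$ (by (2.18)--(2.19)), the continuity theorem together with a Pólya-type upgrade yields uniform convergence on $[0,\infty]$ for each $i$. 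Combining these with the convergence of the exponential prefactor $e^{\Pi_2^n(0,\infty)}$ gives uniform convergence of $F_1^n$ to $F_1$. The uniform bound for (ii) follows from (3.9), whose right-hand side depends only on $q$ and $p$, not on $n$.

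The main obstacle is the uniform upgrade in the case $-q < p < 0$: the functions $G_{21}^n, G_{22}^n$ are not probability distributions, so a direct Dini-on-distributions argument is not immediately available. The key observation is that each $G_{2i}^n$ is individually monotone increasing with a finite convergent limit at $\infty$ (controlled by (2.18)--(2.20)), so the standard Dini-type upgrade applies to $G_{21}^n$ and $G_{22}^n$ separately, and one only forms the difference and multiplies by $e^{\Pi_2^n(0,\infty)}$ at the very end. Once that is in place, everything else reduces to the same bookkeeping already carried out in Lemma 4.5.
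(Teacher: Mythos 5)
Your proposal is correct and follows essentially the same route as the paper: the paper's proof of Lemma 5.3 establishes exactly the two convergence inputs you use (convergence of $\mathbb E\left[e^{-s \overline{X}^n_{e(q)}}\right]$ via (5.11), and convergence of $\int_{0}^{\infty}\frac{1}{t}e^{-qt}(1-e^{-pt})\mathbb P\left(X^n_t>0\right)dt$ via Lemma 2.1 and dominated convergence) and then declares the rest identical to Lemma 4.5, which is precisely the case-by-case argument ($p>0$ via $G_1^n$, $-q<p<0$ via $\Pi_2^n$, $G_{21}^n$, $G_{22}^n$, with the continuity theorem, the monotone upgrade to uniform convergence, and the bounds (3.8)/(3.9)) that you reproduce.
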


\begin{proof}
Note that (5.11) holds for any $q>0$. Besides, from (5.1),  we have
\[
\lim_{n\uparrow \infty}\int_{0}^{\infty}\frac{1}{t}e^{-qt}\left(1-e^{-pt}\right)
\mathbb P\left(X^n_t>0\right)dt=\int_{0}^{\infty}\frac{1}{t}e^{-qt}\left(1-e^{-pt}\right)
\mathbb P\left(X_t>0\right)dt,
\]
which is due to the result that $
\mathbb P\left(X_{t}=0\right)=0$ for Lebesgue almost every $t>0$ (see Lemma 2.1) and the
dominated convergence theorem (note that $p+q>0$).

The remaining proof of this lemma is similar to that of Lemma 4.5, thus the details are
omitted for simplicity.
\qed
\end{proof}

\textit{Proof of Theorem 3.1}
First, Lemma 5.3 (i) states  that if $x_n \rightarrow x  \geq 0$, then
\begin{equation}
\lim_{n \uparrow \infty}F_1^n(x_n)=F_1(x).
\end{equation}
Next, for $y  \geq b$, $J_1^n(x;y-b)$ in (5.4) can be written as
\begin{equation}
J_1^n(x;y-b)=\mathbb E\left[F_1^n(x-Z_1^n+y-b)\textbf{1}_{\{Z_1^n < x\}}\right],
\end{equation}
where the law of $Z_1^n$ is given by  $K_q^n(z)$. Since $F_1(x)$, $F_1^1(x)$, $F_1^2(x)$, $\ldots$,
 are uniformly bounded (see Lemma 5.3 (ii)). Applying the bounded convergence theorem to (5.14)
 and using (5.7), (5.13) and Proposition 3.1, we obtain
\begin{equation}
\begin{split}
\lim_{n \uparrow \infty} J_1^n(x;y-b)
=J_1(x;y-b)=\mathbb E\left[F_1(x-Z_1+y-b)\textbf{1}_{\{Z_1<x\}}\right],
\end{split}
\end{equation}
where the distribution of $Z_1$ is given by $K_q(z)$. Therefore, letting $n \uparrow \infty$ in
(5.2), we derive Theorem 3.1 from  (5.6) and (5.15).\qed

\section{Examples}
In Corollary 3.3 (see also Remark 3.10), we obtain expresses for the following expectation:
\begin{equation}
\mathbb E_x\left[e^{-p\int_{0}^{e(q)}\rm{\bf{1}}_{\{X_s \leq b\}}ds}
\rm{\bf{1}}_{\{X_{e(q)} \in dy\}}\right]=q\int_{0}^{\infty}e^{-qt} \mathbb E_x\left[e^{-p\int_{0}^{t}\rm{\bf{1}}_{\{X_s \leq b\}}ds}
\rm{\bf{1}}_{\{X_{t} \in dy\}}\right]dt,
\end{equation}
where $p,q > 0$ and $X$ is a general L\'evy process but not a compound Poisson process.
For some L\'evy process $X$, this quantity $\mathbb E_x\left[e^{-p\int_{0}^{e(q)}\rm{\bf{1}}_{\{X_s \leq b\}}ds}
\rm{\bf{1}}_{\{X_{e(q)} \in dy\}}\right]$ has more explicit expressions. And in the following,
we will give some examples.

\renewcommand{\theExample}{6.\arabic{Example}}

\begin{Example}
Let $X$ be a hyper-exponential jump diffusion process, i.e., the process $X$ is given by (4.1) but the jump distributions $p^+(z)$ and $p^-(z)$ are simplified as:
\[
p^+(z)=\sum_{k=1}^{m^+}c_k\eta_ke^{-\eta_k z},  \ \
p^-(z)=\sum_{k=1}^{n^-}d_{k}\vartheta_ke^{-\vartheta_k z}, \ \ z > 0,
\]
where $c_k,\eta_k,\vartheta_k, d_k>0$ and $\sum_{k=1}^{m^+}c_k=1=\sum_{k=1}^{n^-}d_k$.

In this case, the equation $\psi(z)=q$ for any $q>0$ only has real and simple roots, where $\psi(z)=\ln\left(\mathbb E\left[e^{z X_1}\right]\right)$ (see Lemma 2.1 in [4]). The distributions of $\overline{X}_{e(q)}$ and $\underline{X}_{e(q)}$ for any $q>0$ have semi-explicit expressions, whose forms are the same as (A.2) and (A.5). In addition, the function $F_1(x)$ defined by (3.5) has the same form as $F_0(x)$ given by (4.12), and $\hat{F}_2(x)$ in (3.28) has a similar expression. Thus the left-hand side of (3.31) can be written in a more explicit form.

In fact, after some simple calculations, we obtain that formula (3.31) will reduce to the results given by Corollary 3.9 in [18].

\qed
\end{Example}

\begin{Example}
Assume that $X$ is a spectrally negative L\'evy process. First, we give some results on such a L\'evy process $X$ and refer the reader to chapter $8$ of [10] for more details.

It is known that for $\lambda >0$,
\[
\psi(\lambda)=\ln (\mathbb E\left[e^{\lambda X_1}\right])
=\frac{1}{2}\sigma^2\lambda^2+ \gamma \lambda+\int_{-\infty}^{0}(e^{\lambda x}-1-\lambda x\textbf{1}_{\{x>-1\}})\Pi(dx),
\]
where $\gamma \in \mathbb R$ and $\sigma \geq 0$; the L\'evy measure $\Pi$ has a support of $(-\infty,0)$ such that $\int_{-\infty}^{0}(x^2\wedge 1)\Pi(dx)< \infty$.
Besides, if $\sigma=0$ and $\int_{-1}^{0}|x|\Pi(dx)<\infty$, then $X$ has bounded variation and
$
\psi(\lambda)= d \lambda +\int_{-\infty}^{0}(e^{\lambda x}-1)\Pi(dx)
$
with  $d=\gamma - \int_{-1}^{0} x\Pi(dx)>0$.

Define
\begin{equation}
\Phi(q)=\sup\{\lambda \geq 0: \psi(\lambda)=q\},\ \ \ for \ \ q>0.
\end{equation}
For given $q>0$, the q-scale function $W^{(q)}(x)$ is strictly increasing and continuous on $(0,\infty)$ and its Laplace transform satisfies
\begin{equation}
\int_{0}^{\infty}e^{-s x}W^{(q)}(x)dx=\frac{1}{\psi(s)-q}, \ \ for \ \ s >\Phi(q).
\end{equation}
In addition, $W^{(q)}(x)=0$ for $x < 0$ and $W^{(q)}(0):=\lim_{x \downarrow 0}W^{(q)}(x)$.

Next, we derive formulas for $F_1(x)$, $\hat{F}_2(x)$ and $K_q(x)$ given respectively by (3.5), (3.28) and (3.4).

From (8.2) in [10], we know
\begin{equation}
\mathbb E\left[e^{-s\overline{X}_{e(q)}}\right]=\frac{\Phi(q)}{\Phi(q)+s} \ \ and \ \ \mathbb E\left[e^{s\underline{X}_{e(q)}}\right]=\frac{q}{\Phi(q)}\frac{\Phi(q)-s}{q-\psi(s)},  \ \ s,q > 0.
\end{equation}

(i) It follows directly from (3.13) and (6.4) that $F_1(0)=\frac{\Phi(q)}{\Phi(p+q)}-1$ and
\begin{equation}
F_1(dx)=\frac{\Phi(p+q)-\Phi(q)}{\Phi(p+q)}\Phi(q)e^{-\Phi(q)x}dx, \ \ x >0.
\end{equation}

(ii) Note that (see (3.28))
\[
\int_{0}^{\infty}e^{-s x} \hat{F}_2(dx)+\hat{F}_2(0)+1=\mathbb E\left[e^{s\underline{X}_{e(p+q)}}\right]/
\mathbb E\left[e^{s\underline{X}_{e(q)}}\right].
\]
So formula (6.4) gives $\hat{F}_2(0)=\frac{(p+q)\Phi(q)}{q\Phi(p+q)}-1$. Then, for $s>\max\{\Phi(p+q),\Phi(q)\}$, some straightforward calculations will yield
\[
\begin{split}
&\int_{0}^{\infty}e^{-s x} \hat{F}_2(dx)=\mathbb E\left[e^{s\underline{X}_{e(p+q)}}\right]/
\mathbb E\left[e^{s\underline{X}_{e(q)}}\right]-\frac{(p+q)\Phi(q)}{q\Phi(p+q)}\\
&=\frac{(p+q)\Phi(q)(\Phi(q)-\Phi(p+q))}{q\Phi(p+q)(s-\Phi(q))}+
\frac{(p+q)p\Phi(q)}{q\Phi(p+q)}\frac{1+\frac{\Phi(q)-\Phi(p+q)}
{s-\Phi(q)}}{\psi(s)-(p+q)}.
\end{split}
\]
The above result and formula (6.3) will lead to
\begin{equation}
\begin{split}
\frac{\hat{F}_2(dx)}{dx}
=\frac{(p+q)\Phi(q)(\Phi(q)-\Phi(p+q))}{q\Phi(p+q)}e^{\Phi(q)x}+
\frac{(p+q)p\Phi(q)}{q\Phi(p+q)}\hat{f}_2(x), \ \  x>0,
\end{split}
\end{equation}
where
\begin{equation}
\hat{f}_2(x)=W^{(p+q)}(x)+(\Phi(q)-\Phi(p+q))\int_{0}^{x}e^{\Phi(q)(x-z)}W^{(p+q)}(z)dz, \ \ x \in \mathbb R.
\end{equation}

(iii) Since $\mathbb P\left(\overline{X}_{e(p+q)} \in dx\right)=\Phi(p+q)e^{-\Phi(p+q)x}dx$ for $x>0$ (see (6.4)) and
(see, e.g., formula (8.20) on page 219 of [10])
\[
\mathbb P\left(-\underline{X}_{e(q)} \in dx\right)=\frac{q}{\Phi(q)}W^{(q)}(dx)-qW^{(q)}(x)dx, \ \ x \geq 0.
\]
From (3.4), we can rewrite $K_q(dx)$ as
\[
\Phi(p+q)\left\{\int_{-\infty}^{0^+}e^{-\Phi(p+q)(x-z)}
\mathbb P\left(\underline{X}_{e(q)} \in dz\right)
-\int_{x}^{0^+}e^{-\Phi(p+q)(x-z)}
\mathbb P\left(\underline{X}_{e(q)} \in dz\right)\right\}.
\]
Integration by parts shows that
\[
\int_{x}^{0^+}e^{-\Phi(p+q)(x-z)}W^{(q)}(-dz)=W^{(q)}(-x)+\Phi(p+q)\int_{x}^{0}e^{-\Phi(p+q)(x-z)}W^{(q)}(-z)dz.
\]
From (6.4) and the last three formulas, we will derive the following result after some simple computations.
\begin{equation}
\begin{split}
\frac{K_q(dx)}{dx}
=\frac{q}{p}\Phi(p+q)\left(\frac{\Phi(p+q)}{\Phi(q)}-1\right)e^{-\Phi(p+q)x}-\frac{q\Phi(p+q)}{\Phi(q)}k_q(x),
\ \ x \in \mathbb R,
\end{split}
\end{equation}
where
\begin{equation}
k_q(x)=W^{(q)}(-x)+\left(\Phi(p+q)-\Phi(q)\right)e^{-\Phi(p+q)x}\int_{x}^{0}e^{\Phi(p+q)z}
W^{(q)}(-z)dz.
\end{equation}

In addition,
the definition of $\hat{L}_q(x)$ (see Corollary 3.2 (ii)) gives
\[\hat{L}_q(dx)=K_q(-dx), \ \  for \ \ x \in \mathbb R,\]
which can be obtained from (6.8).

Finally, as $\hat{F}_2(0)=\frac{(p+q)\Phi(q)}{q\Phi(p+q)}-1$ and $F_1(0)=\frac{\Phi(q)}{\Phi(p+q)}-1$,
we can rewrite formula (3.33) as follows:
\begin{equation}
\begin{split}
&\mathbb E_x\left[e^{-p\int_{0}^{e(q)}\rm{\bf{1}}_{\{X_s \leq b\}}ds}
\rm{\bf{1}}_{\{X_{e(q)} \in dy\}}\right]\\
&=\left\{\begin{array}{cc}
\frac{\Phi(q)}{\Phi(p+q)}K_q(dy-x)+\int_{b-x}^{y-x}F_1(dy-x-z)dK_{q}(z),& y \geq b,\\
\frac{q}{p+q}\left(\frac{(p+q)\Phi(q)}{q\Phi(p+q)}\hat{L}_q(x-dy)+\int_{x-b}^{x-y}
\hat{F}_2(x-dy-z)d \hat{L}_{q}(z)\right),& y \leq  b,
\end{array}\right.
\end{split}
\end{equation}
which combined with (6.5), (6.6) and (6.8), leads to (see Appendix B for the details on the derivation)
\begin{equation}
\begin{split}
&\mathbb E_x\left[e^{-p \int_{0}^{e(q)}\rm{\bf{1}}_{\{X_s<b\}}ds}\rm{\bf{1}}_{\{
X_{e(q)}\in dy\}}\right]=-qW_{x-b}^{(q,p)}(x-y)dy\\
&+\frac{q}{p}(\Phi(p+q)-\Phi(q))H^{(p+q,-p)}(x-b)H^{(q,p)}(b-y)dy,
\end{split}
\end{equation}
where
\begin{equation}
\begin{split}
&H^{(p+q,-p)}(x-b)=e^{\Phi(p+q)(x-b)}\left[1-p\int_{0}^{x-b}e^{-\Phi(p+q)z}W^{(q)}(z)dz\right],\\
&H^{(q,p)}(b-y)=e^{\Phi(q)(b-y)}\left[1+p\int_{0}^{b-y}e^{-\Phi(q)z}W^{(p+q)}(z)dz\right],
\end{split}
\end{equation}
and
\begin{equation}
\begin{split}
W_{x-b}^{(q,p)}(x-y)
=W^{(q)}(x-y)+p\int_{x-b}^{x-y}W^{(p+q)}(x-y-z)W^{(q)}(z)dz.
\end{split}
\end{equation}

Therefore, formula (6.11) recovers the result obtained in previous research, see (19) in [11] or (12) in [19].
\end{Example}

\section{Conclusion}
\label{sec:6}
In this paper, we investigate the occupation times of a general L\'evy process. Formulas for
the Laplace transform of the joint distribution of an arbitrary L\'evy process (which is not
a compound Poisson process) and its occupation times are derived. The approach used is novel
and the result has some applications in finance. Particularly, the application of our result
to price occupation time derivatives is a potential direction of our future research.

\begin{appendix}

\section{The proof of Proposition 4.1}

In this section, we derive Proposition 4.1 and present some preliminary results before
starting the derivation.

The following Lemma A.1 gives the distributions of $\overline{X}_{e(q)}$ and
$\underline{X}_{e(q)}$ for any $q \in \mathbb Q$, where Lemma A.1 (i) is taken from
Theorem 2.2 and Corollary 2.1 in  Lewis and Mordecki [14]; and Lemma A.1 (ii) is a
straightforward application of Lemma A.1 (i) to the dual process $-X$.

\begin{Lemma}
(i) For any $q \in \mathbb Q$ and $Re(s) \geq 0$,
\begin{equation}
\begin{split}
\mathbb E\left[e^{-s \overline{X}_{e(q)}}\right]
&=\prod_{k=1}^{m^+}\left(\frac{s+\eta_k}{\eta_k}\right)^{m_k}
\prod_{k=1}^{M}\left(\frac{\beta_{k,q}}{s+\beta_{k,q}}\right)
=\sum_{k=1}^{M}\frac{C^q_k}{s+\beta_{k,q}}:=\psi_q^+(s),
\end{split}\tag{A.1}
\end{equation}
and for $z \geq 0$,
\begin{equation}
\mathbb P\left(\overline{X}_{e(q)}\in dz\right)=\sum_{k=1}^{M}C^q_{k}e^{-\beta_{k,q}z}dz,\tag{A.2}
\end{equation}
where
\begin{equation}
\frac{C^q_i}{\beta_{i,q}}=\prod_{k=1}^{m^+}\left(\frac{\eta_k-\beta_{i,q}}{\eta_k}\right)^{m_k}
\prod_{k=1,k \neq i}^{M}\frac{\beta_{k,q}}{\beta_{k,q}-\beta_{i,q}},\ \ for \ \  1\leq i \leq M.\tag{A.3}
\end{equation}

(ii) For any $q \in \mathbb Q$ and $Re(s) \geq 0$,
\begin{equation}
\begin{split}
\mathbb E\left[e^{s\underline{X}_{e(q)}}\right]
&=\prod_{k=1}^{n^-}\left(\frac{s+\vartheta_k}{\vartheta_k}\right)^{n_k}
\prod_{k=1}^{N}\left(\frac{\gamma_{k,q}}{s+\gamma_{k,q}}\right)
=\sum_{k=1}^{N}\frac{D^q_{k}}{s+\gamma_{k,q}}:=\psi_q^-(s),
\end{split}\tag{A.4}
\end{equation}
and for $z \leq 0$,
\begin{equation}
\mathbb P\left(\underline{X}_{e(q)}\in dz\right)=\sum_{k=1}^{N}
D^q_{k}e^{\gamma_{k,q}z}dz,\tag{A.5}
\end{equation}
where
\begin{equation} \frac{D^q_j}{\gamma_{j,q}}=\prod_{k=1}^{n^-}\left(\frac{\vartheta_k-\gamma_{j,q}}
{\vartheta_k}\right)^{n_k}
\prod_{k=1,k \neq j}^{N}\left(\frac{\gamma_{k,q}}{\gamma_{k,q}-\gamma_{j,q}}\right),
\ \ for \ 1\leq j \leq N.\tag{A.6}
\end{equation}
\end{Lemma}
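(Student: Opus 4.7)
The plan is to derive Lemma A.1 by combining the Wiener--Hopf factorization of $X_{e(q)}$ with the explicit rational structure that (4.1)--(4.4) force on the Laplace exponent. For part (i), I would follow the Lewis--Mordecki route. The starting point is the identity
$$\mathbb{E}\bigl[e^{-sX_{e(q)}}\bigr]\;=\;\mathbb{E}\bigl[e^{-s\overline{X}_{e(q)}}\bigr]\,\mathbb{E}\bigl[e^{-s\underline{X}_{e(q)}}\bigr]\;=\;\frac{q}{q-\phi(-s)},$$
where $\phi(\lambda):=\log\mathbb{E}[e^{\lambda X_1}]$ is the Laplace exponent of $X_1$ and the first equality is the Wiener--Hopf factorization applied to the independent supremum/infimum decomposition of $X$ sampled at the exponential time.

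Using (4.2)--(4.4), $\phi(-s)$ is rational in $s$ with poles of order $m_k$ at $s=-\eta_k$ and of order $n_k$ at $s=\vartheta_k$, plus a quadratic polynomial part coming from the diffusion. I would clear denominators to write
$$q-\phi(-s)\;=\;\frac{R(s)}{\prod_k(s+\eta_k)^{m_k}\prod_k(\vartheta_k-s)^{n_k}},$$
with $R(s)$ a polynomial whose degree $M+N$ is dictated by the dominant $\sigma^{2}s^{2}/2$ term at infinity. The zeros of $R$ are exactly the solutions of $\phi(\lambda)=q$: by Lemma 4.1 and the indexing chosen in Lemma A.1, these are the $M$ simple points $s=-\beta_{k,q}$ in $\mathrm{Re}(s)<0$ and the $N$ simple points $s=\gamma_{k,q}$ in $\mathrm{Re}(s)>0$. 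Matching leading coefficients pins $R$ down up to a scalar.

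The structural step is to invoke uniqueness of the Wiener--Hopf splitting: $\mathbb{E}[e^{-s\overline{X}_{e(q)}}]$ is bounded and analytic in $\mathrm{Re}(s)>0$ and equals $1$ at $s=0$, and symmetrically for the infimum factor in the opposite half-plane. Applied to the now-explicit rational expression for $q/(q-\phi(-s))$, these constraints force the supremum factor to inherit precisely the numerator zeros $(s+\eta_k)^{m_k}$ (which sit in $\mathrm{Re}(s)<0$) and the reciprocal factors $1/(s+\beta_{k,q})$, while the infimum factor absorbs $(\vartheta_k-s)^{n_k}$ and $1/(\gamma_{k,q}-s)$. Normalizing each at $s=0$ supplies the multiplicative constants $\prod\beta_{k,q}/\prod\eta_k^{m_k}$ and $\prod\gamma_{k,q}/\prod\vartheta_k^{n_k}$, yielding (A.1) and the analogous formula for $\mathbb{E}[e^{s\underline{X}_{e(q)}}]$ in one stroke. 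The density (A.2) and the coefficient (A.3) are then extracted by a routine partial-fraction expansion---simple because the $\beta_{k,q}$ are distinct for $q\in\mathbb{Q}$---followed by term-by-term Laplace inversion, with $C_{k}^{q}/\beta_{k,q}$ read off as a residue and rewritten via the Lagrange identity.

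Part (ii) falls out of part (i) by duality: apply (i) to $X':=-X$, which again has the form (4.1) with $(\eta_k,m_k,c_{kj},\lambda^{+})$ and $(\vartheta_k,n_k,d_{kj},\lambda^{-})$ swapped and drift negated, then use $\overline{X'}_{e(q)}=-\underline{X}_{e(q)}$ and translate roots ($\beta_{k,q}$ of $X'$ become $\gamma_{k,q}$ of $X$). The only delicate point in the whole scheme is uniqueness of the factorization assignment---one must exclude alternative analytic splittings---but this follows from a standard Liouville-type argument on the ratio of any two candidate plus-factors, so the main labour is careful bookkeeping of poles and zeros across the imaginary axis rather than any genuinely hard analysis.
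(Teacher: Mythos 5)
Your proposal is correct and is essentially the argument behind the result the paper relies on: the paper does not prove Lemma A.1 itself but imports part (i) from Theorem 2.2 and Corollary 2.1 of Lewis and Mordecki [14] --- whose proof is precisely the Wiener--Hopf factorization of $q/(q-\phi(-s))$ as a rational function, the assignment of zeros/poles by half-plane, and a Liouville-type uniqueness argument, as you sketch --- and obtains part (ii) by the same duality ($X\mapsto -X$, $\overline{X'}_{e(q)}=-\underline{X}_{e(q)}$) that you use. The delicate point you flag (uniqueness of the analytic splitting) is handled in [14] exactly along the lines you indicate, so there is no gap to report.
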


Although (A.1) and (A.4) hold for $Re(s) \geq 0$ only,  $\psi_q^+(s)$ in (A.1) and
$\psi_q^-(s)$ in (A.4) are treated  as two rational functions of $s$ in what follows.
In addition, for any $a \in \mathbb R$, define
\begin{equation}
\tau_{a}^{+} := \inf\{t\geq 0: X_t > a\}\ \ and \ \
\tau_{a}^{-}:= \inf\{t\geq 0: X_t < a \}.\tag{A.7}
\end{equation}

Lemma A.2 summarizes the results on the one-sided exit problems of $X$, and its proof
is very easy by applying Lemma A.1 and the following two results (see, Corollary 2 and
formula (4) in Alili and Kyprianou [1])
\[
\mathbb E\left[e^{-q \tau_{x}^-+s(X_{\tau_{x}^-}-x)}\right]=\frac{\mathbb E\left[
e^{s(\underline{X}_{e(q)}-x)}\textbf{1}_{\{\underline{X}_{e(q)}< x \}}\right]}{\mathbb E\left[
e^{s\underline{X}_{e(q)}}\right]}, \ \ s \geq 0 \ \  and  \ \ x \leq 0,
\]
and
\[
\mathbb E\left[e^{-q \tau_{x}^+-s (X_{\tau_{x}^+}-x)}\right]=\frac{\mathbb E\left[
e^{-s(\overline{X}_{e(q)}-x)}\textbf{1}_{\{\overline{X}_{e(q)} > x \}}\right]}{\mathbb E\left[
e^{-s\overline{X}_{e(q)}}\right]}, \ \ x, s \geq 0.
\]

\begin{Lemma}
(1) For $q \in  \mathbb Q$ and $x, y \leq 0$, we have

\begin{equation}
\begin{split}
\mathbb E\left[e^{-q\tau_{x}^-}\textbf{1}_{\{X_{\tau_{x}^-}-x \in dy\}}\right]=D^q_0(x)\delta_0(dy)+
\sum_{k=1}^{n^-}\sum_{j=1}^{n_k}D^q_{kj}(x)\frac{(\vartheta_k)^j(-y)^{j-1}}{(j-1)!}e^{\vartheta_k y}dy,
\end{split}\tag{A.8}
\end{equation}
with $D^q_0(x)$ and $D^q_{kj}(x)$ given by rational expansion:
\begin{equation}
\begin{split}
&D^q_0(x)+\sum_{k=1}^{n^-}\sum_{j=1}^{n_k}D^q_{kj}(x)\left(\frac{\vartheta_k}{\vartheta_k+ s}\right)^j
=
\frac{1}{\psi_q^-(s)}\sum_{k=1}^{N}D^q_{k}\frac{e^{\gamma_{k,q} x}}{s+\gamma_{k,q}},  \  x\leq 0,
\end{split}\tag{A.9}
\end{equation}
where $\psi_q^-(s)$ is a rational function and is given by (A.4).

(2) For $q \in \mathbb Q$ and $x, y \geq  0$,
\begin{equation}
\begin{split}
&\mathbb E\left[e^{-q \tau_{x}^+}\textbf{1}_{\{X_{\tau_{x}^+}-x \in dy\}}\right]=
C^q_0(x)\delta_0(dy)+
\sum_{k=1}^{m^+}\sum_{j=1}^{m_k}C^q_{kj}(x)\frac{(\eta_k)^jy^{j-1}}{(j-1)!}e^{-\eta_k y}dy,
\end{split}\tag{A.10}
\end{equation}
with $C^q_0(x)$ and $C^q_{kj}(x)$ given by rational expansion:
\begin{equation}
\begin{split}
&C^q_0(x)+\sum_{k=1}^{m^+}\sum_{j=1}^{m_k}C^q_{kj}(x)\left(\frac{\eta_k}{\eta_k+ s}\right)^j
=\frac{1}{\psi_q^+(s)}\sum_{k=1}^{M}C^q_{k}\frac{e^{-\beta_{k,q} x}}{s+\beta_{k,q}}, \ \ x\geq 0,
\end{split}\tag{A.11}
\end{equation}
where $\psi_q^+(s)$ is a rational function and is given by (A.1).
\end{Lemma}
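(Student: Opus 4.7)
The plan is to derive part (1) of Lemma A.2 by taking Laplace transforms on both sides in the overshoot variable, and then recovering the measure-theoretic identity by inverting the Laplace transform. Part (2) will then follow by applying part (1) to the dual process $-X$, or equivalently by the symmetric argument starting from the upper-exit version of the Alili--Kyprianou formula.

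For part (1), I would start from the Alili--Kyprianou identity recalled just before the lemma,
\[
\mathbb{E}\!\left[e^{-q \tau_{x}^-+s(X_{\tau_{x}^-}-x)}\right]=\frac{\mathbb{E}\!\left[e^{s(\underline{X}_{e(q)}-x)}\textbf{1}_{\{\underline{X}_{e(q)}< x \}}\right]}{\mathbb{E}\!\left[e^{s\underline{X}_{e(q)}}\right]},
\]
valid for $s\ge 0$ and $x\le 0$, and plug in the explicit formulas from Lemma A.1(ii). The denominator is exactly $\psi_q^-(s)$ as given by (A.4). Using the density (A.5), a direct calculation of the numerator yields
\[
\int_{-\infty}^{x}e^{s(z-x)}\sum_{k=1}^{N}D^q_{k}e^{\gamma_{k,q}z}dz \;=\; \sum_{k=1}^{N}\frac{D^q_{k}\,e^{\gamma_{k,q}x}}{s+\gamma_{k,q}},
\]
so the ratio equals precisely the right-hand side of (A.9). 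This identifies the Laplace transform $\mathbb{E}\!\left[e^{-q\tau_{x}^-}e^{s(X_{\tau_{x}^-}-x)}\right]$, in the overshoot variable $u=-(X_{\tau_{x}^-}-x)\ge 0$, with an explicit rational function of $s$.

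Next, I would show that this rational function admits precisely the partial-fraction form written as the LHS of (A.9). The key degree check is this: multiplication by $1/\psi_q^-(s)$ cancels the $N$ simple poles of $\sum_{k}D^q_{k}e^{\gamma_{k,q}x}/(s+\gamma_{k,q})$ at $s=-\gamma_{k,q}$, leaving only poles at $s=-\vartheta_{k}$ of orders up to $n_k$; moreover, as $|s|\to\infty$ both numerator and denominator behave like $1/s$, so the ratio tends to the finite limit $\sum_{k}D^q_{k}e^{\gamma_{k,q}x}$. Hence the standard partial-fraction decomposition produces a constant term $D^q_{0}(x)$ together with a proper rational function expressible as $\sum_{k,j}D^q_{kj}(x)\,\bigl(\vartheta_k/(\vartheta_k+s)\bigr)^{j}$, which is exactly the form on the LHS of (A.9), with the coefficients $D^q_{0}(x),D^q_{kj}(x)$ uniquely determined by matching.

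Finally, I would invert the Laplace transform term by term to recover (A.8): the constant $D^q_{0}(x)$ inverts to the atom $D^q_{0}(x)\delta_0(dy)$, capturing the creeping contribution at the level (which exists because $\sigma>0$); and each $\bigl(\vartheta_k/(\vartheta_k+s)\bigr)^{j}$ is the Laplace transform of the $\mathrm{Gamma}(j,\vartheta_k)$ density in $u=-y\ge 0$, giving the term $(\vartheta_k)^{j}(-y)^{j-1}/(j-1)!\cdot e^{\vartheta_k y}$ on $y\le 0$. Part (2) is then obtained by applying the dual Alili--Kyprianou identity together with Lemma A.1(i) in an entirely parallel computation. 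The one step that requires a little care, and that I would regard as the main obstacle, is the degree and pole-cancellation analysis that justifies writing the ratio in the specific partial-fraction form (A.9)--(A.11); once that structural fact is established, the passage to the measure (A.8) (respectively (A.10)) is essentially just Laplace inversion of the rational function.
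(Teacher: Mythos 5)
Your proposal is correct and takes essentially the same route the paper indicates for Lemma A.2: apply the Alili--Kyprianou identities together with Lemma A.1, recognize the resulting expression as a rational function of $s$ whose poles after cancellation lie only at $-\vartheta_k$ (resp.\ $-\eta_k$), and recover (A.8)/(A.10) by partial fractions and uniqueness of Laplace transforms, with part (2) by duality. One cosmetic slip only: the limit of the ratio as $|s|\to\infty$ is $\bigl(\sum_{k}D^q_{k}e^{\gamma_{k,q}x}\bigr)/\bigl(\sum_{k}D^q_{k}\bigr)$ rather than $\sum_{k}D^q_{k}e^{\gamma_{k,q}x}$, which does not affect the structural argument since only finiteness of the limit is needed.
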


\begin{Remark}
From (A.9), we see that $D^q_0(x)$ and $D^q_{kj}(x)$, for $1\leq k\leq n^-$ and
$1\leq j \leq n_k$, are linear combinations of $e^{\gamma_{i,q} x}$ for $1\leq i \leq N$.
Similarly, formula (A.11) leads to that $C^q_0(x)$ and $C^q_{kj}(x)$, for $1\leq k\leq m^+$
and $1\leq j \leq m_k$, are linear combinations of $e^{\beta_{i,q} x}$ for  $1 \leq i \leq M$.
\end{Remark}

\begin{Lemma}
For any $\theta >0$ and $s \neq -\eta_1, \ldots, -\eta_{m^+}$ with $\theta \neq s$,
\begin{equation}
\begin{split}
&\int_{0}^{\infty}e^{-\theta x}C^q_0(x)dx+\sum_{k=1}^{m^+}\sum_{j=1}^{m_k} \int_{0}^{\infty}
e^{-\theta x}C^q_{kj}(x)dx\left(\frac{\eta_k}{\eta_k+s}\right)^j=\frac{1}{s-\theta}
\left(\frac{\psi_q^+(\theta)}
{\psi_q^+(s)}-1\right),
\end{split}\tag{A.12}
\end{equation}
and for any $\theta >0$ and $s \neq -\vartheta_1, \ldots, -\vartheta_{n^-}$ with $\theta \neq s$,
\begin{equation}
\begin{split}
&\int_{-\infty}^{0}e^{\theta x}D^q_0(x)dx +\sum_{k=1}^{n^-}\sum_{j=1}^{n_k} \int_{-\infty}^{0}
e^{\theta x}D^q_{kj}(x)dx\left(\frac{\vartheta_k}{\vartheta_k+s}\right)^j=\frac{1}{s-\theta}
\left(\frac{\psi_q^-(\theta)}
{\psi_q^-(s)}-1\right).
\end{split}\tag{A.13}
\end{equation}
\end{Lemma}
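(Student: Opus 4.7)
The plan is to derive both identities by taking the Laplace transform (in $x$) of the rational expansion identities (A.11) and (A.9) already proved in Lemma A.2. The argument is essentially algebraic once the expansions are in hand.

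For (A.12), I would start from (A.11): multiply both sides by $e^{-\theta x}$ and integrate over $x \in [0,\infty)$. The left-hand side becomes exactly the left-hand side of (A.12) after interchanging integration and summation (justified because the sum on the right side of (A.11) involves only finitely many terms of the form $e^{-\beta_{k,q}x}$ with $\mathrm{Re}(\beta_{k,q})>0$, so each $C^q_{kj}(x)$ decays exponentially). The right-hand side becomes
\[
\frac{1}{\psi_q^+(s)} \sum_{k=1}^{M} \frac{C^q_{k}}{(s+\beta_{k,q})(\theta+\beta_{k,q})}.
\]

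The next step is the partial-fraction identity
\[
\frac{1}{(s+\beta_{k,q})(\theta+\beta_{k,q})} = \frac{1}{s-\theta}\left(\frac{1}{\theta+\beta_{k,q}} - \frac{1}{s+\beta_{k,q}}\right),
\]
which requires $\theta \neq s$ (exactly the hypothesis stated). Substituting and using the partial-fraction expression $\psi_q^+(z) = \sum_{k=1}^{M} \frac{C^q_k}{z+\beta_{k,q}}$ from (A.1), the sums over $k$ collapse to $\psi_q^+(\theta)$ and $\psi_q^+(s)$ respectively, yielding
\[
\frac{1}{\psi_q^+(s)} \cdot \frac{1}{s-\theta}\bigl(\psi_q^+(\theta) - \psi_q^+(s)\bigr) = \frac{1}{s-\theta}\left(\frac{\psi_q^+(\theta)}{\psi_q^+(s)} - 1\right),
\]
which is exactly the right-hand side of (A.12). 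The exclusion $s \neq -\eta_k$ is needed so that the factors $(\eta_k/(\eta_k+s))^j$ on the left-hand side make sense.

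For (A.13), the argument is symmetric. I would start from (A.9), multiply by $e^{\theta x}$ and integrate over $x \in (-\infty,0]$, then apply the same partial-fraction identity with $\gamma_{k,q}$ in place of $\beta_{k,q}$ and use the expansion $\psi_q^-(z) = \sum_{k=1}^{N} \frac{D^q_k}{z + \gamma_{k,q}}$ from (A.4). The only step that demands any care is the interchange of sum and integral and the convergence of $\int_{-\infty}^{0} e^{\theta x} D^q_{kj}(x)\,dx$; this is guaranteed because the right-hand side of (A.9) is a finite combination of $e^{\gamma_{k,q}x}$ with $\mathrm{Re}(\gamma_{k,q})>0$, so each coefficient function grows at most like $e^{\gamma_{k,q}x}$ as $x\to-\infty$, and $e^{\theta x}$ with $\theta>0$ dominates this decay. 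Apart from this routine verification, the proof is a direct algebraic manipulation of the identities already established, and I do not anticipate any real obstacle.
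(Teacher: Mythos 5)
Your proposal is correct and follows exactly the paper's argument: multiply (A.11) (resp.\ (A.9)) by $e^{-\theta x}$ (resp.\ $e^{\theta x}$), integrate, and use the computation $\int_{0}^{\infty}e^{-\theta x}\frac{e^{-\beta_{k,q}x}}{s+\beta_{k,q}}dx=\frac{1}{s-\theta}\left(\frac{1}{\theta+\beta_{k,q}}-\frac{1}{s+\beta_{k,q}}\right)$ together with the partial-fraction form of $\psi_q^{\pm}$ in (A.1) and (A.4), which is precisely the hint the paper gives. Your added remarks on convergence and interchange of sum and integral are fine (the finitely many exponentials with $Re(\beta_{k,q}),Re(\gamma_{k,q})>0$ make this immediate), so there is no gap.
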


\begin{proof}
These results can be obtained from (A.9) and (A.11) after some direct algebraic manipulations.
 Here, we only remind that
\[
\int_{0}^{\infty}e^{-\theta x}\frac{ e^{-\beta_{k,q} x}}{s+\beta_{k,q}}dx=\frac{1}{s-\theta}
\left(\frac{1}{\theta+\beta_{k,q}}-\frac{1}{s+\beta_{k,q}}\right).
\]
\qed
\end{proof}

\textbf{Proof of Proposition 4.1} The derivation consists of three steps.

Step 1. For given $y > b$, considering  the  function defined in (3.1), we have
\begin{equation}
\begin{split}
V_q(x)
&= \mathbb E_x\left[e^{-p\int_{0}^{e(q)}\textbf{1}_{\{X_s \leq b\}}ds}
\textbf{1}_{\{X_{e(q)} > y\}}\right]\\
&\leq \left\{\begin{array}{cc}
1,& if \ \ p\geq 0,\\
 \mathbb E_x\left[e^{-pe(q)}\right]=\frac{q}{p+q}, & if -q < p < 0.
\end{array}\right.
\end{split}\tag{A.14}
\end{equation}

For $x<b$,   we can obtain from the strong Markov property of the process $X$ and the lack
of memory property of $e(q)$ that
\begin{equation}
\begin{split}
V_q(x)
&=\mathbb E_x\left[e^{-p\int_{0}^{e(q)}\textbf{1}_{\{X_s \leq b\}}ds}
\textbf{1}_{\{X_{e(q)} > y\}}\textbf{1}_{\{e(q) > \tau_b^+\}}\right]\\
&=\mathbb E_x\left[e^{-p\tau_b^+-p\int_{\tau_b^+}^{e(q)}\textbf{1}_{\{X_s \leq b\}}ds}
\textbf{1}_{\{X_{e(q)} > y\}}\textbf{1}_{\{e(q) > \tau_b^+\}}\right]\\
&=\mathbb E_x\left[e^{-(p+q)\tau_b^+}V_q(X_{\tau^+_b})\right]\\
&=\sum_{k=1}^{m^+}\sum_{j=1}^{m_k}C^{\xi}_{kj}(b-x)\int_{0}^{\infty}
\frac{(\eta_k)^jz^{j-1}}{(j-1)!}e^{-\eta_k z}V_q(b+z)dz\\
&+ C^{\xi}_0(b-x)V_q(b)=\sum_{k=1}^{M}U_{k}e^{\beta_{k,\xi}(x-b)},  \ \ x < b,
\end{split}\tag{A.15}
\end{equation}
where $\xi = p+q$ and $U_1, \ldots, U_M$ are proper constants and do not depend on $x$;
the fourth equality follows from (A.10) and the final one is due to Remark A.1.

Similarly, for $x > b$,  we can derive
\begin{equation}
\begin{split}
V_q(x)
&=\mathbb E_x\left[e^{-p\int_{\tau_b^-}^{e(q)}\textbf{1}_{\{X_s \leq b\}}ds}
\textbf{1}_{\{X_{e(q)} > y\}}\textbf{1}_{\{e(q)> \tau_b^-\}}
\right]\\
&+\mathbb E_x\left[\textbf{1}_{\{X_{e(q)} > y\}}\textbf{1}_{\{e(q) \leq \tau_b^-\}}
\right]\\
&=\mathbb E_x\left[e^{-q \tau_{b}^{-}}V_q(X_{\tau_{b}^{-}})\right]+\mathbb P_x(X_{e(q)} > y,
 \underline{X}_{e(q)} \geq b)\\
&=\sum_{k=1}^{n^-}\sum_{j=1}^{n_k}D^q_{kj}(b-x)\int_{-\infty}^{0}V_q(b+z)
\frac{(\vartheta_k)^j(-z)^{j-1}}{(j-1)!}e^{\vartheta_kz}dz\\
&+D^q_0(b-x)V_q(b) +\mathbb P_x(X_{e(q)} > y, \underline{X}_{e(q)} \geq b),
\end{split}\tag{A.16}
\end{equation}
where the third equality follows from (A.8).

The well-known Wiener-Hopf factorization (see, e.g., Theorem 6.16 in [10]) gives
that $X_{e(q)}-\underline{X}_{e(q)}$ is independent of $\underline{X}_{e(q)}$ and
is equal in distribution to $\overline{X}_{e(q)}$ under $\mathbb P$. This result and formulas
 (A.2) and (A.5) yield
\begin{equation}
\begin{split}
\mathbb P_x(X_{e(q)}
& > y, \underline{X}_{e(q)} \geq b)
=
\int_{b-x}^{0}
\mathbb P(X_{e(q)}-\underline{X}_{e(q)} > y - x -z, \underline{X}_{e(q)}\in dz) \\
&=
\int_{b-x}^{0}\mathbb P(\overline{X}_{e(q)} > y - x -z)\mathbb P(\underline{X}_{e(q)}\in dz)\\
&=
\left\{\begin{array}{cc}
\sum_{k=1}^{M}H_{k}e^{\beta_{k,q}(x-y)}+\sum_{k=1}^{N}\hat{P}_{k}
e^{\gamma_{k,q}(b-x)}, & b <  x \leq y,\\
1+\sum_{k=1}^{N}Q_{k}e^{\gamma_{k,q}(y-x)}+\sum_{k=1}^{N}\hat{P}_{k}
e^{\gamma_{k,q}(b-x)}, & x \geq y,
\end{array}\right.
\end{split}\tag{A.17}
\end{equation}
where we remind the reader that $\mathbb P\left(\overline{X}_{e(q)}> z\right)=1$ for $z\leq 0$;
for $ k=1,2,\ldots, M,$
\begin{equation}
\begin{split}
H_k=\frac{C^q_k}{\beta_{k,q}}\sum_{j=1}^{N}\frac{D^q_j}{\beta_{k,q}+\gamma_{j,q}},
\end{split}\tag{A.18}
\end{equation}
and for $k=1,2,\ldots, N$,
\begin{equation}
\begin{split}
Q_k=D^q_k\sum_{i=1}^{M}\frac{C^q_i}{\beta_{i,q}(\beta_{i,q}+\gamma_{k,q})}-\frac{D^q_k}
{\gamma_{k,q}}, \
\hat{P}_k=-\sum_{i=1}^{M}\frac{C^q_i}{\beta_{i,q}}
\frac{D^q_k e^{\beta_{i,q}(b-y)}}{\beta_{i,q}+\gamma_{k,q}}.
\end{split}\tag{A.19}
\end{equation}

From (A.16), (A.17) and Remark A.1, we arrive at
\begin{equation}
V_q(x)=\left\{
\begin{array}{cc}
\sum_{k=1}^{M}H_{k}e^{\beta_{k,q}(x-y)}+\sum_{k=1}^{N}P_{k}
e^{\gamma_{k,q}(b-x)}, & b < x \leq y,\\
1+\sum_{k=1}^{N}Q_{k}e^{\gamma_{k,q}(y-x)}+\sum_{k=1}^{N}P_{k}
e^{\gamma_{k,q}(b-x)}, & x \geq y,
\end{array}\right.\tag{A.20}
\end{equation}
where $P_1, \ldots, P_N$ do not depend on $x$ and satisfy
\begin{equation}
\begin{split}
&\sum_{k=1}^{N}P_ke^{\gamma_{k,q}(b-x)}=-\sum_{i=1}^{M}\frac{C^q_i}{\beta_{i,q}}
\sum_{j=1}^{N}\frac{D^q_j e^{\gamma_{j,q}(b-x)}}{\beta_{i,q}+\gamma_{j,q}}e^{\beta_{i,q}(b-y)}
+D^q_0(b-x)V_q(b)\\
&+\sum_{k=1}^{n^-}\sum_{j=1}^{n_k}D^q_{kj}(b-x)\int_{-\infty}^{0}V_q(b+z)
\frac{(\vartheta_k)^j(-z)^{j-1}}{(j-1)!}e^{\vartheta_kz}dz, \ \  x > b.
\end{split}\tag{A.21}
\end{equation}

Formulas (A.15) and (A.20) imply that the remaining thing is to derive the expressions of
$U_k$ and $P_k$. In the second step, we establish the equations satisfied by
$U_1, \ldots, U_M$ and $P_1, \ldots, P_N$; and in the last step, we solve these equations.

Step 2. Since $\sigma > 0$, it is known that $\mathbb P\left(\tau_0^+=0\right)
=\mathbb P\left(\tau_0^-=0\right)=1$. Thus $V_q(x)$ is continuous at $b$ (see (A.15)
and (A.16)), i.e., $\lim_{x\uparrow b} V_q(x)=V_q(b)=\lim_{x\downarrow b}V_q(x)$, which
combined with (A.15) and (A.20), yields
\begin{equation}
\sum_{i=1}^{M}U_i=V_q(b)=\sum_{i=1}^{M}H_ie^{\beta_{i,q}(b-y)}+\sum_{i=1}^{N}P_i.\tag{A.22}
\end{equation}
Besides, we know that the derivative of $V_q(x)$ at $b$ is continuous, i.e.,
$V_q^{\prime}(b-)=V_q^{\prime}(b+)$\footnote{From the proof given in the Appendix A of
Wu and Zhou [18], we obtain that $V_q^{\prime}(b-)$ and $V_q^{\prime}(b+)$ must be
equal if they are existent. The existence of $V_q^{\prime}(b-)$ and $V_q^{\prime}(b+)$ can
 be seen from (A.15) and (A.20).}. Then, it follows from  (A.15) and (A.20) that
\begin{equation}
\sum_{i=1}^{M}U_i\beta_{i,\xi}=\sum_{i=1}^{M}H_i\beta_{i,q}e^{\beta_{i,q}(x-y)}-
\sum_{i=1}^{N}P_i\gamma_{i,q}.\tag{A.23}
\end{equation}

For all $\theta \in \mathbb C $ except at $\beta_{1,q}, \ldots, \beta_{M,q}$ and
$-\gamma_{1,q}, \ldots, -\gamma_{N,q}$, formulas (A.18), (A.19) and some straightforward
 computations lead to
\begin{equation}
\begin{split}
&\sum_{k=1}^{M}\frac{\theta H_k}{\beta_{k,q}-\theta}+1+\sum_{k=1}^{N}\frac{\theta Q_k}
{\theta+\gamma_{k,q}}=\sum_{k=1}^{M}\frac{\theta H_k}{\beta_{k,q}-\theta}+
\sum_{i=1}^{M}\sum_{j=1}^{N}\frac{C^q_iD^q_j}{\beta_{i,q}
\gamma_{j,q}}\\
&+\sum_{k=1}^{N}\frac{\theta}{\theta+\gamma_{k,q}}\left(
D^q_k\sum_{i=1}^{M}\frac{C^q_i}{\beta_{i,q}(\beta_{i,q}+\gamma_{k,q})}-\frac{D^q_k}
{\gamma_{k,q}} \sum_{i=1}^{M}\frac{C^q_i}{\beta_{i,q}}\right) \\
&=\sum_{i=1}^{M}\sum_{j=1}^{N}\left\{\frac{\theta C^q_iD^q_j}{\beta_{i,q}(\beta_{i,q}+\gamma_{j,q})
(\beta_{i,q}-\theta)}+\frac{C^q_iD^q_j}{\beta_{i,q}\gamma_{j,q}}-\frac{\theta C^q_iD^q_j}
{\gamma_{j,q}(\beta_{i,q}+\gamma_{j,q})
(\gamma_{j,q}+\theta)}\right\}\\
&=\sum_{i=1}^{M}\sum_{j=1}^{N}\frac{C^q_iD^q_j}{(\beta_{i,q}-\theta)
(\theta+\gamma_{j,q})}=\psi_q^+(-\theta)\psi_q^-(\theta),
\end{split}\tag{A.24}
\end{equation}
where we have used
$\sum_{i=1}^{M}\frac{C^q_i}{\beta_{i,q}}=1=\sum_{j=1}^{N}\frac{D^q_j}{\gamma_{j,q}}$
(let $s=0$ in (A.1) and (A.4)) in the first equality; the last equality follows from (A.1)
and (A.4).

For $1\leq k \leq m^+$ and $1\leq j \leq m_k$, we can write
\[
(-1)^{j-1}\int_{b_1}^{b_2}z^{j-1}e^{-\eta_k z} e^{\theta z} dz
=\frac{\partial^{j-1}}{\partial \eta^{j-1}}\left(
\int_{b_1}^{b_2}e^{-\eta z} e^{\theta z} dz\right)_{\eta=\eta_k},
\]
providing the integral $\int_{b_1}^{b_2}e^{-\eta z} e^{\theta z}dz$ exists. So we derive
via (A.20) that
\begin{equation}
\begin{split}
\int_{0}^{\infty}\frac{(\eta_k)^jz^{j-1}}{(j-1)!}e^{-\eta_k z}V_q(b+z)dz=
\sum_{i=1}^{N}\frac{P_i(\eta_k)^j}{(\eta_k+\gamma_{i,q})^j}+\sum_{i=1}^{M}\frac{
H_i(\eta_k)^j}{(\eta_k-\beta_{i,q})^j}e^{\beta_{i,q}(b-y)},
\end{split}\tag{A.25}
\end{equation}
where we have used the following result:
\begin{equation}
\frac{(\eta_k)^j(-1)^{j-1}}{(j-1)!}\frac{\partial^{j-1}}{\partial \eta^{j-1}}\left(
\frac{1}{\eta}e^{\eta(b-y)}\Big(\sum_{i=1}^{M}\frac{H_i\eta}{\beta_{i,q}-\eta}+
\sum_{i=1}^{N}\frac{\eta Q_i}{\eta+\gamma_{i,q}}+1\Big)\right)_{\eta=\eta_k}=0,\tag{A.26}
\end{equation}
which can be proved by using (A.24) and noting that
\begin{equation}
\frac{\partial^{j-1}}{\partial \eta^{j-1}}\left(\psi_q^+(-\eta)\right)_{\eta=\eta_k}=0, \ \
for \ \ 1\leq k \leq m^+ \ \ and \ \ 1\leq j\leq m_k.\tag{A.27}
\end{equation}

Then, it follows from (A.15), (A.22) and (A.25) that
\begin{equation}
\begin{split}
&\sum_{k=1}^{M}U_{k}e^{\beta_{k,\xi}(x-b)}= C^{\xi}_0(b-x)\left(\sum_{i=1}^{M}
H_ie^{\beta_{i,q}(b-y)}+\sum_{i=1}^{N}P_i\right)\\
&+\sum_{k=1}^{m^+}\sum_{j=1}^{m_k}C^{\xi}_{kj}(b-x)\left(
\sum_{i=1}^{N}\frac{P_i(\eta_k)^j}{(\eta_k+\gamma_{i,q})^j}+\sum_{i=1}^{M}\frac{
H_i(\eta_k)^j}{(\eta_k-\beta_{i,q})^j}e^{\beta_{i,q}(b-y)}\right).
\end{split}\tag{A.28}
\end{equation}
Multiplying both sides of (A.28) by $e^{\theta (x-b)}$ and taking an integration from
$-\infty$ to $b$ with respect to $x$, we obtain from (A.12) that
\begin{equation}
\begin{split}
\sum_{i=1}^{M}\frac{U_i}{\beta_{i,\xi}+\theta}
=&\sum_{i=1}^{M}\frac{H_ie^{\beta_{i,q}(b-y)}}{\theta+\beta_{i,q}}\left(1-\frac{\psi_{\xi}^+(\theta)}
{\psi_{\xi}^+(-\beta_{i,q})}\right)\\
&+\sum_{i=1}^{N}\frac{P_i}{\gamma_{i,q}-\theta}\left(\frac{\psi_{\xi}^+(\theta)}
{\psi_{\xi}^+(\gamma_{i,q})}-1\right).
\end{split}\tag{A.29}
\end{equation}
Since both sides of (A.29) are rational functions of $\theta$, it can be extended to the
whole plane except at $-\beta_{1,\xi}, \ldots, -\beta_{M,\xi}$. Note that
\[
\lim_{\theta \rightarrow -\beta_{i,q}} \frac{\psi_{\xi}^+(-\beta_{i,q})-\psi_{\xi}^+(\theta)}
{\theta+\beta_{i,q}}=-\psi_{\xi}^{+\prime}(-\beta_{i,q}),\
\lim_{\theta \rightarrow \gamma_{i,q}} \frac{\psi_{\xi}^+(\theta)-\psi_{\xi}^+(\gamma_{i,q})}
{\theta -\gamma_{i,q}}=\psi_{\xi}^{+\prime}(\gamma_{i,q}).
\]

Similarly, for $1\leq k \leq n^-$ and $1\leq j \leq n_k$, we can derive from (A.15) that
\begin{equation}
\int_{-\infty}^{0}V_q(b+z)\frac{(\vartheta_k)^j(-z)^{j-1}}{(j-1)!}e^{\vartheta_k z}dz
=\sum_{i=1}^{M}\frac{U_i(\vartheta_k)^j}{(\vartheta_k+\beta_{i,\xi})^j}.\tag{A.30}
\end{equation}
From (A.13), (A.21), (A.30) and the fact of $V_q(b)=\sum_{i=1}^{M}U_i$ (see (A.22)), it can
be shown that
\begin{equation}
\begin{split}
&\sum_{i=1}^{N}\frac{P_i}{\theta+\gamma_{i,q}}=\sum_{i=1}^{N}P_i\int_{b}^{\infty}e^{\theta(b-x)}
e^{\gamma_{i,q}(b-x)}dx\\
&=-\sum_{i=1}^{M}
\sum_{j=1}^{N}\frac{D^q_j}{\beta_{i,q}+\gamma_{j,q}}\frac{e^{\beta_{i,q}(b-y)}}{\theta+\gamma_{j,q}}
\frac{C^q_i}{\beta_{i,q}}+
\sum_{i=1}^{M}\frac{U_i}{\beta_{i,\xi}-\theta}\left(\frac{\psi_q^-(\theta)}
{\psi_q^-(\beta_{i,\xi})}-1\right).
\end{split}\tag{A.31}
\end{equation}
Furthermore, it holds that
\begin{equation}
\begin{split}
&-\sum_{i=1}^{M}
\sum_{j=1}^{N}\frac{D^q_j}{\beta_{i,q}+\gamma_{j,q}}\frac{e^{\beta_{i,q}(b-y)}}{\theta+\gamma_{j,q}}
\frac{C^q_i}{\beta_{i,q}}\\
&=-\sum_{i=1}^{M}\frac{C^q_i}{\beta_{i,q}}e^{\beta_{i,q}(b-y)}\frac{1}{\beta_{i,q}-\theta}
\sum_{j=1}^{N}D^q_j\left(\frac{1}{\theta+\gamma_{j,q}}-\frac{1}{\beta_{i,q}+\gamma_{j,q}}\right)\\
&=\sum_{i=1}^{M}\frac{H_i}{\beta_{i,q}-\theta}e^{\beta_{i,q}(b-y)}-\sum_{i=1}^{M}
\frac{C^q_i\psi_q^-(\theta)}{\beta_{i,q}(\beta_{i,q}-\theta)}e^{\beta_{i,q}(b-y)}.
\end{split}\tag{A.32}
\end{equation}
where the second equality follows from (A.18) and (A.4). Hence,
\begin{equation}
\begin{split}
&\sum_{i=1}^{N}\frac{P_i}{\theta+\gamma_{i,q}}=
\sum_{i=1}^{M}\frac{U_i}{\beta_{i,\xi}-\theta}\left(\frac{\psi_q^-(\theta)}
{\psi_q^-(\beta_{i,\xi})}-1\right)\\
&+\sum_{i=1}^{M}\frac{H_i}{\beta_{i,q}-\theta}e^{\beta_{i,q}(b-y)}-\sum_{i=1}^{M}
\frac{C^q_i\psi_q^-(\theta)}{\beta_{i,q}(\beta_{i,q}-\theta)}e^{\beta_{i,q}(b-y)},
\end{split}\tag{A.33}
\end{equation}
which holds for $\theta \in \mathbb C$ except at $-\gamma_{1,q}, \ldots, -\gamma_{N,q}$.

Therefore, for any given $1 \leq k \leq m^+$ and $0\leq j \leq m_k-1$, taking a derivative
on both sides of (A.29) with respect to $\theta $ up to $j$ order and letting $\theta$ equal
 to $-\eta_k$ will produce
\begin{equation}
\sum_{i=1}^{M}\frac{U_i(-1)^{j}}{(\beta_{i,\xi}-\eta_k)^{j+1}}+\sum_{i=1}^{M}\frac{P_i}{(\eta_k+
\gamma_{i,q})^{j+1}}
-\sum_{i=1}^{M}\frac{H_i(-1)^j}{(\beta_{i,q}-\eta_k)^{j+1}}e^{\beta_{i,q}(b-y)}=0,\tag{A.34}
\end{equation}
where the fact that $\frac{\partial ^{j}}{\partial \theta^{j}}
\Big(\psi_{\xi}^+(\theta)\Big)_{\theta=-\eta_k}=0$ (see (A.1)) is used in the derivation.
In a similar way, for any given $1 \leq k \leq n^-$ and $0\leq j \leq n_k-1$, applying
$\frac{\partial ^{j}}{\partial \theta^{j}}\Big(\psi_q^-(\theta)\Big)_{\theta=-\vartheta_k}=0$
(see (A.4)) to (A.33) yields
\begin{equation}
\sum_{i=1}^{M}\frac{U_i(-1)^{j}}{(\beta_{i,\xi}+\theta_k)^{j+1}}+\sum_{i=1}^{M}\frac{P_i}
{(\gamma_{i,q}-\theta_k)^{j+1}}
-\sum_{i=1}^{M}\frac{H_i(-1)^j}{(\beta_{i,q}+\theta_k)^{j+1}}e^{\beta_{i,q}(b-y)}=0.\tag{A.35}
\end{equation}

Step 3.
Consider the following rational function of $x$:
\begin{equation}
f(x)=\sum_{i=1}^{M}\frac{U_i}{x-\beta_{i,\xi}}-
\sum_{i=1}^{N}\frac{P_i}{x+\gamma_{i,q}}-\sum_{i=1}^{M}\frac{H_i}
{x-\beta_{i,q}}e^{\beta_{i,q}(b-y)}.\tag{A.36}
\end{equation}
For fixed $1 \leq k \leq m^+$ and $0\leq j \leq m_k-1$, (A.34) yields that
$\frac{\partial^j}{\partial x^j}\left(f(x)\right)_{x=\eta_k}=0$, which means that $\eta_k$ is
a root of the equation $f(x)=0$ with multiplicity $m_k$. Besides, from (A.35), for
$1 \leq k \leq n^-$, we obtain that $-\vartheta_k$ is a $n_k$-multiplicity root of $f(x)=0$.
These results give us (recall $M=\sum_{k=1}^{m^+}m_k+1$ and $N=\sum_{k=1}^{n^-} n_k+1$; see Lemma 4.1)
\begin{equation}
f(x)=\frac{\prod_{k=1}^{m^+}(x-\eta_k)^{m_k}\prod_{k}^{n^-}(x+\vartheta_k)^{n_k}
(l_0+l_1x+\cdots+l_{M+1}x^{M+1})}
{\prod_{i=1}^{M}(x-\beta_{i,\xi})\prod_{i=1}^{N}(x+\gamma_{i,q})\prod_{i=1}^{M}
(x-\beta_{i,q})},\tag{A.37}
\end{equation}
with some proper constants $l_0,l_1, \ldots, l_{M+1}$.

Formulas (A.36) and (A.37) produce
\[
l_{M+1}=\sum_{i=1}^{M}U_i-
\sum_{i=1}^{N}P_i-\sum_{i=1}^{M}H_ie^{\beta_{i,q}(b-y)},
\]
and
\[
l_{M}=\sum_{i=1}^{M}U_i\left(Sum+\beta_{i,\xi}\right)-
\sum_{i=1}^{N}P_i\left(Sum-\gamma_{i,q}\right)-\sum_{i=1}^{M}H_ie^{\beta_{i,q}(b-y)}
\left(Sum+\beta_{i,q}\right),
\]
where $Sum=-\sum_{i=1}^{M}\beta_{i,\xi}+
\sum_{i=1}^{N}\gamma_{i,q}-\sum_{i=1}^{M}\beta_{i,q}$. So formulas (A.22) and (A.23) lead
 to that $l_{M+1}=0$ and $l_{M}=0$. In addition, it is obvious that (see (A.36))
\begin{equation}
\lim_{x\rightarrow \beta_{i,q}}f(x)(x-\beta_{i,q})=-H_ie^{\beta_{i,q}(b-y)},
 \ \ 1 \leq i \leq M,\tag{A.38}
\end{equation}
thus
\begin{equation}
\begin{split}
&f(x)=\frac{\prod_{k=1}^{m^+}(x-\eta_k)^{m_k}\prod_{k=1}^{n^-}(x+\vartheta_k)^{n_k}}
{\prod_{i=1}^{M}(x-\beta_{i,\xi})\prod_{i=1}^{N}(x+\gamma_{i,q})}\times\\
&\sum_{k=1}^{M}
\frac{\prod_{i=1}^{M}(\beta_{k,q}-\beta_{i,\xi})\prod_{i=1}^{N}(\beta_{k,q}+\gamma_{i,q})}
{\prod_{i=1}^{m^+}(\beta_{k,q}-\eta_i)^{m_i}\prod_{i=1}^{n^-}(\beta_{k,q}+\vartheta_i)^{n_i}}
\frac{-H_k}{x-\beta_{k,q}}e^{\beta_{k,q}(b-y)}.
\end{split}\tag{A.39}
\end{equation}
Formulas (4.7) and (4.8) are derived from (A.15), (A.20), (A.36) and (A.39).
\qed

\section{The derivation of (6.11)}
In this section, we give the details on the derivation of (6.11), and we will divide the arguments into two cases.

\textbf{Case 1}: Assume $y\geq b$.

In this case, we have (see (6.12) and (6.13))
\[
W_{x-b}^{(q,p)}(x-y)=
W^{(q)}(x-y)\ \ and \ \
H^{(q,p)}(b-y)=e^{\Phi(q)(b-y)}.
\]
For any given $x<0$, exchanging the order of integration produces
\begin{equation}
\int_{x}^{0}e^{-\Phi(q)(x-z)}k_q(z)dz
=e^{-\Phi(p+q)x}\int_{x}^{0}e^{\Phi(p+q)z}W^{(q)}(-z)dz.
\end{equation}

Since $k_q(z)=W^{(q)}(-z)=0$ if $z>0$. So identity (B.1) holds for all $x \in \mathbb R$.
In addition, for any given $x \in \mathbb R$, we have
\[
\begin{split}
\int_{b-x}^{y-x}e^{-\Phi(q)(y-x-z)}k_q(z)dz
=
&e^{\Phi(q)(b-y)}\int_{b-x}^{0}e^{-\Phi(q)(b-x-z)}k_q(z)dz\\
&-
\int_{y-x}^{0}e^{-\Phi(q)(y-x-z)}k_q(z)dz.
\end{split}
\]

When $y\geq b$, applying (6.5), (6.8), (6.10) and the above three formulas, we can deduce (6.11) after some simple and straightforward computations.

\textbf{Case 2}: Assume $y\leq b$.

As $\hat{L}_q(dx)=K_q(-dx)$, it follows from (6.8) that
\[
\begin{split}
\frac{\hat{L}_q(dx)}{dx}=
\frac{q}{p}\Phi(p+q)\left(\frac{\Phi(p+q)}{\Phi(q)}-1\right)e^{\Phi(p+q)x}
-\frac{q\Phi(p+q)}{\Phi(q)}k_q(-x),
\end{split}
\]
where $k_q(-x)$ is given by (6.9).

Formula (B.1) can be rewritten as
\[
\int_{0}^{x}e^{\Phi(q)(x-z)}k_q(-z)dz=e^{\Phi(p+q)x}\int_{0}^{x}e^{-\Phi(p+q)z}W^{(q)}(z)dz, \ \
 for \ \ all \ \ x \in \mathbb R.
\]
Besides, for $x \in \mathbb R$, it is obvious that
\[
\begin{split}
\int_{x-b}^{x-y}e^{\Phi(q)(x-y-z)}k_q(-z)dz
&=\int_{0}^{x-y}e^{\Phi(q)(x-y-z)}k_q(-z)dz\\
&-
e^{\Phi(q)(b-y)}\int_{0}^{x-b}e^{\Phi(q)(x-b-z)}k_q(-z)dz.
\end{split}
\]
Similar to the derivation of (B.1), from (6.7), we can obtain
\[
\int_{0}^{x}e^{\Phi(p+q)(x-z)}\hat{f}_2(z)dz=e^{\Phi(q)x}\int_{0}^{x}e^{-\Phi(q)z}W^{(p+q)}(z)dz,
\]
which holds for all $x\in \mathbb R$ (note that $\hat{f}_2(z)=W^{(p+q)}(z)=0$ for $z<0$).

Finally, for all $x \in \mathbb R$, we will prove the following result in Lemma B.1.
\begin{equation}
\begin{split}
&\int_{x-b}^{x-y}\hat{f}_2(x-y-z)k_q(-z)dz=\int_{x-b}^{x-y}W^{(p+q)}(x-y-z)W^{(q)}(z)dz\\
&+(\Phi(p+q)-\Phi(q))\int_{0}^{x-b}e^{\Phi(p+q)(x-b-z)}W^{(q)}(z)dz
\int_{0}^{b-y}e^{\Phi(q)(b-y-z)}W^{(p+q)}(z)dz.
\end{split}
\end{equation}

For given $y\leq b$, applying the above results, (6.6) and (6.10) will derive (6.11). This derivation only involves some straightforward calculations, thus  we omit the details for brevity.

\begin{Lemma}
For given $y \leq b$, formula (B.2) holds for all $x \in \mathbb R$.
\end{Lemma}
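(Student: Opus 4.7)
My plan is to reduce (B.2) to a clean integration-by-parts identity by rewriting both $\hat{f}_2$ and $k_q(-\cdot)$ in terms of the two auxiliary functions
\[
A(a)=\int_{0}^{a}e^{\Phi(p+q)(a-v)}W^{(q)}(v)dv,\qquad
B(w)=\int_{0}^{w}e^{\Phi(q)(w-u)}W^{(p+q)}(u)du,
\]
which are precisely the boundary objects appearing on the right-hand side of (B.2). From (6.7) one has $\hat{f}_{2}(w)=W^{(p+q)}(w)+(\Phi(q)-\Phi(p+q))B(w)$, and the change of variable $v=-u$ inside (6.9) yields $k_{q}(-z)=W^{(q)}(z)+(\Phi(p+q)-\Phi(q))A(z)$ (both expressions understood to vanish for negative argument, consistently with the support of $W^{(q)}$ and $W^{(p+q)}$).

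Setting $a=x-b$, $c=x-y$ and $\kappa=\Phi(p+q)-\Phi(q)$, I would expand the product $\hat{f}_{2}(c-z)k_{q}(-z)$ and integrate over $z\in[a,c]$. One of the four resulting terms is exactly $\int_{a}^{c}W^{(p+q)}(c-z)W^{(q)}(z)dz$, i.e.\ the first summand on the right of (B.2). The remaining three combine into
\[
\kappa\Bigl[\int_{a}^{c}W^{(p+q)}(c-z)A(z)dz-\int_{a}^{c}B(c-z)W^{(q)}(z)dz-\kappa\int_{a}^{c}B(c-z)A(z)dz\Bigr].
\]

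Differentiating the definitions of $A$ and $B$ gives $A'(z)=\Phi(p+q)A(z)+W^{(q)}(z)$ and $B'(w)=\Phi(q)B(w)+W^{(p+q)}(w)$. Substituting $W^{(q)}=A'-\Phi(p+q)A$ and $W^{(p+q)}=B'-\Phi(q)B$ into the bracket above, the coefficient of $\int_{a}^{c}B(c-z)A(z)dz$ becomes $-\Phi(q)+\Phi(p+q)-\kappa=0$, and those quadratic-in-$\kappa$ contributions cancel, leaving the skew-symmetric expression $\int_{a}^{c}[B'(c-z)A(z)-B(c-z)A'(z)]dz$. The change of variable $u=c-z$ followed by integration by parts, using $B(0)=0$, collapses this to $B(c-a)A(a)$. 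Multiplying by $\kappa$ produces exactly $(\Phi(p+q)-\Phi(q))A(x-b)B(b-y)$, the second summand on the right of (B.2).

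The main technical point to watch is the validity of the differential identity $A'(z)=\Phi(p+q)A(z)+W^{(q)}(z)$ across $z=0$ (relevant when $a<0$): it is immediate since $A$ is absolutely continuous and $W^{(q)}$ vanishes on $(-\infty,0)$, so the integration by parts goes through without boundary issues. The case $a\le 0$ also serves as a quick sanity check: there $A(a)=0$ and both sides of (B.2) reduce to $\int_{0}^{c}W^{(p+q)}(c-z)W^{(q)}(z)dz$, consistent with the general formula. Finally, when $\Phi(p+q)=\Phi(q)$ the correction term is trivially absent, and the LHS reduces to the first summand directly.
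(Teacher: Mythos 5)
Your proof is correct, but it follows a genuinely different route from the paper's. The paper argues in two cases: for $x\le b$ it establishes the convolution identity (B.3) by computing Laplace transforms of both sides via (6.3), (6.7) and (6.9) and invoking uniqueness, and for $x>b$ it rearranges a triple integral as in (B.4) and finishes with unwritten "straightforward computations" (with Remarks B.1--B.2 giving an alternative transform proof of the shifted identity (B.5)). You instead work pointwise and uniformly in $x$: writing $k_q(-z)=W^{(q)}(z)+\kappa A(z)$ and $\hat{f}_2(w)=W^{(p+q)}(w)-\kappa B(w)$ with $\kappa=\Phi(p+q)-\Phi(q)$, expanding the product over $[x-b,x-y]$, and using the a.e.\ relations $A'=\Phi(p+q)A+W^{(q)}$, $B'=\Phi(q)B+W^{(p+q)}$, under which the coefficient of $\int B(c-z)A(z)\,dz$ vanishes and the remaining cross terms are the exact derivative of $-B(c-z)A(z)$; the boundary evaluation with $B(0)=0$ and $A\equiv 0$ on $(-\infty,0]$ then yields precisely $\kappa\,A(x-b)B(b-y)$, i.e.\ the second term of (B.2). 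This is more elementary and cleaner: no case split in $x$, no Laplace inversion or uniqueness argument, no interchange of a triple integral — only local absolute continuity of $A$ and $B$ (clear, since $W^{(q)}$ and $W^{(p+q)}$ are locally bounded and vanish on the negative half-line), and you correctly flag and dispose of the only delicate point, the validity of the derivative identities across $0$, which is an a.e.\ statement sufficient for the fundamental theorem of calculus. What the paper's transform route buys in exchange is the intermediate identities (B.3) and (B.5) in explicit form, which it states and reuses; your argument absorbs them invisibly into the integration-by-parts cancellation.
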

\begin{proof}
For $s>\max\{\Phi(p+q),\Phi(q)\}$, we can derive via (6.3), (6.7) and (6.9) that
\[
\int_{0}^{\infty}e^{-s x} \int_{0}^{x}\hat{f}_2(x-z)k_q(-z)dzdx=\frac{1}{\psi(s)-q}\frac{1}{\psi(s)-(p+q)},
\]
which leads to (note that $k_q(-z)=W^{(q)}(z)=0$ if $z<0$)
\begin{equation}
\int_{0}^{x}\hat{f}_2(x-z)k_q(-z)dz=\int_{0}^{x}W^{(q)}(z)W^{(p+q)}(x-z)dz, \ \ for \ \ all\ \  x \in \mathbb R.
\end{equation}
So formula (B.2) holds for $x \leq b$ (note that the second item on the right-hand side of (B.2) equals to zero if $x \leq b$).


If $x>b$, we note first that
\begin{equation}
\begin{split}
\int_{x-b}^{x-y}dz \int_{0}^{x-y-z}dt_1\int_{-z}^{0}
\hat{w}(\cdot)dt_2
&= \int_{0}^{b-y}dt_1\int_{b-x}^{0}d t_2 \int_{x-b}^{x-y-t_1} \hat{w}(\cdot)dz\\
&+ \int_{0}^{b-y}dt_1\int_{t_1+y-x}^{b-x}d t_2 \int_{-t_2}^{x-y-t_1}\hat{w}(\cdot)dz,
\end{split}
\end{equation}
where
\[
\hat{w}(\cdot)=e^{\Phi(q)(x-y-z-t_1)}W^{(p+q)}(t_1)e^{\Phi(p+q)(z+t_2)}W^{(q)}(-t_2).
\]
Then, from (6.7), (6.9) and (B.4), we can show that (B.2) also holds for $x>b$ after some straightforward computations.
\qed
\end{proof}

\begin{Remark}
For any given $c\geq 0$ and $x>0$, the following identity holds
\begin{equation}
\begin{split}
&\int_{0}^{x}\hat{f}_2(x+c-z)k_q(-z)dz=\int_{0}^{x}W^{(p+q)}(x+c-z)W^{(q)}(z)dz\\
&-(\Phi(p+q)-\Phi(q))\int_{0}^{x}e^{\Phi(p+q)(x-z)}W^{(q)}(z)dz
\int_{0}^{c}e^{\Phi(q)(c-z)}W^{(p+q)}(z)dz,
\end{split}
\end{equation}
since both sides have the same Laplace transform.

In fact, for $s>\max\{\Phi(p+q),\Phi(q)\}$, exchanging the order of integration gives
\[
\begin{split}
&\int_{0}^{\infty}e^{-s x}\int_{0}^{x}\hat{f}_2(x+c-z)k_q(-z)dzdx
=
\int_{0}^{\infty}\int_{z}^{\infty}e^{-s x}\hat{f}_2(x+c-z)dxk_q(-z)dz\\
&=\int_{0}^{\infty}e^{-sz}k_q(-z)dz\left(e^{sc}\int_{0}^{\infty}e^{-sx}\hat{f}_2(x)dx-
\int_{0}^{c}e^{-s(x-c)}\hat{f}_2(x)dx\right),
\end{split}
\]
and
\[
\begin{split}
&\int_{0}^{\infty}e^{-s x}\int_{0}^{x}W^{(p+q)}(x+c-z)W^{(q)}(z)dzdx
\\
&=\int_{0}^{\infty}e^{-sz}W^{(q)}(z)dz\left(e^{sc}\int_{0}^{\infty}e^{-sx}W^{(p+q)}(x)dx-
\int_{0}^{c}e^{-s(x-c)}W^{(p+q)}(x)dx\right).
\end{split}
\]
Note that
\[
\int_{0}^{c}e^{-s(x-c)}\int_{0}^{x}e^{\Phi(q)(x-z)}W^{(p+q)}(z)dzdx=
\int_{0}^{c}\int_{z}^{c}e^{-s(x-c)}e^{\Phi(q)(x-z)}dxW^{(p+q)}(z)dz.
\]
Combining the above three formulas with (6.3), (6.7) and (6.9), we can deduce that both sides of (B.5) have the same Laplace transform.
\qed
\end{Remark}

\begin{Remark}
Note first that identity (B.5) holds also for $x \leq 0$ (since $k_q(-z)=W^{(q)}(z)=0$ if $z<0$).
For any given $x \in \mathbb R$ and $y \leq b$, we can write
\[
\begin{split}
\int_{x-b}^{x-y}\hat{f}_2(x-y-z)k_q(-z)dz
&=\int_{0}^{x-y}\hat{f}_2(x-y-z)k_q(-z)dz\\
&-
\int_{0}^{x-b}\hat{f}_2(x-b+(b-y)-z)k_q(-z)dz.
\end{split}
\]
Then formula (B.2) follows from (B.5).
\qed
\end{Remark}

\end{appendix}




\end{document}